\title{Quantum cluster algebras of type A and the dual canonical basis}
\author{Philipp Lampe}
\newcommand{\for}{{\rm for}\hspace{0.1cm}}
\newcommand{\ACM}{$\mathcal{A}(\mathcal{C}_M)$}
\theoremstyle{definition}
\newtheorem{theorem}{Theorem}[section]
\newtheorem{lemma}[theorem]{Lemma}
\newtheorem{proposition}[theorem]{Proposition}
\newtheorem{definition}[theorem]{Definition}
\newtheorem{corollary}[theorem]{Corollary}
\newtheorem{remark}[theorem]{Remark}
\newtheorem{example}[theorem]{Example}
\begin{document}

\maketitle

\begin{abstract}
The article concerns the subalgebra $U_v^+(w)$ of the quantized universal enveloping algebra of the complex Lie algebra $\mathfrak{sl}_{n+1}$ associated with a particular Weyl group element of length $2n$. We verify that $U_v^+(w)$ can be endowed with the structure of a quantum cluster algebra of type $A_n$. The quantum cluster algebra is a deformation of the ordinary cluster algebra Gei\ss -Leclerc-Schr\"{o}er attached to $w$ using the representation theory of the preprojective algebra. Furthermore, we prove that the quantum cluster variables are, up to a power of $v$, elements in the dual of Lusztig's canonical basis under Kashiwara's bilinear form. 
\end{abstract}

\section{Introduction} 
\label{intro}

{\it Cluster algebras} are commutative algebras created in 2000 by Fomin-Zelevinsky \cite{FZ:ClusterAlgI} in the hope to obtain a combinatorial description of the dual of Lusztig's {\it canonical basis} of a quantum group.

A cluster algebra comes with a distinguished set of generators called {\it cluster variables}. Each cluster variable belongs to several overlapping {\it clusters}. Every cluster, and hence every cluster variable, is obtained from an initial cluster by a sequence of {\it mutations}. Every mutation replaces an element in a cluster by an explicitly defined rational function in the variables of that cluster. We refer to Fomin-Zelevinsky \cite{FZ:ClusterAlgI} for definitions and to Fomin-Zelevinsky \cite{FZ:ClusterAlgIV} for a good survey about cluster algebras. 

It quickly turned out that Fomin-Zelevinsky's theory of cluster algebras has many interesting applications and coheres with various mathematical objects. Let us mention the representation theory of quivers and finite-dimensional algebras, the representation theory of preprojective algebras, root systems of Kac-Moody algebras, Calabi-Yau categories, quantum groups, and Lusztig's canonical basis of universal enveloping algebras.

A momentous step in the development was the {\it additive categorification} of {\it acyclic} cluster algebras by {\it cluster categories}. Cluster categories were defined by Buan-Marsh-Reineke-Reiten-Todorov \cite{BMRRT} and independently by Caldero-Chapoton-Schiffler \cite{CCS}  for type $A$. The cluster category $\mathcal{C}_Q$ associated with a quiver $Q$ is an orbit category of the bounded derived category of the category of representations of $Q$. Keller \cite{Keller:Overview} proved that cluster categories are triangulated categories. Key ingredients for the verification of the additive categorification of acyclic cluster algebras by cluster categories are due to Buan-Marsh-Reiten-Todorov \cite{BMRT}, Buan-Marsh-Reiten \cite{BMR1,BMR2}, Gei\ss -Leclerc-Schr\"oer \cite{GLS:Sem1,GLS:Sem2}, and Caldero-Keller \cite{CK2,CK1}. The process of mutation in the cluster algebra resembles the process of {\it tilting} in the cluster category. Hence, we obtain a link between quiver representations and triangulated categories on one side and a large class of cluster algebras on the other side.

Furthermore, Gei\ss -Leclerc-Schr\"oer \cite{GLS:Unipotent} provided an additive categorification by categories arising from the study of {\it Kac-Moody groups} and {\it unipotent cells}. In this construction the categorified cluster algebras are not necessarily acyclic. Let $\mathfrak{g}$ be the Kac-Moody Lie algebra attached to $Q$ and let $\mathfrak{g}=\mathfrak{n}_{-}\oplus \mathfrak{h} \oplus \mathfrak{n}$ be its triangular decomposition. Gei\ss -Leclerc-Schr\"oer's construction is related to the {\it preprojective algebra} $\Lambda$ associated with $Q$. Buan-Iyama-Reiten-Scott \cite{BIRS} attached to every element $w$ in the Weyl group of $\mathfrak{g}$ a subcategory $\mathcal{C}_w \subset {\rm mod}(\Lambda)$. Gei\ss -Leclerc-Schr\"oer \cite{GLS:Unipotent} endow the coordinate ring $\mathbb{C}[N(w)]$ of the unipotent group $N(w)$ with the structure of a cluster algebra $\mathcal{A}(w)$. Here, $N$ denotes the pro-unipotent pro-group associated with the completion $\widehat{\mathfrak{n}}$ and $N(w)=N\cap(w^{-1}N_{-}w)$. The coordinate ring $\mathbb{C}[N(w)]$ is naturally isomorphic to a subalgebra of the graded dual $U(\mathfrak{n})^*_{gr}$ of the universal enveloping algebra of $\mathfrak{n}$. The cluster variables are $\delta$-functions of rigid modules over the preprojective algebra. All cluster monomials lie in the dual semicanonical basis. 

Let us mention that cluster algebras also gained popularity in other branches of mathematics; for example, for Poisson geometry, see Gekhtman-Shapiro-Vainshtein \cite{GSV}, for Teichm\"{u}ller theory, see Fock-Goncharov \cite{FG}, for combinatorics, see Musiker-Propp \cite{MP}, for integrable systems, see Fomin-Zelevinsky \cite{FZ5}, for Donaldson-Thomas invariants and mathematical physics see Kontsevich-Soibelman \cite{KS}, etc. 

In this article we transfer to the quantized setup. We consider the case $\mathfrak{g}=\mathfrak{sl}_{n+1}$ (for some natural number $n$) and choose a particular Weyl group element $w$ of length $2n$ which is equal to the square of a Coxeter element. The reasons for this particular choice are the following three: First, in this case the stable category $\underline{\mathcal{C}_w}$ is triangle equivalent to the corresponding cluster category $\mathcal{C}_Q$ by a result of Gei\ss -Leclerc-Schr\"oer \cite[Theorem 11.1]{GLS:Unipotent}; second, there exist recursions for quantum cluster variables (simpler than the mutation relations) which allow to connect quantum cluster variables with canonical basis elements; third, cluster algebras of almost all types can be realized as $\mathcal{A}(w)$ for some square $w$ of a Coxeter element in the Weyl group of a Kac-Moody Lie algebra of the same type, see Gei\ss -Leclerc-Schr\"oer \cite[Section 2.6]{GLS:KacMoody}.  

We establish a quantum cluster algebra structure on a subalgebra $U^+_v(w)$ of the {\it quantized} universal enveloping algebra $U_v(\mathfrak{n})$ of $\mathfrak{n}$. Quantum cluster algebras were introduced by Berenstein-Zelevinsky \cite{BZ:QuantumClusterAlg}. The quivers corresponding to  $\mathfrak{g}=\mathfrak{sl}_{n+1}$ are {\it Dynkin} quivers of type $A_n$. We choose a particular orientation: let $Q=(Q_0,Q_1)$ be the Dynkin quiver of type $A_n$ with an alternating orientation beginning with a source. We denote the set of vertices by $Q_0=\left\lbrace1,2,\ldots,n\right\rbrace$. Figure \ref{fig:QuiverA} illustrates the example $n=13$. The choice of the orientation matches the choice of the Weyl group element $w$. The reduced expression of $w$ (that is used to construct $U^+_v(w)$) and its initial subsequences (that are used to construct the generators $U_v^+(w)$) are related to the indecomposable injective modules over the {\it path algebra} of $Q$ and their {\it Auslander-Reiten translates}, respectively.

\begin{figure}[h!]
\centering
\includegraphics{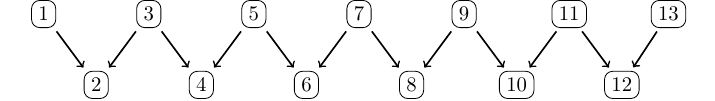}
\caption{The quiver $Q$ of type $A_{13}$} \label{fig:QuiverA}
\end{figure}

The construction of $U_v^+(w)$ is due to Lusztig \cite{Lusztig:QuantumGroups}. The algebra is generated by $2n$ elements that satisfy straightening relations; it degenerates to a commutative algebra in the classical limit $v=1$. The generators are constructed via Lusztig's $T$-automorphisms. The quantized universal enveloping algebra $U_v(\mathfrak{n})$ is a self-dual {\it Hopf algebra}. Also, the subalgebra $U_v^+(w)$ is isomorphic to its dual, i.e., it is isomorphic to the quantized coordinate ring $\mathbb{C}_v[N(w)]$. The algebra $U_v^+(w)$ possesses several distinguished bases, including a {\it Poincar\'e-Birkhoff-Witt basis} for every reduced expression for $w$, a {\it canonical basis}, and their duals. The article concerns the dual of Lusztig's canonical basis under Kashiwara's bilinear form \cite{Kashiwara}.

It is conjectured (see for example Kimura \cite{Kimura}) that the quantized coordinate rings $\mathbb{C}_v[N(w)]$ are {\it quantum cluster algebras} $\mathcal{A}_v(w)$ in general and that the set $\mathcal{M}_v$ of all quantum cluster monomials, taken up to powers of $v$, is a subset of the dual canonical basis $\mathcal{B}^*$, i.e., the following diagram commutes:
\begin{center}
\hspace{0cm}
\begin{xy}
\xymatrix{\mathcal{A}_v(w) \ar[r]  &  \mathbb{C}_v[N(w)] & \hspace{-1cm} \subset U_v(\mathfrak{n})_{gr}^{\ast}  \\
\mathcal{M}_v \ar@{^{(}->}[r] \ar@{^{(}->}[u] &  \mathcal{B}^* \ar@{^{(}->}[u] &}
\end{xy}
\end{center}
The conjecture has only been verified in very few cases, see Berenstein-Zelevinsky \cite{BZ:StringBases} for type $A_2$ and $A_3$, and the author \cite{Lampe} for an example of Kronecker type. 

We verify that in our case the integral form $U_v^+(w)_{\mathbb{Z}}$ of $U_v^+(w)$ is (after extending coefficients) a quantum cluster algebra. The proof relies on the exact form of the straightening relations. The description of the straightening relations features (besides Lusztig's $T$-automorphisms) Leclerc's embedding \cite{Leclerc:Shuffles} of $U_v(\mathfrak{n})$ in the {\it quantum shuffle algebra}. The exact form of the straightening relations enables us to verify that recursively defined variables satisfy a lattice property and an invariance property so that they are elements in the dual canonical basis.

The cluster algebra $\mathcal{A}(w)$, just as the quantum cluster algebra $\mathcal{A}_v(w)$, is of type $A_n$. Every cluster contains $n$ {\it frozen} and $n$ {\it mutable} cluster variables. Altogether there are $n+\frac{n(n+1)}{2}$ mutable and $n$ frozen cluster variables. Most of the cluster variables can be realized as minors of certain matrices, see Section \ref{DescriptionOfClusterVariables}. The structure of these minors implies that there is (besides the usual cluster exchange relation) a recursive way to compute these cluster variables avoiding denominators. Theorem \ref{QuantRecursionForClusterVar}, the main theorem, asserts that the recursion can be quantized to a recursion for the corresponding quantum cluster variables. The quantized recursions imply quantum exchange relations so that the integral form $U_v^+(w)_{\mathbb{Z}}$ of $U_v^+(w)$ becomes (after extending coefficients) a quantum cluster algebra.

Furthermore, it follows from our construction that the quantum cluster variables are (up to a power of $v$) elements in the dual of Lusztig's canonical basis under Kashiwara's bilinear form \cite{Kashiwara}.

\section{Representation theory of the quiver of type A and cluster algebras}
\label{RepTheo}
\subsection{The indecomposable modules over the path algebra}
\label{IndecMod}
Let $k$ be a field. In what follows we study the category ${\rm rep}_k(Q)$ of finite-dimensional $k$-representations of $Q$ over the field $k$. (For more detailed information on representations of quivers see for example Crawley-Boevey \cite{CB:RepTheory}.) The category ${\rm rep}_k(Q)$ is equivalent to the category ${\rm mod}(kQ)$ of finite-dimensional modules over the {\it path algebra} $kQ$. Gabriel's theorem \cite{Ga:Unzerlegbar} asserts that the quiver $Q$ admits (up to isomorphism) only finitely many indecomposable representations. In fact there are $\frac{(n+1)n}{2}$ indecomposable representations (up to isomorphism) and they are in bijection with the set of intervals $[i,j]=\left\lbrace i,i+1,i+2,\ldots j\right\rbrace \subset \mathbb{Z}$ with $1 \leq i \leq j \leq n$. The indecomposable representation associated with the interval $[i,j]$ is $V_{[i,j]}=\left((V_s)_{s \in Q_0}, (V_a)_{a \in Q_1} \right)$ defined by $k$-vector spaces $$V_s=\begin{cases} k, & {\rm if} \ i \leq s \leq j; \\ 0, & {\rm otherwise}; \end{cases}$$ associated with vertices $s$, and $k$-linear maps $$V_a=\begin{cases} 1, & {\rm if} \ V_s = V_t = k; \\ 0, & {\rm otherwise}; \end{cases}$$ associated with arrows $a \colon s \to t$. 

All further considerations will basically depend on the parity of $n$. For a compact and effective handling of all cases we make the assumption that $n$ is odd. Denote by $Q'=(Q_0',Q_1')$ to be the quiver obtained from $Q$ by removing the vertex $n$. The quiver $Q'$ is of type $A_{n-1}$, and the examination of both $Q'$ and $Q$ covers all cases. Note that every representation of $Q'$ can be viewed as a representation of $Q$ supported on the first $n-1$ vertices. An example of the quiver $Q'$ is shown in Figure \ref{fig:QuiverA'}.

\begin{figure}
\centering
\includegraphics{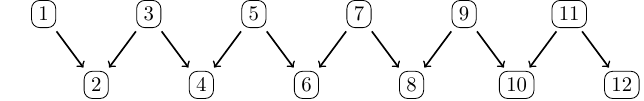}
\caption{The quiver $Q'$ of type $A_{12}$} \label{fig:QuiverA'}
\end{figure}

If $n=1$, i.e., the quiver has one vertex and no arrows, then the category ${\rm rep}_k(Q)$ can easily be described. In this case modules over the path algebra $kQ$ are $k$-vector spaces, and the $k$-vector space $k$ of dimension $1$ is the only indecomposable module. In the other cases, the most suggestive way to illustrate the category ${\rm rep}_k(Q)$ is given by its {\it Auslander-Reiten quiver}. For an introduction to Auslander-Reiten theory we refer to Assem-Simson-Skowronski \cite[Chapter IV]{ASS:Lectures}. 

The simplest non-trivial example is the Auslander-Reiten quiver of type $A_2$ which can be seen in Figure \ref{fig:ARA2}. In this case there are (up to isomorphism) three indecomposable representations, two of which are injective. The representations are displayed by numbers that represent basis vectors and composition series; cf. Gei\ss -Leclerc-Schr\"oer \cite[Section 7.5]{GLS:Unipotent}. The solid arrows represent {\it irreducible maps}; the dashed arrow represents the {\it Auslander-Reiten translation}. Note that the Auslander-Reiten translate of the injective representation associated with vertex $2$ is the zero representation.

\begin{figure}[h!]
\centering
\includegraphics{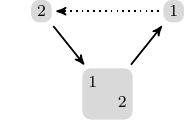}
\caption{The Auslander-Reiten quiver for $n=2$}\label{fig:ARA2}
\end{figure}

In what follows we are interested in the indecomposable injective $kQ$-modules $I_i$ associated with vertices $i \in Q_0$ and their Auslander-Reiten translates $\tau_{kQ}(I_i)$. Similarly, we are interested in the indecomposable injective $kQ'$-modules $I'_i$ associated with vertices $i \in Q'_0$ and their Auslander-Reiten translates $\tau_{kQ'}(I'_i)$. (For simplicity, we drop from now on the index attached to $\tau$ whenever it is clear which algebra we are referring to.) The choice of the alternating orientations of the quivers $Q$ and $Q'$ ensure that from type $A_3$ onwards we have $\tau(I) \neq 0$ for every indecomposable injective $kQ$-module. (This would not be true for the linear orientation of the Dynkin diagram $A_n$. The Auslander-Reiten translate of the indecomposable injective representation corresponding to the sink would be zero in this case.) The direct sum $M=\bigoplus_{i=1}^n I_i \oplus \tau(I_i)$ is a {\it terminal} $kQ$-module in the sense of Gei\ss -Leclerc-Schr\"{o}er \cite[Section 2.2]{GLS:Unipotent}, and so is the $kQ'$ module $M'=\bigoplus_{i=1}^{n-1} I'_i \oplus \tau(I'_i)$.

\begin{figure}[h!]
\centering
\includegraphics{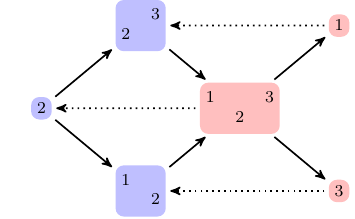}
\caption{The Auslander-Reiten quiver for $n=3$}\label{fig:ARQuiverA3}
\end{figure}

The small cases $A_3$ and $A_4$ will have to be treated separately. Figure \ref{fig:ARQuiverA3} and Figure \ref{fig:ARQuiverA4} display the indecomposable injective modules (red), their Auslander-Reiten translates (blue), and irreducible maps between them for the case $n=3$ and $n=4$, respectively. 

If $n=3$, then $M$ is the direct sum of all indecomposable $kQ$-modules, i.e., ${\rm mod}(kQ)={\rm add}(M)$.

\begin{figure}[h!]
\centering
\includegraphics{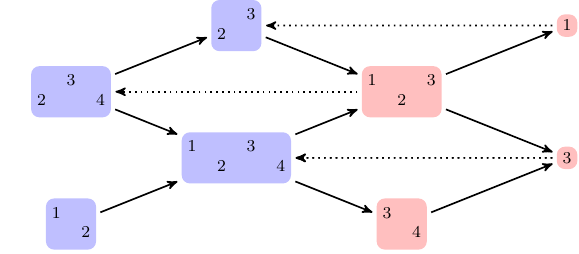}
\caption{A part of the Auslander-Reiten quiver for $n=4$}\label{fig:ARQuiverA4}
\end{figure}

From $A_5$ onwards a uniform description is possible. For type $A_n$ (remember that $n$ is assumed to be odd) the indecomposable components of $M$ can be written down explicitly: 
\begin{align*}
&I_i = V_{[i,i]}, &&{\rm if \ i \ is \ odd \ and } \ 1 \leq i \leq n, \\ 
&I_i = V_{[i-1,i+1]}, &&{\rm if \ i \ is \ even \ and} \ 2 \leq i \leq n,\\ 
&\tau(I_1) = V_{[2,3]}, && \\
&\tau(I_i) = V_{[i-2,i+2]}, &&{\rm if \ i \ is \ odd \ and} \ 3 \leq i \leq n-2,\\
&\tau(I_n) = V_{[n-2,n-1]}, && \\
&\tau(I_2) = V_{[2,5]}, && \\
&\tau(I_i) = V_{[i-3,i+3]}, &&{\rm if \ i \ is \ even \ and} \ 4 \leq i \leq n-3,\\
&\tau(I_{n-1}) = V_{[n-4,n-1]}. &&
\end{align*}
We display the relevant part of the Auslander-Reiten quiver of $A_n$ in Figure \ref{fig:AR} for the case $n=13$. As above, the indecomposable injective modules are colored red, their Auslander-Reiten translates blue.

\begin{figure}
\centering
\includegraphics{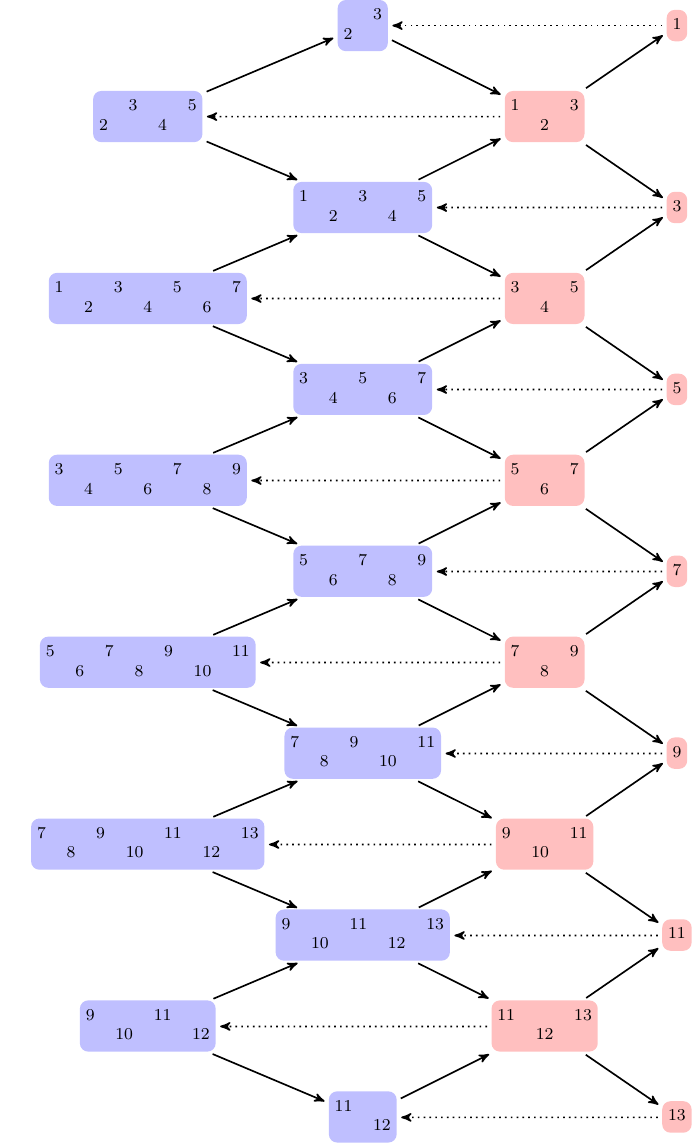}
\caption{A part of the Auslander-Reiten quiver of ${\rm mod}(kQ)$}\label{fig:AR}
\end{figure}

There are only a few changes if we restrict $Q$ to $Q'$. Observe that $I'_i=I_i$ for $i \in \left\lbrace 1,2,\ldots,n-3\right\rbrace$, and that $\tau(I'_i)=\tau(I_i)$ for $i \in \left\lbrace 1,2,\ldots,n-3\right\rbrace$. Note that the latter modules are $kQ$-modules supported on the first $n-1$ vertices and may therefore be viewed as $kQ'$-modules. Furthermore, we have 
\begin{align*}
&I'_{n-1}=V_{[n-2,n-1]},\\
&\tau(I'_{n-3})=V_{[n-6,n-1]},\\
&\tau(I'_{n-2})=V_{[n-4,n-1]},\\
&\tau(I'_{n-1})=V_{[n-4,n-3]}.
\end{align*}
An example of type $A_{12}$ is illustrated in Figure \ref{fig:AReven}.

\begin{figure}
\centering
\includegraphics{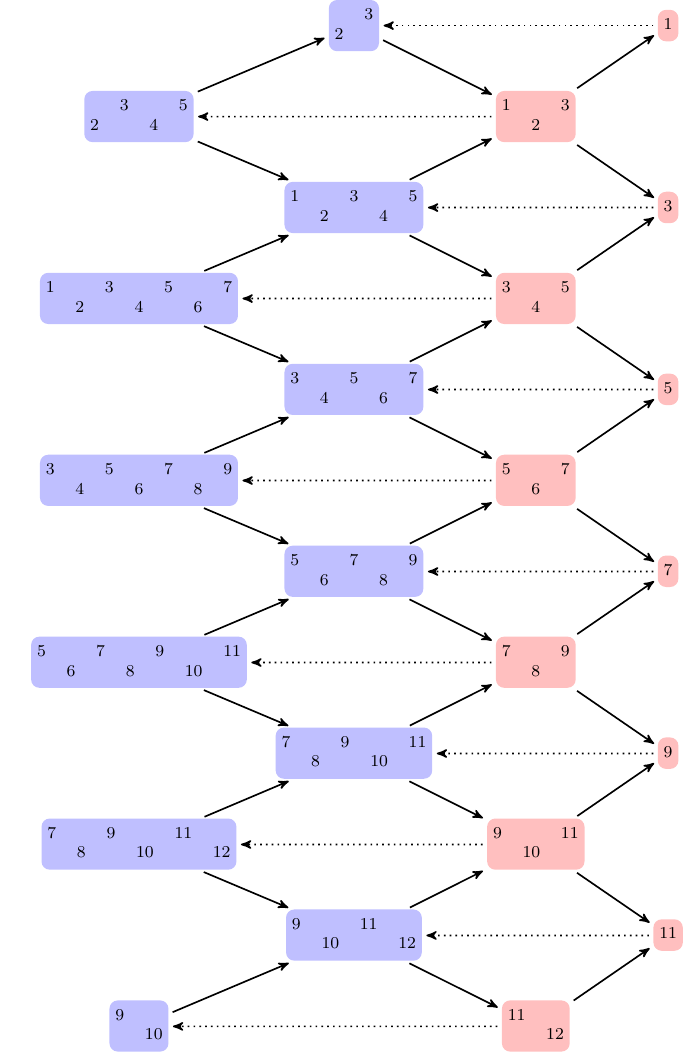}
\caption{A part of the Auslander-Reiten quiver of ${\rm mod}(kQ')$}\label{fig:AReven}
\end{figure}

\subsection{The preprojective algebra and rigid modules}
\label{PreprojAlgRigidMod}
The representation theory of the path algebra $kQ$ is closely related to the representation theory of the corresponding {\it preprojective algebra} $\Lambda$ defined as follows. For every arrow $a \colon s \to t$ in $Q_1$ introduce an additional arrow $a^{*} \colon t \to s$ in reverse direction and denote by $Q_1^{*}=\left\lbrace a^{*} \colon a \in Q_1\right\rbrace$ the set of all reversed arrows. The {\it double quiver} of $Q$ is defined to be the quiver $\overline{Q}=(\overline{Q}_0,\overline{Q}_1)$ given by a vertex set $\overline{Q}_0=Q_0$ and an arrow set $\overline{Q}_1=Q_1 \cup Q_1^{*}$. The preprojective algebra is defined to be $$\Lambda=k\overline{Q}/(c)$$  where the ideal $(c)$ is the two-sided ideal generated by the element $$c=\sum_{a \in Q_1}\left(a^{*}a-aa^{*} \right) \in k\overline{Q}.$$ The algebra $\Lambda$ is finite-dimensional, since $Q$ is an orientation of a Dynkin diagram, see Reiten \cite[Theorem 2.2a]{Reiten:Dynkin}. The category ${\rm mod}(\Lambda)$ of finite-dimensional $\Lambda$-modules is equivalent to the category ${\rm rep}_k(\overline{Q},(c))$ of finite-dimensional representations $M=((M_s)_{s \in Q_0}, (M_a)_{a \in \overline{Q}_1})$ of $\overline{Q}$ such that for any two vertices $s,t \in Q_0$ and any linear combination $\sum_{i=1}^m\lambda_ip_i \in (c)$ of paths $p_i \colon s \to t$ with scalars $\lambda_i \in k$ the associated linear map $\sum_{i=1}^m\lambda_iM_{p_i}$ is zero.

There is a {\it restriction functor} $\pi_Q \colon {\rm mod}(\Lambda) \to {\rm mod}(kQ)$ given by forgetting the linear maps associated with $a^*$ for all $a \in Q_1$ in the corresponding representation of the quiver $\overline{Q}$. Ringel \cite[Theorem B]{Ringel:PreprojectiveAlg} proved that the category ${\rm mod}(\Lambda)$ is equivalent to the category $C(1,\tau)$ whose objects are pairs $(X,f)$ consisting of a $kQ$-module $X$ and a $kQ$-module homomorphism $f \colon X \to \tau (X)$ from $X$ to its translate $\tau (X)$ and where a morphism $h\colon (X,f) \to (Y,g)$ is given by a $kQ$-module homomorphism $h \colon X \to Y$ for which the diagram 
\begin{center}
\hspace{0cm}
\begin{xy}
  \xymatrix{
    X \ar[r]^{h} \ar[d]^{f} &  Y  \ar[d]^{g}  \\
    \tau (X) \ar[r]^{\tau(h)} &  \tau (Y)  
  }
\end{xy}
\end{center}
commutes. 

Using the correspondence from above Gei\ss -Leclerc-Schr\"{o}er \cite[Section 7.1]{GLS:Unipotent} constructed for every $i \in Q_0$ and any natural numbers $a,b$ satisfying $0 \leq a \leq b \leq 1$ a $\Lambda$-module $T_{i,[a,b]}=\left(I_{i,[a,b]},e_{i,[a,b]} \right)$ where $I_{i,[a,b]}=\bigoplus_{j=a}^b \tau^j(I_i)$ and the map $$e_{i,[a,b]} \colon I_{i,[a,b]} \to \tau \left( I_{i,[a,b]}\right)=\bigoplus_{j=a+1}^{b+1} \tau^j(I_i) $$ is identity on every $\tau^j(I_i)$ for $a+1 \leq j \leq b$ and zero otherwise. We study $\Lambda$-modules $T_{i,[a,b]}$ for $i \in Q_0$ and $0 \leq a,b \leq 1$. We display the modules in Figures \ref{t00mod}, \ref{t01mod}, \ref{t11mod}.

\begin{figure}
\begin{center}
\includegraphics{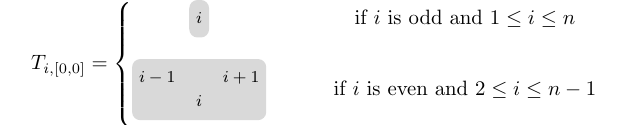}
\caption{The modules $T_{i,[0,0]}$}\label{t00mod}
\end{center}
\end{figure}

\begin{figure}
\begin{center}
\includegraphics{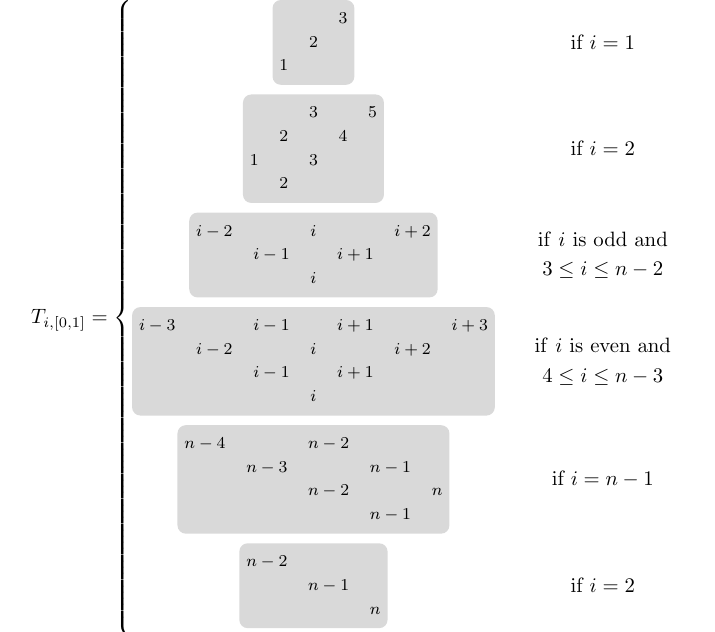}
\caption{The modules $T_{i,[0,1]}$}\label{t01mod}
\end{center}
\end{figure}

\begin{figure}
\begin{center}
\includegraphics{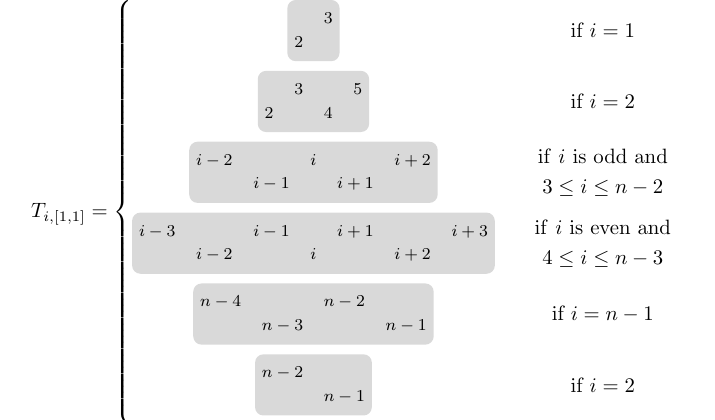}
\caption{The modules $T_{i,[1,1]}$}\label{t11mod}
\end{center}
\end{figure}

The modules $T_{i,[a,b]}$ for $i\in Q_0$ and $0 \leq a,b \leq 1$ are {\it rigid} and {\it nilpotent}. Recall that a $\Lambda$-module $T$ is said to be rigid if ${\rm Ext}_{\Lambda}^1(T,T)=0$ and it is said to be nilpotent if there exists an integer $N>0$ such that for each path $a_1a_2\cdots a_N$ of length $N$ in $\overline{Q}$ the associated linear map $T_{a_1}T_{a_2}\cdots T_{a_N}$ is zero. Rigidity follows from Gei\ss -Leclerc-Schr\"{o}er \cite[Lemma 7.1]{GLS:Unipotent}; nilpotency follows from Lusztig \cite[Proposition 14.2]{Lusztig:Quivers}.

Similarly, the representation theory of the path algebra $kQ'$ is closely related to the representation theory of the corresponding preprojective algebra $\Lambda'$.

\subsection{Notations from Lie theory}
The representation theory of the quiver $Q$ is related with Lie theory. Let $k=\mathbb{C}$. The Lie algebra associated with the Dynkin diagram $A_n$ is $\mathfrak{g}=\mathfrak{sl}_{n+1}$, i.e., the Lie algebra of $(n+1)\times(n+1)$ matrices with complex entries and vanishing trace. It admits a triangular decomposition $\mathfrak{g}=\mathfrak{n}_{-} \oplus \mathfrak{h} \oplus \mathfrak{n}$. Here, $\mathfrak{n}$ and $\mathfrak{n}_{-}$ denote the Lie algebras of strictly upper and strictly lower triangular $(n+1)\times(n+1)$ matrices, respectively, and $\mathfrak{h}$ denotes the Lie algebra of $(n+1)\times(n+1)$ diagonal matrices. The Lie algebra $\mathfrak{n}$ is called the {\it positive part} of $\mathfrak{g}$. 

Let $C=(a_{ij})_{1 \leq i,j \leq n}$ be the {\it Cartan matrix} associated with the quiver $Q$; its entries are: 
$$a_{ij}=\begin{cases}2, & {\rm if} \ i=j;\\-1, & {\rm if} \vert i-j\vert=1; \\ 0, & {\rm otherwise}.\end{cases}$$ 

The Lie algebra $\mathfrak{g}=\mathfrak{sl}_{n+1}$ is studied by its {\it roots}. The {\it root lattice} $R$ is defined to be the free abelian group generated by $\alpha_1,\alpha_2,\ldots,\alpha_n$. Here, the elements $\alpha_1,\alpha_2,\ldots,\alpha_n\in\mathfrak{h}^{\ast}$ are defined by $\alpha_i(\operatorname{diag}(d_1,d_2,\ldots,d_n))=d_i-d_{i+1}$ and they are called {\it simple roots}.  By $R^+ \subset R$ we denote the set of all linear combinations $\sum_{i=1}^nd_i\alpha_i$ with $d_i \in \mathbb{N}$. There is a symmetric bilinear form $(\cdot,\cdot) \colon R \times R \to \mathbb{R}$ which satisfies $(\alpha_i,\alpha_j)=a_{ij}$ for $1 \leq i,j \leq n$. By $\Delta^+\subseteq R$ we denote the set of {\it positive roots} of the corresponding root system. Then $\Delta^+ = \left\lbrace \alpha_i+\alpha_{i+1} + \cdots + \alpha_j \colon 1 \leq i \leq j \leq n\right\rbrace$. Under the bijection of Gabriel's theorem, a positive root $\alpha_i+\alpha_{i+1} + \cdots + \alpha_j$ with $1 \leq i \leq j \leq n$ is mapped to the indecomposable representation $V_{[i,j]}$ from Section \ref{IndecMod}.

The simple reflections $s_1$,$s_2,\ldots,s_n \colon \mathfrak{h}^{*} \to \mathfrak{h}^{*}$ act on the simple roots by
$$s_i(\alpha_j)=\begin{cases}-\alpha_i, & {\rm if} \ i=j; \\ \alpha_i+\alpha_j, & {\rm if} \ \vert i-j\vert=1; \\ \alpha_j, & {\rm otherwise}.\end{cases}$$ The group $W$ generated by the simple reflections is called the {\it Weyl group} of type $\mathfrak{g}$. The simple reflection satisfy the following relations
\begin{align}
s_is_j&=s_js_i, && {\rm if} \ \vert i-j \vert \geq 2; \label{s1}\\ 
s_is_js_i&=s_js_is_j, && {\rm if} \ \vert i-j \vert =1; \label{s2}\\
s_i^2&=1, 
\end{align}
for all $1 \leq i,j \leq n$. Therefore, the Weyl group $W$ is isomorphic to the symmetric group $S_n$.

To every terminal $kQ$-module Gei\ss -Leclerc-Schr\"{o}er \cite[Section 3.7]{GLS:Unipotent} attach a $Q^{op}$-adapted Weyl group element. The $Q^{op}$-adapted Weyl group element associated with the terminal module $M$ from Section \ref{IndecMod} is $w=s_1s_3s_5\cdots s_{n}s_2s_4s_6 \cdots s_{n-1}s_1s_3s_5 \cdots s_{n}s_2s_4s_6 \cdots s_{n-1}$. The given expression for $w$ is reduced. Let $j_1,j_2,\ldots,j_{2n}\in [1,n]$ such that for the reduced expression for $w$ from above we have $w=s_{j_1}s_{j_2}\cdots s_{j_{2n}}$. We put $\beta_1=\alpha_{j_1}$ and $\beta_k=s_{j_1}s_{j_2} \cdots s_{j_{k-1}}(\alpha_{j_k})$ for $2 \leq k \leq 2n$. Denote by $\Delta_w^+=\left\lbrace\beta_1,\beta_2,\ldots,\beta_{2n}\right\rbrace \subseteq \Delta^+$ the set of all $\beta_k$ with $1 \leq k \leq 2n$. Note that the notation is well-defined. If we choose another reduced expression $w=s_{j'_1}s_{j'_2}\cdots s_{j'_{2n}}$ for $w$, then $$\left\lbrace s_{j'_1}s_{j'_2}\cdots s_{j'_{k-1}}(\alpha_{j'_k}) \colon 1 \leq k \leq 2n \right\rbrace=\left\lbrace\beta_1,\beta_2,\ldots,\beta_{2n}\right\rbrace.$$

Furthermore, notice that under the bijection of Gabriel's theorem, the $2n$ positive roots $\beta_k$ with $1 \leq k \leq 2n$, correspond to the dimension vectors of the indecomposable direct summands of $M$ (compare Figure \ref{fig:QuiverA}). More precisely, for $n \geq 5$,
\begin{align*}
\Delta_w^+&=\left\lbrace \alpha_i \colon \ {\rm is \ odd \ and \ } 1 \leq i \leq n \right\rbrace \\
&\quad \cup \left\lbrace \alpha_{i-1}+\alpha_i+\alpha_{i+1} \colon \ {\rm is \ even \ and \ } 2 \leq i \leq n-1 \right\rbrace\\
&\quad \cup \left\lbrace \alpha_2+\alpha_3 \right\rbrace \cup \left\lbrace \alpha_{n-2}+\alpha_{n-1} \right\rbrace \\
&\quad \cup \left\lbrace \alpha_{i-2}+\cdots+\alpha_{i+2} \colon \ {\rm is \ odd \ and \ } 3 \leq i \leq n-3 \right\rbrace\\
&\quad \cup \left\lbrace \alpha_2+\alpha_3+\alpha_4+\alpha_5 \right\rbrace \cup \left\lbrace \alpha_{n-4}+\alpha_{n-3}+\alpha_{n-2}+\alpha_{n-1} \right\rbrace \\
&\quad \cup \left\lbrace \alpha_{i-3}+\cdots+\alpha_{i+3} \colon \ {\rm is \ even \ and \ } 4 \leq i \leq n-4 \right\rbrace.
\end{align*}

The universal enveloping algebra $U(\mathfrak{n})$ of $\mathfrak{n}$ is the associative $\mathbb{C}$-algebra generated by $E_i$ $(1 \leq i \leq n)$ subject to the relations 
\begin{align}
&E_iE_j = E_jE_i, && {\rm for} \ \vert i-j\vert \geq 2, \label{CommForNonNeighbors}\\
&E_i^2E_j-2E_iE_jE_i+E_jE_i^2=0, && {\rm for} \ \vert i-j\vert=1. \label{RelForNeighbors}
\end{align}
The last relation is called {\it Serre relation}.

Similarly, the representation theory of the quiver $Q'$ of type $A_{n-1}$ is linked with the Lie algebra $\mathfrak{g}'=\mathfrak{sl}_n$ with Weyl group $W'$. The Lie algebra $\mathfrak{g}'=\mathfrak{sl}_n$ similarly admits a triangular decomposition $\mathfrak{g}'=\mathfrak{n}'_{-} \oplus \mathfrak{h}' \oplus \mathfrak{n}'$. The Weyl group element associated with $M'$ is $w'=s_1s_3s_5\cdots s_{n-2}s_2s_4s_6 \cdots s_{n-1}s_1s_3s_5 \cdots s_{n-2}s_2s_4s_6 \cdots s_{n-1}\in W'$. The universal enveloping algebra $U(\mathfrak{n}')$ may be viewed as the subalgebra of $U(\mathfrak{n})$ generated by $E_i$ $(1 \leq i \leq n-1)$.

\subsection{The cluster algebra attached to the terminal module}
\label{SubSection:ClusterAlgebra}
To the terminal $\mathbb{C}Q$-module $M$ from Section \ref{RepTheo} Gei\ss -Leclerc-Schr\"oer (\cite[Section 4]{GLS:Unipotent}) attached the subcategory $\mathcal{C}_M=\pi_Q^{-1}(\operatorname{add}(M))\subseteq {\rm nil}(\Lambda)$ of the category of nilpotent $\Lambda$-modules. Here, $\operatorname{add}(M)\subseteq{\rm mod}(\mathbb{C}Q)$ is the subcategory consisting of all modules isomorphic to direct summands of direct sums of finitely many copies of $M$. 

The projective and injective objects in $\mathcal{C}_M$ coincide, so $\mathcal{C}_M$ is a {\it Frobenius category} and the stable category $\underline{\mathcal{C}}_M$ is triangulated according to Happel \cite[Section 2.6]{Happel}. Furthermore, Gei\ss -Leclerc-Schr\"oer \cite[Theorem 11.1]{GLS:Unipotent} showed that there is an equivalence of triangulated categories $\underline{\mathcal{C}}_M \simeq \mathcal{C}_Q$ between $\underline{\mathcal{C}}_M$ and the {\it cluster category} $\mathcal{C}_Q$ as defined by Buan-Marsh-Reineke-Reiten-Todorov \cite{BMRRT} to be the orbit category $\mathcal{D}^b\left({\rm mod}(\mathbb{C}Q)\right)/\tau_{\mathcal{D}}^{-1} \circ [1]$. 

With every $\mathcal{C}_M$ Gei\ss -Leclerc-Schr\"oer \cite[Section 4]{GLS:Unipotent} associated a cluster algebra $\mathcal{A}(\mathcal{C}_M)$; it is constructed as a subalgebra of the graded dual of the universal enveloping algebra of the positive part of the corresponding Lie algebra, i.e., $\mathcal{A}(\mathcal{C}_M) \subseteq U(\mathfrak{n})_{gr}^{\ast}$. For a definition of and a general introduction to cluster algebras see Fomin-Zelevinsky \cite{FZ:Notes}. The cluster algebra $\mathcal{A}(\mathcal{C}_M)$ is also called $\mathcal{A}(w)$.

There is an isomorphism between $U(\mathfrak{n})$ and an algebra $\mathcal{M}$ of $\mathbb{C}$-valued functions on ${\rm mod}(\Lambda)$. We refer to Gei\ss -Leclerc-Schr\"{o}er \cite{GLS:Unipotent} for a precise definition of $\mathcal{M}$. It is generated by functions $d_{\textbf{i}}$ that map a $\Lambda$-module $X$ to the Euler characteristic of the flag variety of $X$ of type $\textbf{i}$. Prominent elements in $\mathcal{A}(\mathcal{C}_M)$ are (under the described isomorphism) the $\delta$-functions of the rigid $\Lambda$-modules $T_{i,[a,b]}$ with $i \in Q_0$ and $0 \leq a \leq b \leq 1$. For $1 \leq i \leq n$ put 
\begin{align*}
&P_i=\delta_{T_{i,[0,1]}};\\
&Y_i=\begin{cases}\delta_{T_{i,[0,0]}}, &{\rm if} \ i \ {\rm is \ odd};\\\delta_{T_{i,[1,1]}}, &{\rm if} \ i \ {\rm is \ even};\end{cases}
\end{align*}
\begin{align*}
&Z_i=\begin{cases}\delta_{T_{i,[0,0]}}, &{\rm if} \ i \ {\rm is \ even};\\\delta_{T_{i,[1,1]}}, &{\rm if} \ i \ {\rm is \ odd}.\end{cases}
\end{align*}
Note that the module $T_{i,[0,1]}$ corresponding to the variable $P_i$ (for $1\leq i\leq n$) is a projective object in the category $\mathcal{C}_M$, but it is in general not projective in ${\rm mod}(\Lambda)$.     

The initial seed of the cluster algebra \ACM \ for the case $n=9$ is shown in Figure \ref{fig:IniCluster}. The vertices represent the cluster variables in the initial cluster, the arrows describe the initial exchange matrix. Just as in Keller's mutation applet \cite{Keller:Java}, the blue vertices are frozen, the red vertices are mutable. The frozen variables $P_1,P_2,\ldots,P_n$ may be viewed as coefficients in the sense of Fomin-Zelevinsky \cite{FZ:ClusterAlgIV}. The cluster algebra \ACM \ is of type $A_n$, and therefore of finite type. Besides the $n$ frozen variables there are $n+\frac{n(n+1)}{2}$ mutable cluster variables grouped into $C_{n+1}=\frac{1}{n+2}\binom{2n+2}{n+1}$ clusters, where $C_ {n+1}$ denotes the $(n+1)^{{\rm th}}$ Catalan number (see Fomin-Zelevinsky \cite[Section 12]{FZ:ClusterAlgII}). The Catalan number $C_{n+1}$ is the number of triangulations of a convex polygon with $n+3$ sides using only diagonals. 

\begin{figure}
\centering
\includegraphics{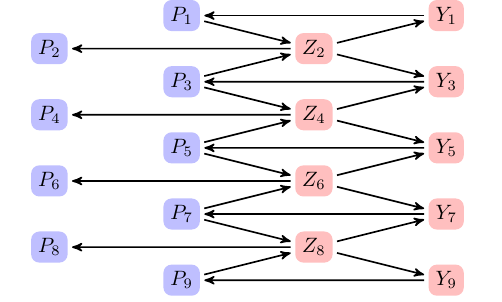}
\caption{The initial seed for the case $n=9$} \label{fig:IniCluster}
\end{figure}

The $\delta$-functions of $P_i$, $Y_i$, and $Z_i$ for $i \in Q_0$ are not algebraically independent. For example, the equation
\begin{align}
\label{InitialExchange}
 P_i=Y_iZ_i-Z_{i-1}Z_{i+1}
\end{align}
holds for every $i\in Q_0$. The equations are due to Gei\ss -Leclerc-Schr\"{o}er \cite[Theorem 18.1]{GLS:Unipotent} and called {\it determinantal identities}. Here and in what follows we use the convention $Z_0=Z_{n+1}=1$.

Similarly, we can construct a cluster algebra $\mathcal{A}(\mathcal{C}_{M'})$ associated with the terminal $\mathbb{C}Q'$-module $M'$ from Section \ref{RepTheo}. The initial seed of $\mathcal{A}(\mathcal{C}_{M'})$ is obtained from the initial seed of $\mathcal{A}(\mathcal{C}_M)$ by ignoring the vertices $Y_n$ and $P_n$ and all incident arrows. We denote the corresponding cluster variables of $\mathcal{A}(\mathcal{C}_{M'})$ by $P'_i$, $Y'_i$, and $Z'_i$ (for $1\leq i \leq n-1)$.

\subsection{The description of cluster variables}
\label{DescriptionOfClusterVariables}
In this subsection we describe the cluster variables explicitly. Note that our desription of cluster variables differs from the explicit description of Gei\ss -Leclerc-Schr\"{o}er \cite[Section 18.2]{GLS:Unipotent} due to a different choice of orientation of the quiver. Put $c_i=\frac{Z_{i-1}Z_{i+1}}{Z_i}$ for $1 \leq i \leq n$.

\begin{definition}
\label{DefDeterminant}
For two natural numbers $i,j$ with $1\leq i \leq j \leq n$ let $M_{ij}=\left((M_{ij})_{rs}\right)_{i \leq r,s \leq j}$ be the $(j-i+1)\times(j-i+1)$ matrix defined by
\begin{align*}
(M_{ij})_{rs}=\begin{cases}
\frac{Y_r}{c_r}, &{\rm if} \ r=s;\\
1, &{\rm if} \ s>r \ {\rm or} \ r=s+1;\\
0, &{\rm otherwise}.
\end{cases}
\end{align*}
Put $\Delta_{i,j}=c_ic_{i+1}\cdots c_j\cdot{\rm det}(M_{ij})$, i.e., $\Delta_{i,j}$ is given by the following determinant:
\begin{align*}\Delta_{i,j}=c_ic_{i+1} \cdots c_j \begin{vmatrix}\frac{Y_i}{c_i}&1&1&1&\cdots&1&1&1\\1&\frac{Y_{i+1}}{c_{i+1}}&1&1&\cdots&1&1&1\\0&1&\frac{Y_{i+2}}{c_{i+2}}&1&\cdots&1&1&1\\0&0&1&\frac{Y_{i+3}}{c_{i+3}}&\cdots&1&1&1\\\vdots&\vdots&\vdots&\vdots&\ddots&\vdots&\vdots&\vdots\\0&0&0&0&\cdots&\frac{Y_{j-2}}{c_{j-2}}&1&1\\0&0&0&0&\cdots&1&\frac{Y_{j-1}}{c_{j-1}}&1\\0&0&0&0&\cdots&0&1&\frac{Y_j}{c_j}\end{vmatrix}.
\end{align*}
\end{definition}

\begin{remark}
Note that
\begin{align*}
\Delta_{i,j}=\begin{vmatrix}Y_i&c_i&c_i&c_i&\cdots&c_i&c_i&c_i\\c_{i+1}&Y_{i+1}&c_{i+1}&c_{i+1}&\cdots&c_{i+1}&c_{i+1}&c_{i+1}\\0&c_{i+2}&Y_{i+2}&c_{i+2}&\cdots&c_{i+2}&c_{i+2}&c_{i+2}\\0&0&c_{i+3}&Y_{i+3}&\cdots&c_{i+3}&c_{i+3}&c_{i+3}\\\vdots&\vdots&\vdots&\vdots&\ddots&\vdots&\vdots&\vdots\\0&0&0&0&\cdots&Y_{j-2}&c_{j-2}&c_{j-2}\\0&0&0&0&\cdots&c_{j-1}&Y_{j-1}&c_{j-1}\\0&0&0&0&\cdots&0&c_j&Y_j\end{vmatrix}
\end{align*}
for $1 \leq i \leq j \leq n$. It follows that each $\Delta_{i,j}$ $(1 \leq i,j \leq n)$ is actually a polynomial in $Y_i$ $(1 \leq i \leq n)$ and $Z_i$ $(1 \leq i \leq n)$, i.e., $\Delta_{i,j} \in \mathbb{Z}[Y_k,Z_k \colon 1 \leq k \leq n]$ for all $i \leq j$. Polynomiality follows from Gei\ss -Leclerc-Schr\"{o}er \cite[Theorem 3.4]{GLS:Unipotent}, but is also follows directly from the formula above once we notice that $c_ic_{i+1} \cdots c_j\in \mathbb{Z}[Z_k \colon 1 \leq k \leq n]$ for all $i,j$ with $1 \leq i < j \leq n$.
\end{remark}

\begin{proposition}
\label{RecursionForClusterVar}
For all $i,j$ with $1 \leq i \leq j \leq n$ and $j-i \geq 3$ the equation $$\Delta_{i,j}=Y_j\Delta_{i,j-1}-Z_{j+1}P_{j-2}\Delta_{i,j-3}$$ holds.
\end{proposition}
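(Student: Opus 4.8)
The statement is a three-term recursion for the tridiagonal-like determinants $\Delta_{i,j}$, so the natural plan is to expand $\det(M_{ij})$ along its last row (or column) and iterate once more to reach $\Delta_{i,j-3}$. I would work with the normalized form from Definition \ref{DefDeterminant}, where the determinant is that of $M_{ij}$ with diagonal entries $Y_r/c_r$, $1$'s strictly above the diagonal and on the first subdiagonal, and $0$'s below; the prefactor $c_ic_{i+1}\cdots c_j$ is then carried along separately. Cofactor expansion along the last row of $M_{ij}$: the $(j,j)$-entry is $Y_j/c_j$ with complementary minor $\det(M_{i,j-1})$, and the $(j,j-1)$-entry is $1$ (it lies on the subdiagonal). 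The key structural observation is that deleting row $j$ and column $j-1$ from $M_{ij}$ leaves a matrix that is block lower-triangular: the last column (old column $j$) has a single nonzero entry, namely the $1$ in the old row $j-1$ at position $(j-1,j)$, because everything below it in that column was already $0$. Hence that minor equals (up to sign, which I will track carefully) $\det(M_{i,j-2})$ times $1$. This yields
\begin{align*}
\det(M_{ij}) = \frac{Y_j}{c_j}\det(M_{i,j-1}) - \det(M_{i,j-2}).
\end{align*}
Multiplying through by $c_i\cdots c_j$ gives $\Delta_{i,j} = Y_j\,\Delta_{i,j-1} - c_j\,\Delta_{i,j-2}\cdot\frac{c_i\cdots c_j}{c_i\cdots c_{j-2}\cdot c_j}$, i.e. after bookkeeping $\Delta_{i,j} = Y_j\,\Delta_{i,j-1} - c_{j-1}c_j\,\Delta_{i,j-2}$.

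Now I would apply the same two-term step once more to $\Delta_{i,j-1}$ is \emph{not} what we want; instead I expand $\Delta_{i,j-2}$ via the mirror recursion on its \emph{first} index is also not quite it. Rather, the cleanest route is: take the relation $\Delta_{i,j-1} = Y_{j-1}\Delta_{i,j-2} - c_{j-2}c_{j-1}\Delta_{i,j-3}$, solve for $c_{j-1}\Delta_{i,j-2}$? No — the right move is to eliminate $\Delta_{i,j-2}$ between the two instances of the two-term recursion (for $\Delta_{i,j}$ and for $\Delta_{i,j-1}$). From $\Delta_{i,j} = Y_j\Delta_{i,j-1} - c_{j-1}c_j\Delta_{i,j-2}$ and $\Delta_{i,j-1} = Y_{j-1}\Delta_{i,j-2} - c_{j-2}c_{j-1}\Delta_{i,j-3}$ one gets $\Delta_{i,j-2} = (Y_{j-1}\Delta_{i,j-2} - \Delta_{i,j-1})/(c_{j-2}c_{j-1})$... this is circular. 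The actual mechanism must be that $c_{j-1}c_j$ simplifies against $c_{j-2}c_{j-1}$ telescopically so that the $\Delta_{i,j-2}$ terms cancel when one substitutes; concretely, $c_{j-1}c_j\Delta_{i,j-2}$ should be rewritten using $c_{j-1}c_j\Delta_{i,j-2} = c_j\cdot c_{j-1}\Delta_{i,j-2}$ and then $Y_{j-1}$ absorbed, leaving $Z_{j+1}P_{j-2}$ as the coefficient of $\Delta_{i,j-3}$.

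The place where everything must come together — and the main obstacle — is the identity $c_{j-1}c_j\cdot c_{j-2}c_{j-1}\big/(\text{something}) = Z_{j+1}P_{j-2}$ after the cancellation, i.e. verifying that the product of $c$'s appearing from two nested two-term expansions collapses, via $c_i = Z_{i-1}Z_{i+1}/Z_i$ and the determinantal identity $P_i = Y_iZ_i - Z_{i-1}Z_{i+1}$ of \eqref{InitialExchange}, precisely to $Z_{j+1}P_{j-2}$. So the plan is: (1) prove the two-term recursion $\Delta_{i,j} = Y_j\Delta_{i,j-1} - c_{j-1}c_j\Delta_{i,j-2}$ by cofactor expansion along the last row, tracking signs; (2) apply it again to $\Delta_{i,j-1}$ and also note $c_{j-1}c_j\Delta_{i,j-2}$ must be re-expressed — actually the honest second step is to expand $\Delta_{i,j}$ along its last \emph{two} rows simultaneously (a Laplace expansion by the bottom $2\times 2$ block), which directly produces a linear combination of $\Delta_{i,j-1}$, $\Delta_{i,j-2}$, $\Delta_{i,j-3}$; (3) substitute $c_r = Z_{r-1}Z_{r+1}/Z_r$, clear denominators, and use \eqref{InitialExchange} to recognize the coefficient of $\Delta_{i,j-3}$ as $Z_{j+1}P_{j-2}$ and to check that the coefficient of $\Delta_{i,j-2}$ vanishes. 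Step (3), the algebraic collapse to the clean form $Y_j\Delta_{i,j-1} - Z_{j+1}P_{j-2}\Delta_{i,j-3}$, is where the real content lies; steps (1)–(2) are routine linear algebra, and the hypothesis $j - i \geq 3$ is exactly what guarantees $\Delta_{i,j-3}$ is a genuine $\Delta$ with $i \leq j-3$ rather than a boundary case.
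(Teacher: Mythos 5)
Your cofactor expansion along the last row starts the same way as the paper's (Laplace expansion on the last row, which has only two nonzero entries), but the evaluation of the second cofactor is wrong, and this breaks the whole argument. The matrix $M_{ij}$ is \emph{not} tridiagonal: by Definition \ref{DefDeterminant} the entry $(M_{ij})_{rs}$ equals $1$ whenever $s>r$, so the last column has nonzero entries in \emph{every} row above the diagonal, not a single nonzero entry at row $j-1$. Consequently the minor obtained by deleting row $j$ and column $j-1$ is not block triangular and does not equal $\pm\det(M_{i,j-2})$; the two-term recursion $\Delta_{i,j}=Y_j\Delta_{i,j-1}-c_{j-1}c_j\Delta_{i,j-2}$ you derive from it is false. (Check $i=1$, $j=3$: $Y_3\Delta_{1,2}-c_2c_3\Delta_{1,1}=Y_1Y_2Y_3-Y_3Z_3-Y_1Z_1$, which misses the term $+Z_2$ of $\Delta_{1,3}$.) The subsequent attempts to eliminate $\Delta_{i,j-2}$, and the fallback to a Laplace expansion along the last two rows, do not repair this: the complementary minors in that expansion are all of size $(j-i-1)\times(j-i-1)$, so $\Delta_{i,j-1}$ cannot even appear, and the minors complementary to the column pairs $(j-2,j-1)$ and $(j-2,j)$ again contain a full column of nonzero entries and are not of the form $\Delta_{i,\,\cdot}$.

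The missing idea is a column operation, not a second row expansion. In the unnormalized form of the Remark, columns $j-2$ and $j$ of the matrix agree in every row $i,\dots,j-1$ except row $j-2$, where they differ by $Y_{j-2}-c_{j-2}$. Hence in the cofactor of the entry $c_j$ at position $(j,j-1)$ one may subtract column $j-2$ from column $j$; the resulting column has a single nonzero entry $c_{j-2}-Y_{j-2}$, and after expanding along it (and then along the row $j-1$, whose surviving entries reduce to the single entry $c_{j-1}$) the cofactor collapses to $(Y_{j-2}-c_{j-2})\,c_{j-1}\,\Delta_{i,j-3}$. Since $c_{j-1}c_j=Z_{j-2}Z_{j+1}$ and $(Y_{j-2}-c_{j-2})Z_{j-2}=Y_{j-2}Z_{j-2}-Z_{j-3}Z_{j-1}=P_{j-2}$ by (\ref{InitialExchange}), the second summand is exactly $-Z_{j+1}P_{j-2}\Delta_{i,j-3}$, which is the step the paper leaves as ``easy to see.'' Your instinct about where the identity $P_{j-2}=Y_{j-2}Z_{j-2}-Z_{j-3}Z_{j-1}$ must enter is right, but without the column subtraction the intermediate determinants never reduce to $\Delta_{i,j-3}$, so as written the proof does not go through.
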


\begin{proof}
 Perform a Laplace expansion of the determinant on the last row. The last row has only two non-zero entries and it is easy to see that the two occuring summands in the Laplace expansion are the two summands in the recursion formula.
\end{proof}

For $1 \leq i \leq n$ let $\Delta_{i,i-1}$, $\Delta_{i,i-2}$, and $\Delta_{i,i-3}$ be the unique elements from $\mathbb{Q}(Y_k,Z_k \colon 1 \leq k \leq n)$ such that the recursion formula from Proposition \ref{RecursionForClusterVar} also holds for $j=i+2$, $j=i+1$, and $j=i$. Explicitly, we put $\Delta_{i,i-1}=1$, $\Delta_{i,i-2}=\frac{1}{Y_{i-1}-c_{i-1}}$, and $\Delta_{i,i-3}=0$. The next lemma follows easily from Proposition \ref{RecursionForClusterVar}.

\begin{lemma}
\label{ExchangeRelation}
For all $i,j$ with $1 \leq i \leq j \leq n$ the equation $$\Delta_{i,j}Z_j=P_j\Delta_{i,j-1}+Z_{j+1}P_{j-1}\Delta_{i,j-2}$$ holds.
\end{lemma}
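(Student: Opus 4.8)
The plan is to verify the identity by manipulating the two recursions at our disposal, namely the three-term recursion of Proposition \ref{RecursionForClusterVar},
\[
\Delta_{i,j}=Y_j\Delta_{i,j-1}-Z_{j+1}P_{j-2}\Delta_{i,j-3},
\]
together with the initial determinantal identity \eqref{InitialExchange}, which reads $P_j=Y_jZ_j-Z_{j-1}Z_{j+1}$ for $j\in Q_0$, and the definition $c_j=Z_{j-1}Z_{j+1}/Z_j$, so that $Y_jZ_j=P_j+Z_j c_j$ and $P_j=Z_j(Y_j-c_j)$. First I would check the three base cases $j=i$, $j=i+1$, $j=i+2$ directly from the explicit values $\Delta_{i,i-1}=1$, $\Delta_{i,i-2}=1/(Y_{i-1}-c_{i-1})$, $\Delta_{i,i-3}=0$; by construction these are precisely the values chosen to make the Proposition recursion extend to those $j$, and substituting into the claimed identity reduces each to a short rational-function check using $P_j=Z_j(Y_j-c_j)$ and $c_j c_{j+1}\cdots$ telescoping. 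For instance, for $j=i$ the claim becomes $\Delta_{i,i}Z_i = P_i + Z_{i+1}P_{i-1}\Delta_{i,i-2}$, and one checks $\Delta_{i,i}=Y_i$ while the right side simplifies via the definition of $c_i$ and the determinantal identity.

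For the inductive step, with $j\geq i+3$, I would substitute the Proposition recursion for $\Delta_{i,j}$ into the left-hand side and for $\Delta_{i,j-1}$, $\Delta_{i,j-2}$ into the right-hand side, so that everything is expressed in terms of $\Delta_{i,j-1},\Delta_{i,j-2},\Delta_{i,j-3},\Delta_{i,j-4}$. After collecting the coefficients of $\Delta_{i,j-1}$ and $\Delta_{i,j-2}$ (and noting the $\Delta_{i,j-3},\Delta_{i,j-4}$ terms cancel between the two recursions, or are absorbed by the inductive hypothesis applied at index $j-2$), the identity reduces to a coefficient identity purely in the $Y_k$, $Z_k$, $P_k$ — something like $Y_jZ_j - Z_{j+1}P_{j-1}\cdot(\text{coefficient}) = P_j + (\text{lower terms})$. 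This should close up using $P_j = Y_jZ_j - Z_{j-1}Z_{j+1}$ and $c_{j-1}=Z_{j-2}Z_j/Z_{j-1}$ together with the corresponding relation for $P_{j-1}$. An alternative, perhaps cleaner, route is to recognize $\Delta_{i,j}Z_j$ as itself a determinant obtained by scaling the last row of $\Delta_{i,j}$ by $Z_j$, and then perform a Laplace expansion along that last row exactly as in the proof of Proposition \ref{RecursionForClusterVar}, matching the two cofactors against $P_j\Delta_{i,j-1}$ and $Z_{j+1}P_{j-1}\Delta_{i,j-2}$ after using $Y_jZ_j=P_j+Z_jc_j$ to split the diagonal entry; I would likely present whichever is shorter.

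The main obstacle I anticipate is purely bookkeeping: keeping the index shifts and the convention $Z_0=Z_{n+1}=1$ straight at the boundary, and correctly handling the degenerate $\Delta_{i,i-1},\Delta_{i,i-2},\Delta_{i,i-3}$ values so the induction is genuinely anchored. There is no conceptual difficulty — the substance is the determinantal identity \eqref{InitialExchange} and the Laplace expansion already used — but the algebra must be organized so that the cancellation of the $\Delta_{i,j-3}$ and $\Delta_{i,j-4}$ contributions is transparent rather than miraculous. I would therefore write the proof by first isolating the key scalar identity $Y_jZ_j - Z_{j-1}Z_{j+1}=P_j$ and the two ways of expressing $Z_{j+1}P_{j-1}$, then feeding these into either the two-recursion manipulation or the direct Laplace expansion.
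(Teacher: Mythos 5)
Your overall strategy coincides with the paper's: fix $i$, induct on $j$, and combine the three-term recursion of Proposition \ref{RecursionForClusterVar} with the determinantal identity $P_j=Y_jZ_j-Z_{j-1}Z_{j+1}$. One point in your inductive step needs correcting, though. As written, you propose to substitute the three-term recursion into $\Delta_{i,j-1}$ and $\Delta_{i,j-2}$ on the right-hand side and then ``match coefficients'' of the various $\Delta_{i,k}$; since these are specific rational functions rather than independent quantities, coefficient matching is not a valid move, and the leftover terms are not absorbed by the inductive hypothesis at index $j-2$. The step closes much more directly: multiply the Proposition recursion for $\Delta_{i,j}$ by $Z_j$, replace $Y_jZ_j$ by $P_j+Z_{j-1}Z_{j+1}$, and observe that the remainder
\begin{align*}
Z_{j+1}\bigl(Z_{j-1}\Delta_{i,j-1}-Z_jP_{j-2}\Delta_{i,j-3}\bigr)
\end{align*}
is exactly $Z_{j+1}$ times the left-hand side minus part of the right-hand side of the lemma at index $j-1$, so the inductive hypothesis at $j-1$ (not $j-2$) converts it into $Z_{j+1}P_{j-1}\Delta_{i,j-2}$. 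With that adjustment only the single base case $j=i$ needs checking (it reduces to $Y_iZ_i=P_i+Z_{i-1}Z_{i+1}$ after noting $P_{i-1}\Delta_{i,i-2}=Z_{i-1}$), and the extra base cases $j=i+1,i+2$ you planned are unnecessary. Your alternative route via multilinearity and Laplace expansion of the determinant $\Delta_{i,j}Z_j$ is also viable but requires more care with the cofactor of the subdiagonal entry than the induction does.
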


\begin{proof}
Fix $i$. We prove prove Lemma \ref{ExchangeRelation} by induction on $j$. The statement is true for $j=i$ since $Y_iZ_i=P_i+Z_{i+1}Z_{i-1}$. If the statement is true for $j-1$, then by Proposition \ref{RecursionForClusterVar}
\begin{align*}
\Delta_{i,j}Z_j&=Y_jZ_j\Delta_{i,j-1}-Z_{j+1}Z_jP_{j-2}\Delta_{i,j-3}\\
&=P_j\Delta_{i,j-1}+Z_{j+1}Z_{j-1}\Delta_{i,j-1}-Z_{j+1}Z_jP_{j-2}\Delta_{i,j-3}\\
&=P_j\Delta_{i,j-1}+Z_{j+1}P_{j-1}\Delta_{i,j-2},
\end{align*}
and the statement is true for $j$.
\end{proof}

\begin{lemma}
\label{ClusterVariables}
The mutable cluster variables are $Z_1,Z_2,Z_3,\ldots,Z_n$ and $\Delta_{i,j}$ for $1 \leq i \leq j \leq n$.
\end{lemma}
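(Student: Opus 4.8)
The plan is to identify, for the finite-type cluster algebra $\mathcal{A}(\mathcal{C}_M)$ of type $A_n$, the set of all mutable cluster variables with the explicit list $\{Z_1,\ldots,Z_n\}\cup\{\Delta_{i,j}\colon 1\le i\le j\le n\}$. First I would count: the proposed list has $n+\binom{n+1}{2}=n+\frac{n(n+1)}{2}$ elements, which matches exactly the number of mutable cluster variables recorded in Section~\ref{SubSection:ClusterAlgebra}. So it suffices to prove that every element on the list is genuinely a cluster variable and that the $Z_i$ and the $\Delta_{i,j}$ are pairwise distinct; finiteness of the type then forces the list to be complete. The $Z_i$ are among the initial mutable cluster variables, and the $Y_i$ (even $i$) are frozen-adjacent mutable variables via \eqref{InitialExchange}; I would also note $\Delta_{i,i}=Y_i$, so the diagonal entries of the family are already accounted for in the initial seed (up to the identification $\Delta_{i,i}=Y_i$), and the off-diagonal $\Delta_{i,j}$ with $j>i$ are the ones that require work.

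The key step is to realise each $\Delta_{i,j}$ as a cluster variable by producing an explicit finite mutation sequence from the initial seed. I would use the combinatorial model of type $A_n$ cluster algebras: clusters correspond to triangulations of a convex $(n+3)$-gon, and cluster variables to diagonals. The initial seed in Figure~\ref{fig:IniCluster} corresponds to a distinguished ``snake'' or fan triangulation; the $Z_i$ correspond to one family of diagonals and the $\Delta_{i,j}$ should correspond precisely to the remaining diagonals, indexed by pairs of non-adjacent polygon vertices. Concretely, I would check that Lemma~\ref{ExchangeRelation}, namely $\Delta_{i,j}Z_j=P_j\Delta_{i,j-1}+Z_{j+1}P_{j-1}\Delta_{i,j-2}$, is exactly the exchange relation obtained by flipping the diagonal $Z_j$ in a cluster containing $Z_j$, $\Delta_{i,j-1}$, $\Delta_{i,j-2}$: the two frozen factors $P_j$ and $Z_{j+1}P_{j-1}$ are the products of the coefficient variables along the two triangles bordering that diagonal, matching the Ptolemy-type relation for flips. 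Starting from $\Delta_{i,i}=Y_i$ (which is obtained from the initial seed) and $\Delta_{i,i+1}$ (one further mutation, using Lemma~\ref{ExchangeRelation} at $j=i+1$ with $\Delta_{i,i-1}=1$), induction on $j$ via Lemma~\ref{ExchangeRelation} produces all $\Delta_{i,j}$ as cluster variables. Distinctness follows because distinct diagonals of the polygon give distinct cluster variables (two cluster variables coincide iff they are equal as Laurent polynomials, and the leading $Y$-monomial of $\Delta_{i,j}$, namely $Y_iY_{i+1}\cdots Y_j$, already separates them, with the $Z_i$ visibly different again).

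The main obstacle I anticipate is the bookkeeping needed to match the cluster-algebra combinatorics to the polygon model with the correct choice of orientation: the author has already flagged that this orientation differs from that of Gei\ss--Leclerc--Schr\"oer, so the triangulation corresponding to the initial seed of Figure~\ref{fig:IniCluster} must be pinned down carefully, and one must verify that the exchange matrix read off from that figure is indeed the signed adjacency matrix of that triangulation. Once the dictionary (initial seed $\leftrightarrow$ triangulation, $Z_i\leftrightarrow$ short diagonals, $Y_i=\Delta_{i,i}$, $\Delta_{i,j}\leftrightarrow$ longer diagonals) is fixed and Lemma~\ref{ExchangeRelation} is recognised as the flip relation, the rest is the routine induction sketched above together with the counting argument. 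A cleaner alternative, avoiding the polygon model, would be to argue purely inductively: show directly that each mutation from the initial seed either yields some $Z_i$ or some $\Delta_{i,j}$ by checking the finitely many mutation shapes against the recursions of Proposition~\ref{RecursionForClusterVar} and Lemma~\ref{ExchangeRelation}; but for type $A_n$ the triangulation picture makes the exhaustiveness transparent, which is why I would take that route.
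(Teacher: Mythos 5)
Your proposal follows essentially the same strategy as the paper: exhibit each listed element as a cluster variable by an explicit mutation sequence whose exchange relations are those of Lemma~\ref{ExchangeRelation}, check distinctness, and then invoke the count $n+\frac{n(n+1)}{2}$ of mutable cluster variables in finite type to conclude exhaustiveness. The paper first mutates the initial seed at all odd vertices to reach a ``base seed'' with mutable part $(Z_1,\ldots,Z_n)$, and then shows by induction that consecutive mutations at $i,i+1,\ldots,j$ produce $\Delta_{i,j}$, verifying at each step \emph{by direct quiver mutation} what the exchange polynomial is --- in particular it tracks how the arrows to the frozen vertices $P_k$ move under mutation, which is exactly what makes the coefficients $P_j$ and $Z_{j+1}P_{j-1}$ appear in the exchange relation. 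Your variant replaces this tracking by the triangulated-polygon model. That is where your sketch is thinnest: the polygon model determines the \emph{principal} part of the exchange matrix, but the rows attached to the frozen variables $P_k$ are extra data not encoded by the triangulation, so the assertion that the two exchange monomials are ``products of the coefficient variables along the two bordering triangles'' is precisely the statement that still has to be proved by following the arrows to the $P_k$ through the mutation sequence (or by identifying the $P_k$ with specific boundary arcs and justifying that identification). You flag this as ``bookkeeping,'' but it is the actual content of the paper's induction step, so in a written-out version you would end up reproducing the paper's computation in different clothing. Two smaller points: the initial seed contains $Y_i$ for odd $i$ and $Z_i$ for even $i$ (e.g.\ $(Y_1,Z_2,Y_3)$ for $n=3$), not all the $Z_i$, so the odd-indexed $Z_i$ already require one mutation each --- this is the purpose of the paper's passage to the base seed; and your distinctness argument via the leading monomial $Y_iY_{i+1}\cdots Y_j$ of $\Delta_{i,j}$ is fine and matches what the paper implicitly uses.
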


\begin{proof}
Starting with the initial seed (which is shown in Figure \ref{fig:IniCluster} for the case $n=9$) perform mutations at the odd vertices $1,3,5,\ldots,n$, consecutively. In each step, because of the equation $Y_iZ_i=P_i+Z_{i-1}Z_{i+1}$, the cluster variable $Y_i$ for odd $i$ with $1 \leq i \leq n$ is replaced by the cluster variable $Z_i$. Therefore, the mutations generate a seed whose mutable cluster variables are $Z_1,Z_2,Z_3,\ldots,Z_n$. We refer to that seed as the {\it base seed}. The exchange matrix of the base seed is described by the associated quiver. By the rules of quiver mutation the mutable vertices of the base seed form an alternating quiver of type $A_n$ isomorphic to $Q$. The only other arrows are the following. For every $i$ with $1 \leq i \leq n$ there is an arrow between $Z_i$ and $P_i$ starting in $P_i$ if $i$ is odd and starting in $Z_i$ if $i$ is even. The quiver of the base seed for the example $n=9$ is shown in Figure \ref{fig:SnakeCluster}.
\begin{figure}
\centering
\includegraphics{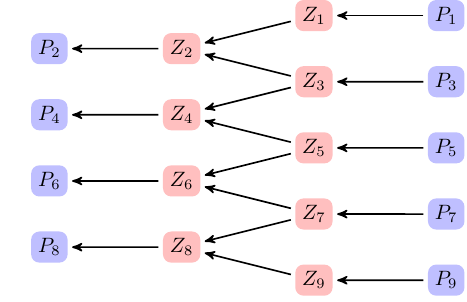}
\caption{The base seed for the case $n=9$} \label{fig:SnakeCluster}
\end{figure}

We now claim that starting from the base seed the cluster variable obtained by consecutive mutations at $i,i+1,i+2,\ldots,j$ is $\Delta_{i,j}$ for all $1 \leq i \leq j \leq n$. The equation $\Delta_{i,j}Z_j=P_j\Delta_{i,j-1}+Z_{j+1}P_{j-1}\Delta_{i,j-2}$ from Lemma \ref{ExchangeRelation} is the exchange relation. For a proof consider the mutation of the quiver of the base seed. Fix $i$. We assume that $i$ is odd. (If $i$ is even, then reverse all arrows in the following argumentation.) We prove the statement by induction on $j$. The statement is true for $i=j$ since mutation at $i$ yields $(P_i+Z_{i-1}Z_{i+1})/Z_i=Y_i=\Delta_{i,i}$. It is also true for $j=i+1$ because $\Delta_{i,i+1}Z_{i+1}=P_{i+1}\Delta_{i,i}+Z_{i+2}P_i\Delta_{i,i-1}=P_{i+1}\Delta_{i,i}+Z_{i+2}P_i$. 

Now assume that $j \geq i+2$ and that consecutive mutations at $i,i+1,i+2,\ldots,j-1$ yield cluster variables $\Delta_{i,i},\Delta_{i,i+1},\ldots,\Delta_{i,j-1}$. Let us describe the quiver after these mutations; let us first concentrate on the subquiver given by all mutable vertices. It is easy to see that the subquiver supported on vertices $(Z_1,Z_2,\ldots,Z_{i-1})$ is the same as in the base quiver; similarly, the subquiver supported on vertices $(Z_j,Z_{j+1},\ldots,Z_n)$ is unchanged. The description of the other remaining part depends on the parity of $j$. If $j$ is even, then it consists of the two sequences $Z_{i-1} \to \Delta_{i,i} \to \Delta_{i,i+2} \to \Delta_{i,i+4} \to \cdots \to \Delta_{i,j-1}$ and $\Delta_{i,j-2} \to \Delta_{i,j-4} \to \cdots \to \Delta_{i,i+3} \to \Delta_{i,i+1}$ and of an oriented triangle $Z_j \to \Delta_{i,j-1} \to \Delta_{i,j-2} \to Z_j$. If $j$ is odd, then it consists of the two sequences $Z_{i-1} \to \Delta_{i,i} \to \Delta_{i,i+2} \to \Delta_{i,i+4} \to \cdots \to \Delta_{i,j-2}$ and $\Delta_{i,j-1} \to \Delta_{i,j-3} \to \cdots \to \Delta_{i,i+3} \to \Delta_{i,i+1}$ and an oriented triangle $Z_j \to \Delta_{i,j-2} \to \Delta_{i,j-1} \to Z_j$. 
\begin{figure}
\centering
\includegraphics{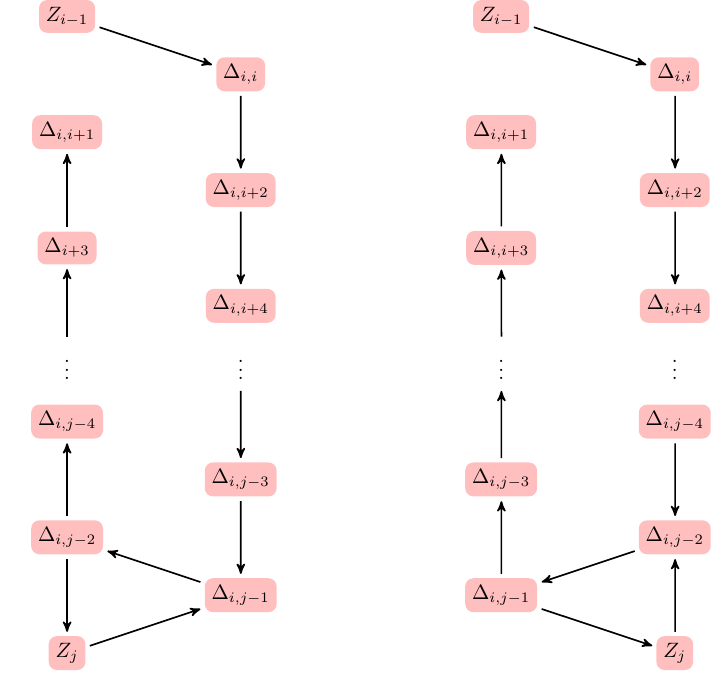}
\caption{The mutated seed for even $j$ (left) and odd $j$ (right)} \label{fig:DeltaCluster}
\end{figure}

Now let us consider frozen vertices. Consider a natural number $k$ with $i \leq k \leq j$. We are interested in the vertices $Z_l$ and $\Delta_{i,l}$ with $k \leq l \leq j-1$ to which $P_k$ is connected. In the base seed the vertex $P_k$ is only connected with $Z_k$. Let us assume that $k$ is even. (If $k$ is odd, then reverse all arrows in the following argumentation.) We have an arrow $Z_k \to P_k$ in the base seed. The adjacency relations for $P_k$ remain unaffected by mutations at $i,i+1\ldots,k-1$. After mutation at $k$ the arrows reverses (and $Z_k$ is replaced by $\Delta_{i,k}$) and we get an additional arrow $Z_{k+1} \to P_k$. Mutation at $k+1$ cancels the arrow $P_k \to \Delta_{i,k}$ whereas the arrow $Z_{k+1} \to P_k$ is replaced by an arrow $P_k \to \Delta_{i,k+1}$. Afterwards all adjacency relations for $P_k$ with vertices $Z_l$ for $k \leq l$ remain unaffected.

The adjacency relations for the vertices together with the induction hypothesis and the mutation rule for cluster variables imply that $(P_j\Delta_{i,j-1}+Z_{j+1}P_{j-1}\Delta_{i,j-2})/Z_j$ is the cluster variable obtained from consecutive mutation at $i,i+1,\ldots,j$. By Lemma \ref{ExchangeRelation} it is equal to $\Delta_{i,j}$.

The number of mutable cluster variables of a cluster algebra of finite type is the sum of the rank of the cluster algebra and the number of positive roots of the associated root system. Since the $n+\frac{n(n+1)}{2}$ cluster variables $Z_1,Z_2,Z_3,\ldots,Z_n$ and $\Delta_{i,j}$ for $1 \leq i \leq j \leq n$ are all distinct these must be all mutable cluster variables.
\end{proof}

By Lemma \ref{ClusterVariables} the recursion provided by Proposition \ref{RecursionForClusterVar} allows to compute iteratively every cluster variable in terms of the $Y_i$ and $Z_i$ $(1 \leq i \leq n)$.

\begin{example}
Let us look at an example. We put $n=3$. The initial cluster contains three mutable and three frozen variables. It is $(P_1,P_2,P_3,Y_1,Z_2,Y_3)$. One can check, by hand or by using Keller's mutation applet \cite{Keller:Java}, that the following figure describes the exchange graph of the cluster algebra \ACM \ in the case $n=3$. This particular exchange graph is known as {\it associahedron} or {\it Stasheff polytope} $K_5$, see Fomin-Reading \cite[Section 3.1]{FR}. The mutable cluster variables are colored red, the frozen cluster variables blue. Beside the $3+3$ initial cluster variables there are $6$ further cluster variables, namely $Z_1$, $Y_2$, $Z_3$, 
\begin{align*}
&\Delta_{1,2}=\begin{vmatrix}Y_1&c_1\\c_2&Y_2\end{vmatrix}=Y_1Y_2-Z_3, \quad \Delta_{2,3}=\begin{vmatrix}Y_2&c_2\\c_3&Y_3\end{vmatrix}=Y_2Y_3-Z_1,\\ &\Delta_{1,3}=\begin{vmatrix}Y_1&c_1&c_1\\c_2&Y_2&c_2\\0&c_3&Y_3\end{vmatrix}=Y_1Y_2Y_3-Y_1Z_1-Y_3Z_3+Z_2. 
\end{align*}
The cluster variables are grouped into $C_4=14$ clusters.
\end{example}

\begin{figure}
\centering
\includegraphics[scale=0.507,angle=90]{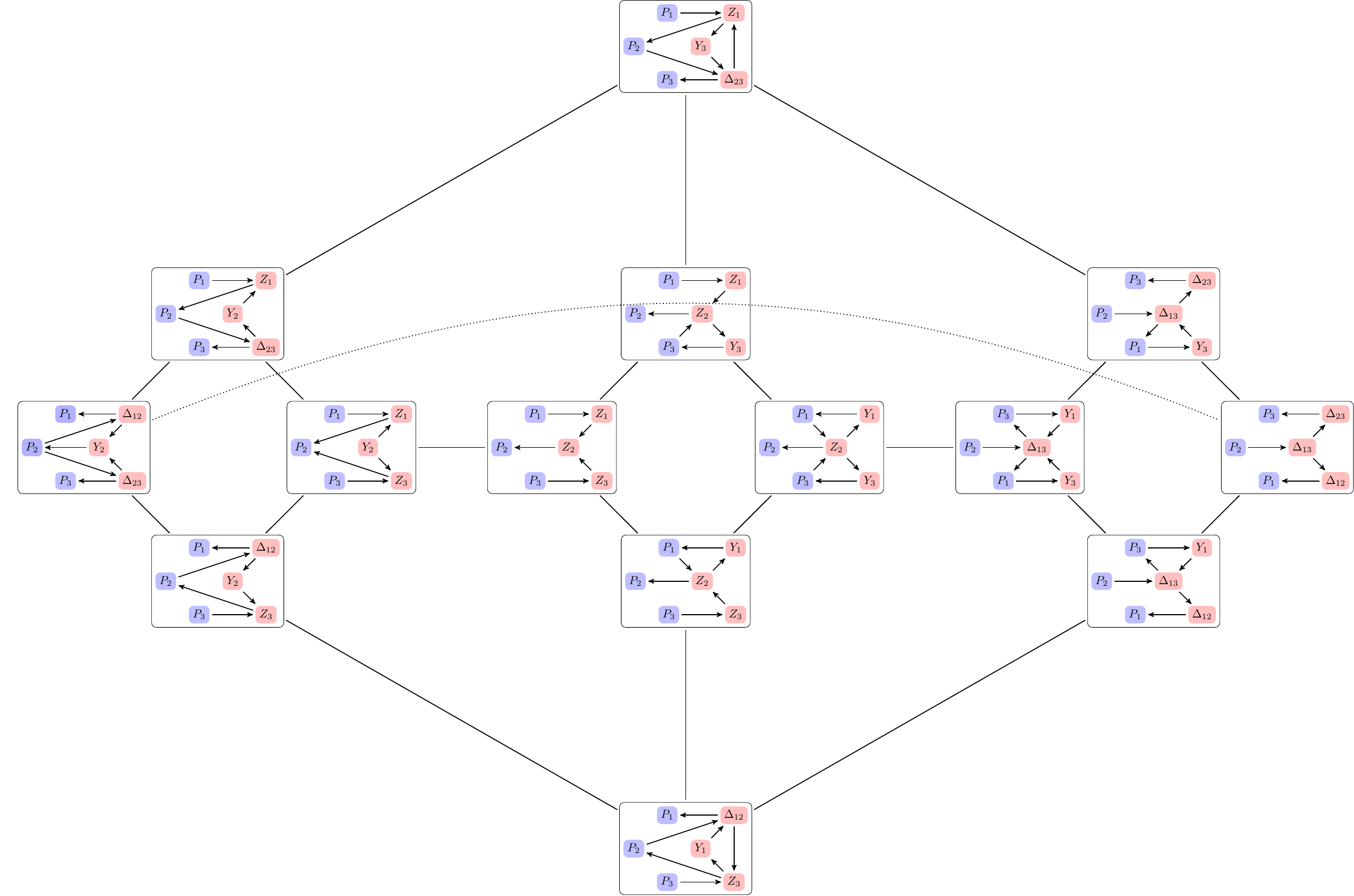}
\caption{The exchange graph for $n=3$: a Stasheff polyhedron} 
\label{Stasheff}
\end{figure}

\begin{remark}Formulae for cluster variables in $\mathcal{A}(\mathcal{C}_{M'})$ can be obtained from these formulae by setting $Y_n=Z_n=P_n=1$.
\end{remark}

\begin{remark}
The exchange relation (\ref{InitialExchange}) implies that mutation of the base seed at $Z_i$ for some $i\in\{1,2,\ldots,n\}$ yields the cluster variable $Y_i$. Note that the base seed is \textit{acyclic}, i.e., the corresponding quiver does not contain oriented cycles. Hence, by Berenstein-Fomin-Zelevinsky \cite[Theorem 1.16]{BFZ} the cluster algebra $\mathcal{A}(\mathcal{C}_{M})$ is equal to its \textit{lower bound}, i.e., the set $\{Y_i,Z_i\colon 1\leq i\leq n\}$ generates $\mathcal{A}(\mathcal{C}_{M})$.  
\end{remark}

\begin{remark}
\label{Rohleder}
The cluster variables correspond to $\delta$-functions of indecomposable rigid objects in $\mathcal{C}_M$. These objects have been classified by Rohleder \cite[Theorem 7.3]{Rohleder}. Besides the $3n$ objects of the form $T_{i,[a,b]}$ for $1 \leq i \leq n$ and $0 \leq a \leq b \leq 1$, these are (when viewed as elements in $C(1,\tau)$) the objects $M_{i,j}$
\begin{align*}
\bigoplus_{\genfrac{}{}{0pt}{}{i< r<j}{r \ {\rm odd}}} I_i \oplus \bigoplus_{\genfrac{}{}{0pt}{}{i< r<j}{r \ {\rm even}}} \tau(I_i) \xrightarrow{f} \bigoplus_{\genfrac{}{}{0pt}{}{i< r<j}{r \ {\rm odd}}} \tau(I_i)\oplus \bigoplus_{\genfrac{}{}{0pt}{}{i< r<j}{r \ {\rm even}}} \tau^2(I_i) 
\end{align*}
for $1< j \leq n$ where $f\vert_{\tau(I_i)}^{\tau(I_{i\pm1})}=1$ for all even $i$ and $f\vert_x^y=0$ for all other direct summands $X,Y$.
\end{remark}

\section{The quantized universal enveloping algebra}
\label{QuantUniv}

\subsection{Definition of the quantized enveloping algebra}
Let $v$ be an indeterminate. The quantized universal enveloping algebra $U_v(\mathfrak{g})$ is a deformation of the ordinary universal enveloping algebra $U(\mathfrak{g})$. To describe this construction we introduce quantized integers and quantized binomial coefficients. 

\begin{definition}
For a natural number $k$, denote by $\left[k \right] =\frac{v^k-v^{-k}}{v-v^{-1}}\in \mathbb{Q}\left( v\right)$ the quantum integer and by $[k]!=[k][k-1]\cdots[1]$ the quantized factorial. For two natural numbers $k$ and $l$, define the quantum binomial coefficient by $$\genfrac[]{0cm}{0}{k}{l}=\frac{\left[k \right]!}{\left[l \right]!\left[k-l \right]!} \in \mathbb{Q}\left( v\right).$$
\end{definition}

\begin{remark}
Both $[k]$ and $\genfrac[]{0cm}{1}{n}{k}$ are actually Laurent polynomials in $v$. If we specialize $v=1$, then $[k]=k$, $\genfrac[]{0cm}{1}{n}{k}=\genfrac(){0cm}{1}{n}{k}$, and $[k]!=k!$. Some authors such as Kac-Cheung \cite{KC:QuantumCalculus} use a different convention for quantum integers.
\end{remark}

\begin{definition} The {\it quantized enveloping algebra} $U_v(\mathfrak{g})$ is the $\mathbb{Q}(v)$-algebra generated by $E_i,F_i,K_i,K_i^{-1}$ for $i=1,2,\ldots,n$, subject to the following relations
\begin{align}
&K_iK_j =K_jK_i, && (i \neq j) \\
&K_iK_i^{-1} = K_i^{-1}K_i = 1, && (i=1,2,\ldots,n)  \\
&K_iE_jK_i^{-1} = v^{a_{ij}}E_j, && (1 \leq i,j \leq n) \label{nice}\\
&K_iF_jK_i^{-1} = v^{-a_{ij}}F_j, && (1 \leq i,j \leq n) \\
&E_iF_j -F_jE_i = \delta_{ij}\frac{K_i-K_i^{-1}}{v-v^{-1}},&& (1 \leq i,j \leq n) \label{commu}\\
&E_i^2E_j-[2]E_iE_jE_i+E_jE_i^2=0, && \vert i-j\vert=1,\label{qs1}\\
&F_i^2F_j-[2]F_iF_jF_i+F_jF_i^2=0, && \vert i-j\vert=1,\label{qs2}\\
&E_iE_j=E_jE_i, && \vert i-j\vert\geq2,\label{qcom1}\\
&F_iF_j=F_jF_i, && \vert i-j\vert\geq2,\label{qcom2}
\end{align}
where $\delta_{ij}$ is the Kronecker delta function. Note that $[2]=v+v^{-1}$, so we may write equation (\ref{qs1}) as $E_i^2E_j-(v+v^{-1})E_iE_jE_i+E_jE_i^2=0$.
\end{definition}

\begin{definition}
The subalgebra generated by $E_i$ for $i=1,2,\ldots,n$ is called the {\it quantized enveloping algebra} $U_v(\mathfrak{n})$.
\end{definition}

The only relations in $U_v(\mathfrak{n})$ remain (\ref{qs1}) and (\ref{qcom1}). These are called {\it quantized Serre relations}. The algebra $U_v(\mathfrak{n})$ specializes to $U(\mathfrak{n})$ in the limit $v=1$. 

\begin{remark}
The algebra $U_v(\mathfrak{g})$ is a {\it graded algebra}. It is graded by the root lattice $R$ if we set ${\rm deg}(E_i)=\alpha_i$, ${\rm deg}(F_i)=-\alpha_i$, and ${\rm deg}(K_i)=0$ for all $1 \leq i \leq n$. Note that ${\rm deg}(A) \in R^+$ for all $A \in U_v(\mathfrak{n})$. We also use the abbreviation ${\rm deg}(A)=\vert A \vert$ for $A \in U_v(\mathfrak{n})$.
\end{remark}

\begin{remark} Put $\sigma(v)=v^{-1}$ and $\sigma(E_i)=E_i$ for all $i$ with $1 \leq i \leq n$. By the symmetry of the relations (\ref{qs1}) and (\ref{qcom1}) in $U_v(\mathfrak{n})$ the map $\sigma$ extends to an algebra antihomomorphism $\sigma \colon U_v(\mathfrak{n}) \to U_v(\mathfrak{n})$, i.e., a $\mathbb{Q}$-linear map $\sigma \colon U_v(\mathfrak{n}) \to U_v(\mathfrak{n})$ such that $\sigma(AB)=\sigma(B)\sigma(A)$ for all $A,B \in U_v(\mathfrak{n})$. By construction $\sigma$ is an antiautomorphism and an involution, i.e., $\sigma^2(A)=A$ for all $A \in U_v(\mathfrak{n})$. 
\end{remark}

\begin{remark}
In literature the deformation parameter $v$ is sometimes called $q$. There are also different sign conventions for the exponent of the deformation parameter. We adopt Lusztig's convention \cite{Lusztig:QuantumGroups}. It matches Leclerc's usage \cite{Leclerc:Shuffles} if we set $q=v^{-1}$. 
\end{remark}

\begin{remark}
The quantized enveloping algebra $U_v(\mathfrak{g}')=U_v(\mathfrak{sl}_n)$ associated with $Q'$ is defined similarly and may be regarded as the subalgebra of $U_v(\mathfrak{g})=U_v(\mathfrak{sl}_{n+1})$ generated by the elements $E_i$, $F_i$, $K_i$, $K_i^{-1}$ for $1 \leq i \leq n-1$. 
\end{remark}

\subsection{The subalgebra $U_v^+(w)$ and the Poincar\'{e}-Birkhoff-Witt basis}
We introduce Lusztig's T-automorphisms. For $1 \leq i \leq j$ put 
\begin{align*}
&T_i(E_j)=\begin{cases}
-K_i^{-1}F_i, &{\rm if} \ i=j;\\
E_jE_i-v^{-1}E_iE_j, &{\rm if} \ \vert i-j \vert =1;\\
E_j, &{\rm if} \ \vert i-j \vert \geq 2;
\end{cases}\\
&T_i(F_j)=\begin{cases}
-E_iK_i, &{\rm if} \ i=j;\\
F_iF_j-vF_jF_i, &{\rm if} \ \vert i-j \vert =1;\\
F_j, &{\rm if} \ \vert i-j \vert \geq 2;
\end{cases}\\
&T_i(K_j)=K_jK_i^{-a_{ij}}.
\end{align*}
Lusztig \cite[Chapter 37]{Lusztig:QuantumGroups} shows that every $T_i$ can be extended to an $\mathbb{Q}(v)$-algebra homomorphism $T_i \colon U_v(\mathfrak{g}) \to U_v(\mathfrak{g})$. (In Lusztig's book \cite{Lusztig:QuantumGroups} it is called $T'_{i,-1}$.) In fact, every $T_i$ is an $\mathbb{Q}(v)$-algebra automorphism. The images of the generators of $U_v(\mathfrak{g})$ under the inverse $T_i^{-1}$ are given by
\begin{align*}
&T_i^{-1}(E_j)=\begin{cases}
-F_iK_i, &{\rm if} \ i=j;\\
E_iE_j-v^{-1}E_jE_i, &{\rm if} \ \vert i-j \vert =1;\\
E_j, &{\rm if} \ \vert i-j \vert \geq 2;
\end{cases}\\
&T_i^{-1}(F_j)=\begin{cases}
-K_i^{-1}E_i, &{\rm if} \ i=j;\\
F_jF_i-vF_iF_j, &{\rm if} \ \vert i-j \vert =1;\\
F_j, &{\rm if} \ \vert i-j \vert \geq 2;
\end{cases}\\
&T_i^{-1}(K_j)=K_jK_i^{-a_{ij}}.
\end{align*}

\begin{remark} If $g \in U_v(\mathfrak{g})$ is homogeneous of degree $\beta$, then $T_i(g)$ is homogeneous of degree $s_i(\beta)$. \end{remark}

\begin{remark} Furthermore, the $T_i$ satisfy {\it braid relations}. For brevity we write $T_iT_j$ for $T_i \circ T_j$ for all $i,j \in Q_0$. The braid relations are
\begin{align*}
T_iT_j&=T_jT_i, &&{\rm if} \ \vert i-j \vert \geq 2;\\
T_iT_jT_i&=T_jT_iT_j, &&{\rm if} \ \vert i-j \vert =1.
\end{align*}
\end{remark}

\begin{definition} To $w=s_1s_3s_5\cdots s_{n}s_2s_4s_6 \cdots s_{n-1}s_1s_3s_5 \cdots s_{n}s_2s_4s_6 \cdots s_{n-1}$ we attach elements in $U_v(\mathfrak{g})$. If $j_1,j_2,\ldots,j_{2n}\in [1,n]$ are indices such that for the reduced expression from above we have $w=s_{j_1}s_{j_2}\cdots s_{j_{2n}}$, then we consider the elements $T_{j_1}T_{j_2}\cdots T_{j_{k-1}}(E_{j_k})$ for $1 \leq k \leq 2n$. Since ${\rm deg}(T_{j_1}T_{j_2}\cdots T_{j_{k-1}}(E_{j_k}))=s_{j_1}s_{j_2} \cdots s_{j_{k-1}}(\alpha_{j_k})=\beta_k$ for all $k$, we introduce the shorthand notation $E(\beta_k)=T_{j_1}T_{j_2}\cdots T_{j_{k-1}}(E_{j_k})$ for all $1 \leq k \leq 2n$. Here, $E(\beta_1)$ is assumed to be $E(\beta_1)=E_{j_1}$.
\end{definition}

\begin{definition}
For $i \in Q_0$ and $a \in \mathbb{N}$ put $E_i^{(a)}=\frac{1}{[a]!}E_i^a \in U_v(\mathfrak{n})$. For a natural number $k$ with $1 \leq k \leq 2n$ and $a \in \mathbb{N}$ put $$E^{(a)}(\beta_k)=T_{j_1}T_{j_2}\cdots T_{j_{k-1}}(E_{j_k}^{(a)})=\frac{1}{[a]!}E(\beta)^a.$$
\end{definition}

The following theorem is due to Lusztig \cite[Theorem 40.2.1]{Lusztig:QuantumGroups}. It also contains the definition of the subalgebra $U_v^+(w)$ which is crucial for our further studies; moreover, it enables us to define the Poincar\'{e}-Birkhoff-Witt basis of $U_v^+(w)$. For an idea of a proof different from Lusztig's \cite{Lusztig:QuantumGroups} see Bergman's diamond lemma \cite{Bergman:Diamond}.

\begin{theorem}
\label{ExistenceOfPBW}
The set $$\mathcal{P}=\left\lbrace  E^{(a_1)}(\beta_1)E^{(a_2)}(\beta_2)\cdots E^{(a_{2n})}(\beta_{2n}) \colon (a_1,a_2,\ldots,a_{2n})\in \mathbb{N}^{2n}\right\rbrace$$ is linearly independent over the field $\mathbb{Q}(v)$. It forms a $\mathbb{Q}(v)$-basis of a $\mathbb{Q}(v)$-subalgebra $U_v^+(w)\subset U_v(\mathfrak{n})$. Moreover, $U_v^+(w)$ is well-defined in the sense that it is independent of the choice of the reduced expression for $w$. If we choose another reduced expression $w=s_{j'_1}s_{j'_2}\cdots s_{j'_{2n}}$ for $w$, then the set of all $$E_{j'_1}^{(a_1)} \cdot T_{j'_1}(E_{j'_2}^{(a_2)}) \cdots T_{j'_1}T_{j'_2}\cdots T_{j'_{2n-1}}(E_{j'_{2n}}^{(a_{2n})})$$ for all sequences $(a_1,a_2,\ldots,a_{2n}) \in \mathbb{N}^{2n}$ is also a basis of the same subalgebra $U_v^+(w)\subset U_v(\mathfrak{n})$.
\end{theorem}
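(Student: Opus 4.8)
The plan is to construct the algebra $U_v^+(w)$ directly out of the $2n$ root vectors $E(\beta_k)$ and then identify $\mathcal{P}$ as its basis of ordered monomials, leaving the question of well-definedness to the end. \textbf{Step 1 (the root vectors lie in $U_v(\mathfrak{n})$).} I would first verify that each $E(\beta_k)=T_{j_1}\cdots T_{j_{k-1}}(E_{j_k})$ actually lies in $U_v(\mathfrak{n})\subset U_v(\mathfrak{g})$, with no $F$- or $K$-component. Writing $E(\beta_k)=T_{j_1}(F)$ where $F=T_{j_2}\cdots T_{j_{k-1}}(E_{j_k})$ is the corresponding root vector for the shorter reduced word $s_{j_2}\cdots s_{j_{2n}}$ of $s_{j_1}w$, an induction on $\ell(w)$ reduces this to the standard fact (Lusztig \cite{Lusztig:QuantumGroups}) that $T_i$ maps the weight space $U_v(\mathfrak{n})_\gamma$ into $U_v(\mathfrak{n})_{s_i(\gamma)}$ whenever $\gamma$ and $s_i(\gamma)$ are both positive roots; this applies here because the degrees $\gamma=|F|$ and $s_{j_1}(\gamma)=\beta_k$ are positive roots, the given expression for $w$ being reduced. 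The degree rule for the $T_i$ then gives $|E(\beta_k)|=\beta_k$, so every element of $\mathcal{P}$ is homogeneous with $|E^{(a_1)}(\beta_1)\cdots E^{(a_{2n})}(\beta_{2n})|=a_1\beta_1+\cdots+a_{2n}\beta_{2n}$. \textbf{Step 2 (straightening relations and the subalgebra).} Next I would establish the straightening relations: for $1\le k<l\le 2n$ there is an integer $c_{kl}$ with
\[
E(\beta_l)E(\beta_k)-v^{c_{kl}}E(\beta_k)E(\beta_l)\;\in\;\sum \mathbb{Q}(v)\cdot E^{(c_{k+1})}(\beta_{k+1})\cdots E^{(c_{l-1})}(\beta_{l-1}),
\]
the sum running over all intermediate exponent tuples. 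Applying the automorphism $(T_{j_1}\cdots T_{j_{k-1}})^{-1}$ reduces this to the case $k=1$, which I would prove by induction on $l$; the genuinely new inputs are a short list of rank-$\le 2$ identities --- the commuting case $|j_1-j_l|\ge 2$, computed in $U_v(\mathfrak{sl}_2\times\mathfrak{sl}_2)$, and the case $|j_1-j_l|=1$, computed in $U_v(\mathfrak{sl}_3)$ --- verified directly from the explicit formulas for $T_i^{\pm 1}$. Granting the relations, the standard rewriting argument shows that every product of the $E(\beta_k)$ is a $\mathbb{Q}(v)$-combination of ordered monomials: replace each out-of-order adjacent factor $E(\beta_l)E(\beta_k)$, $l>k$, using the relation above, and argue by a Noetherian induction within the finite-dimensional graded piece $U_v(\mathfrak{n})_{a_1\beta_1+\cdots+a_{2n}\beta_{2n}}$. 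Since moreover $E^{(a)}(\beta_k)=\tfrac{1}{[a]!}E(\beta_k)^a$, it follows that $\operatorname{span}_{\mathbb{Q}(v)}(\mathcal{P})$ is exactly the subalgebra of $U_v(\mathfrak{n})$ generated by $E(\beta_1),\dots,E(\beta_{2n})$; this is the algebra $U_v^+(w)$.

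\textbf{Step 3 (linear independence, hence a basis).} To see $\mathcal{P}$ is linearly independent over $\mathbb{Q}(v)$ I would extend the given reduced word $s_{j_1}\cdots s_{j_{2n}}$ of $w$ to a reduced word $s_{j_1}\cdots s_{j_{2n}}s_{j_{2n+1}}\cdots s_{j_N}$ of the longest element $w_0\in W$, where $N$ is the number of positive roots of the root system; this is possible because $\ell(w^{-1}w_0)=\ell(w_0)-\ell(w)$, so a reduced word for $w$ followed by one for $w^{-1}w_0$ is again reduced. Running Steps 1--2 for this longer word and invoking Lusztig's PBW basis theorem for $U_v(\mathfrak{n})$ itself (the case $w=w_0$), the full family $\{E^{(a_1)}(\beta_1)\cdots E^{(a_N)}(\beta_N):(a_1,\dots,a_N)\in\mathbb{N}^N\}$ is a $\mathbb{Q}(v)$-basis of $U_v(\mathfrak{n})$; restricting to tuples with $a_{2n+1}=\cdots=a_N=0$ gives precisely $\mathcal{P}$, which is therefore linearly independent. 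Combined with Step 2, $\mathcal{P}$ is a $\mathbb{Q}(v)$-basis of $U_v^+(w)$. (Alternatively, as suggested in the statement, one feeds the relations of Step 2 into Bergman's diamond lemma \cite{Bergman:Diamond} to see that ordered monomials form a basis of the abstractly presented algebra, and then combines this with the evident surjection of that algebra onto $\operatorname{span}(\mathcal{P})$ together with a non-degeneracy input; the reduction to the $w_0$-case above is the shortest route.)

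\textbf{Step 4 (independence of the reduced expression).} It remains to show that $U_v^+(w)$ does not depend on the chosen reduced word. By the word property of Coxeter groups, any two reduced words for $w$ are connected by elementary braid moves $s_is_j\leftrightarrow s_js_i$ ($|i-j|\ge 2$) and $s_is_js_i\leftrightarrow s_js_is_j$ ($|i-j|=1$), so it suffices to handle a single move. Let $u=s_{j_1}\cdots s_{j_{p-1}}$ be the common prefix preceding the move; using $T_iT_j=T_jT_i$ and $T_iT_jT_i=T_jT_iT_j$ one sees that the two words yield the same automorphisms at every position after the moved block, hence the same root vectors there, and obviously the same root vectors before it. Inside the block, the two multisets of root vectors are $\{T_u(E_i),T_u(E_j)\}$, listed in opposite orders (and these two elements commute, since $E_iE_j=E_jE_i$), in the commuting case; and $\{T_u(E_i),\,T_u(E_j),\,T_u(E_jE_i-v^{-1}E_iE_j)\}$ versus $\{T_u(E_i),\,T_u(E_j),\,T_u(E_iE_j-v^{-1}E_jE_i)\}$ in the braided case, using $T_i(E_j)=E_jE_i-v^{-1}E_iE_j$ and $T_iT_j(E_i)=E_j$ for $|i-j|=1$. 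Since in $U_v(\mathfrak{sl}_3)$ both $E_jE_i-v^{-1}E_iE_j$ and $E_iE_j-v^{-1}E_jE_i$ lie in $\operatorname{span}\{E_iE_j,\,E_jE_i\}$, either of them together with $E_i,E_j$ generates a subalgebra containing the other; applying the algebra automorphism $T_u$, the two generating systems for the two words generate the \emph{same} subalgebra. Hence $U_v^+(w)$ is well-defined, and the re-indexed family attached to a second reduced word $s_{j'_1}\cdots s_{j'_{2n}}$ --- which is just $\mathcal{P}$ for that word --- is again a basis of it. (Conceptually, both the subalgebra property and the independence of the reduced word are subsumed by Lusztig's intrinsic description of $U_v^+(w)$ as the intersection of $U_v(\mathfrak{n})$ with the image under the automorphism $T_w$ of the subalgebra generated by the $F_i$ and $K_i^{\pm 1}$: this is visibly a subalgebra, and visibly independent of the reduced word, since $T_w$ is.)

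I expect the main obstacle to be not a single deep point but the bookkeeping in Steps 2 and 4: one must pin down exactly which intermediate root vectors $E(\beta_m)$, $k<m<l$, can appear on the right-hand side of a straightening relation and control their multidegrees, so that the Noetherian rewriting genuinely terminates and the finitely many rank-$\le 2$ base cases really do account for every occurrence in the rank-$n$ situation; and the rank-$2$ identities themselves, though routine, must be computed carefully from the $T_i$-formulas to get the powers of $v$ right.
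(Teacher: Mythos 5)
First, be aware that the paper contains no proof of this theorem: it is quoted from Lusztig [Theorem 40.2.1], with only a one-line pointer to Bergman's diamond lemma as an alternative idea. Your proposal is therefore a reconstruction of Lusztig's argument rather than a competitor to anything in the paper, and Steps 3 and 4 are carried out correctly along the standard lines: linear independence by completing the reduced word for $w$ to one for $w_0$ and restricting the full Poincar\'e--Birkhoff--Witt basis of $U_v(\mathfrak{n})$, and independence of the reduced expression via the word property of Coxeter groups together with the explicit rank-$2$ description of the root vectors exchanged by a braid move.

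There are, however, two genuine gaps. The minor one is in Step 1: the ``standard fact'' you invoke is false as stated. In type $A_2$ take $\gamma=\alpha_1+\alpha_2$ and $x=E_1E_2\in U_v(\mathfrak{n})_\gamma$; then $\gamma$ and $s_1(\gamma)=\alpha_2$ are both positive roots, yet $T_1(E_1E_2)=(-K_1^{-1}F_1)(E_2E_1-v^{-1}E_1E_2)$ has a non-trivial $F_1$-component and does not lie in $U_v(\mathfrak{n})$. What Lusztig's induction actually uses is that $T_i$ maps a distinguished complement of $E_iU_v(\mathfrak{n})$ in $U_v(\mathfrak{n})$ (the kernel of a skew-derivation, essentially Kashiwara's $E_i'$) back into $U_v(\mathfrak{n})$, so the induction must carry the stronger statement that $F$ lies in that complement. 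The major gap is Step 2: the relation you write down is exactly the Levendorski\u{\i}--Soibelman lemma, which the paper itself treats as a separate non-trivial citation, and its proof is not an induction whose only new inputs are rank-$\le 2$ identities. The hard content is precisely the assertion that only the \emph{intermediate} root vectors $E(\beta_m)$ with $k<m<l$ occur on the right-hand side; this is usually derived either from the decomposition of $U_v(\mathfrak{n})$ via the skew-derivations combined with a characterization of $U_v(\mathfrak{n})\cap T_i(U_v(\mathfrak{n}))$, or from the intrinsic description $U_v^+(w)=U_v(\mathfrak{n})\cap T_w(U^{\le 0})$ that you mention only parenthetically. Without one of these inputs you do not even know that a product of two root vectors stays inside $\operatorname{span}(\mathcal{P})$, so the subalgebra claim --- the heart of the theorem --- is not established. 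Either import the Levendorski\u{\i}--Soibelman lemma as a black box (as the paper implicitly does by citing Lusztig), or make the skew-derivation argument explicit.
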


\begin{remark} The basis $\mathcal{P}$ is called the {\it Poincar\'{e}-Birkhoff-Witt basis} of $U_v^+(w)$ associated with the reduced expression. Unlike the canonical basis which we will define later the Poincar\'{e}-Birkhoff-Witt basis $\mathcal{P}$ depends on the choice of the reduced expression for $w$. Every choice of a reduced expression for $w$ induces a bijection between $\mathbb{N}^{2n}$ and a basis for $U_v^+(w)$. In this sense $\mathcal{P}$ is not canonical. The various bijections are called {\it Lusztig parametrizations}.
\end{remark}

\begin{remark} Theorem \ref{ExistenceOfPBW} particularly implies that $E(\beta_k) \in U_v(\mathfrak{n})$ for every $1 \leq k \leq 2n$ which is not abvious from the definition of the $T$-automorphisms. 
\end{remark}

For any vector $\textbf{a}=(a_1,a_2,\ldots,a_{2n}) \in \mathbb{N}^{2n}$ we introduce the shorthand notation $E[\textbf{a}]$ for $E^{(a_1)}(\beta_1)E^{(a_2)}(\beta_2)\cdots E^{(a_{2n})}(\beta_{2n})$. We also use a different notation for $E(\beta_k)$ with $1 \leq k \leq 2n$; namely we put
\begin{align*}
&u_i=T_1T_3T_5\cdots T_{i-2}(E_i), &&{\rm for \ odd} \ i \ {\rm with} \ 1 \leq i \leq n,\\
&v_i=T_1T_3T_5\cdots T_nT_2T_4\cdots T_{i-2}(E_i), &&{\rm for \ even} \ i \ {\rm with} \ 2 \leq i \leq n-1,\\
&w_i=TT_1T_3T_5\cdots T_{i-2}(E_i), &&{\rm for \ odd} \ i \ {\rm with} \ 1 \leq i \leq n,\\
&x_i=TT_1T_3T_5\cdots T_nT_2T_4\cdots T_{i-2}(E_i), &&{\rm for \ even} \ i \ {\rm with} \ 2 \leq  i \leq n-1,
\end{align*}
where $T=T_1T_3T_5\cdots T_nT_2T_4T_6\cdots T_{n-1}$. In what follows we use the convention $T_i={\rm id}_{U_v(\mathfrak{g})}$ for $i\notin Q_0$. Because of the braid relation $T_iT_j=T_jT_i$ for $\vert i-j \vert \geq 2$ and $T_i(E_j)=E_j$, $T_i(F_j)=F_j$, and $T_i(K_j)=K_j$ for $\vert i-j \vert \geq 2$ the formulae simplify to
\begin{align*}
&u_i=E_i,&&{\rm for \ odd} \ i \ {\rm s.t.} \ 1 \leq i \leq n,\\
&v_i=T_{i-1}T_{i+1}(E_i),&&{\rm for \ even} \ i \ {\rm s.t.} \ 2 \leq i \leq n-1,\\
&w_i=T_{i-2}T_iT_{i+2}T_{i-1}T_{i+1}(E_i),&&{\rm for \ odd} \ i \ {\rm s.t.} \ 1 \leq i \leq n,\\
&x_i=T_{i-3}T_{i-1}T_{i+1}T_{i+3}T_{i-2}T_iT_{i+2}T_{i-1}T_{i+1}(E_i),&&{\rm for \ even} \ i \ {\rm s.t.} \ 2 \leq i \leq n-1.
\end{align*}

Note that $w_i=Tu_i$ for all odd $i$ with $1 \leq i \leq n$ and $x_i=Tv_i$ for all even $i$ with $2 \leq i \leq n-1$.

\begin{remark}The degrees of these variables are in bijection with the dimension vectors of the indecomposable direct summands of the terminal module $T$, compare Figure \ref{fig:AR}. To be more precise, let $1\leq i\leq n$. If $i$ is odd, then we have $E(\beta_i)=u_i$ and $E(\beta_{n+i})=w_i$; if $i$ is even, then we have $E(\beta_i)=v_i$ and $E(\beta_{n+i})=x_i$.
\end{remark}

\begin{remark}
Similarly, we can associate elements $E(\beta_k) \in U_v(\mathfrak{n}') \subset U_v(\mathfrak{n})$ for $1 \leq k \leq 2(n-1)$ to the reduced expression $s_1s_3s_5\cdots s_{n-2}s_2s_4s_6 \cdots s_{n-1}$ $s_1s_3s_5 \cdots s_{n-2}$ $\cdot s_2s_4s_6 \cdots s_{n-1}$ of the Weyl group element $w'\in W'$. The elements generate an algebra $U_v^+(w')\subset U_v(\mathfrak{n}')$, and the set of all ordered products of the $E(\beta_k)$ is a Poincar\'{e}-Birkhoff-Witt basis of $U_v^+(w')$ just as above. Elements $u'_i,w'_i$ (for odd $i$ with $1\leq i \leq n-2$) and $v'_i,x'_i$ (for even $i$ with $2\leq i \leq n-1$) in $U_v(\mathfrak{n}')$ are defined analogously. Under the inclusion $U_v(\mathfrak{n}') \subset U_v(\mathfrak{n})$ they are literally the same as the corresponding elements except for
\begin{align*}
&v'_{n-1}=T_{n-2}(E_{n-1}),\\
&w'_{n-2}=T_{n-4}T_{n-2}T_{n-3}T_{n-1}(E_{n-2}),\\
&x'_{n-3}=T_{n-6}T_{n-4}T_{n-2}T_{n-5}T_{n-3}T_{n-1}T_{n-4}T_{n-2}(E_{n-3}),\\
&x'_{n-1}=T_{n-4}T_{n-2}T_{n-3}T_{n-1}T_{n-2}(E_{n-1}).
\end{align*}

\end{remark}

\subsection{The quantum shuffle algebra and Euler numbers}

Rosso \cite{Rosso} noted that we may embed the quantized enveloping algebra $U_v(\mathfrak{n}) \hookrightarrow \mathcal{F}$ into the \textit{quantum shuffle algebra} $\mathcal{F}$. Hence we may view $U_v(\mathfrak{n})$ as a subalgebra of $\mathcal{F}$. As we will see below $\mathcal{F}$ is defined in purely combinatorial terms. As Leclerc \cite[Section 2.5, 2.6]{Leclerc:Shuffles} observes, the embedding $U_v(\mathfrak{n}) \hookrightarrow \mathcal{F}$ is very useful for explicit calculations. 

\begin{definition}
Let $r,s$ be natural numbers. A permutation $\pi\in S_{r+s}$ is called a {\it shuffle} of type $(r,s)$ if $\pi(1)<\pi(2)<\cdots<\pi(r)$ and $\pi(r+1)<\pi(r+2)<\cdots<\pi(r+s)$.
\end{definition}
The following definition is due to Leclerc \cite[Section 2.5]{Leclerc:Shuffles}.

\begin{definition}
For every sequence $(i_1,i_2,\ldots,i_r) \in Q_0^r$ of elements in $Q_0$ of length $r\geq 0$ define a symbol $w[i_1,i_2,\ldots,i_r]$. (Especially, we have a symbol $w[\ ]$ for the empty sequence.) Let $\mathcal{F}$ be the $\mathbb{Q}(v)$-vector space generated by all $w[i_1,i_2,\ldots,i_r]$ for all $r \geq 0$. Define the {\it quantum shuffle product} on two basis elements by
\begin{align*}
w[i_1,i_2,\ldots,i_r]*w[i_{r+1},i_{r+2},\ldots,i_s]=\sum_{\genfrac{}{}{0pt}{}{\pi {\rm \ shuffle}}{{\rm of \ type \ }(r,s)}}v^{e(\pi)}w[i_{\pi(1)},i_{\pi(2)},\ldots,i_{\pi(r+s)}],
\end{align*}
where the function $e \colon S_{r+s} \to \mathbb{Z}$ is defined as $$e(\pi)=\sum_{k \leq r < l,\pi(k)<\pi(l)}(\alpha_{i_{\pi{k}}},\alpha_{i_{\pi(l)}}).$$ It is easy to see that the product is associative. We extend the product bilinearly to a map $*\colon \mathcal{F} \times \mathcal{F} \to \mathcal{F}$. The algebra $(\mathcal{F},*)$ is called the {\it quantum shuffle algebra}.
\end{definition}

\begin{remark}
The shuffle product $*$ on the quantum shuffle algebra $\mathcal{F}$ is not commutative. Hence, $\mathcal{F}$ is a non-commutative $\mathbb{Q}(v)$-algebra, but it degenerates to the classical commutative shuffle algebra when we specialize $v=1$. Furthermore, the quantum shuffle algebra $\mathcal{F}$ is graded by the root lattice if we set ${\rm deg}(w[i])=\alpha_i$ for all $i \in \left\lbrace 1,2,\ldots,n\right\rbrace$. 
\end{remark}

\begin{lemma}
The map $E_i \to w[i]$ extends to an embedding of graded algebras $U_v(\mathfrak{n}) \hookrightarrow \mathcal{F}$. In other words, $U_v(\mathfrak{n})$ is isomorphic to the subalgebra of $\mathcal{F}$ generated by all $w[i]$ for $i \in Q_0$.
\end{lemma}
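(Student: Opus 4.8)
The plan is to exhibit a concrete algebra homomorphism $\Phi\colon U_v(\mathfrak{n})\to(\mathcal{F},*)$ sending $E_i$ to $w[i]$, and then argue separately that it is well-defined, graded, and injective. First I would check that the assignment $E_i\mapsto w[i]$ respects the defining relations of $U_v(\mathfrak{n})$, namely the quantized Serre relations (\ref{qs1}) and the commutation relations (\ref{qcom1}). For the commutation relation with $|i-j|\ge 2$ one computes $w[i]*w[j]$ directly from the definition: there are two shuffles of type $(1,1)$, contributing $v^{(\alpha_i,\alpha_j)}w[i,j]$ and $v^{0}w[j,i]$; since $(\alpha_i,\alpha_j)=a_{ij}=0$ in this case, the product is $w[i,j]+w[j,i]$, which is manifestly symmetric in $i$ and $j$, so $w[i]*w[j]=w[j]*w[i]$. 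For the Serre relation one must compute $w[i]*w[i]*w[j]$, $w[i]*w[j]*w[i]$, and $w[j]*w[i]*w[i]$ for $|i-j|=1$, keeping careful track of the exponents $e(\pi)$ built from $(\alpha_i,\alpha_i)=2$ and $(\alpha_i,\alpha_j)=-1$; one finds that the alternating sum with the quantum integer $[2]=v+v^{-1}$ in the middle term telescopes to zero. (One can first compute $w[i]*w[i]=(v^2+v^{-2}+\text{\dots})\,$—more precisely $w[i]*w[i]=[2]\,w[i,i]$ after accounting for $e(\pi)=(\alpha_i,\alpha_i)=2$ for the one non-identity shuffle, giving $(1+v^{2})w[i,i]$, which after normalization is $[2]w[i,i]$ up to a global power of $v$—so it suffices to check the relation on the level of the ordered monomials $w[i,i,j]$, $w[i,j,i]$, $w[j,i,i]$). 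This shows the assignment extends to a $\mathbb{Q}(v)$-algebra homomorphism $\Phi$, and by construction it preserves the $R$-grading since $\deg w[i]=\alpha_i=\deg E_i$.

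Second, I would establish injectivity. The cleanest route is to invoke the fact, due to Rosso \cite{Rosso} and explained by Leclerc \cite[Section 2.5, 2.6]{Leclerc:Shuffles}, that $\Phi$ is dual (under the natural pairings) to the comultiplication map realizing $U_v(\mathfrak{n})$ inside the restricted dual of its own, so that the kernel of $\Phi$ is the radical of Drinfeld's (or Lusztig's) bilinear form on $U_v(\mathfrak{n})$; since that form is known to be non-degenerate on $U_v(\mathfrak{n})$, the kernel is zero. Alternatively, and perhaps more self-containedly, one argues by induction on the degree $\beta\in R^+$: in degree $\beta=\sum d_i\alpha_i$ the dimension of the $\beta$-graded piece of $U_v(\mathfrak{n})$ is the number of ways of writing $\beta$ as an ordered sum matching the Poincaré–Birkhoff–Witt combinatorics from Theorem \ref{ExistenceOfPBW} (applied to a long enough Weyl group element, or to $U_v(\mathfrak{n})$ directly), and one exhibits explicit PBW monomials whose images under $\Phi$ are triangular with respect to a suitable ordering of the words $w[i_1,\dots,i_r]$ of content $\beta$; triangularity forces linear independence of the images, hence injectivity in that degree.

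The main obstacle I anticipate is the injectivity argument rather than the relation-checking, which is a bounded finite computation. Proving non-degeneracy of the relevant bilinear form from scratch would reproduce a substantial chunk of Lusztig's or Kashiwara's theory, so in practice the proof should simply cite Rosso \cite{Rosso} (and Leclerc \cite{Leclerc:Shuffles}) for the embedding statement, with the verification of the Serre and commutation relations written out as the one piece of genuine content. A secondary, purely bookkeeping, subtlety is normalization: depending on the sign convention for the deformation parameter (the excerpt's Remark notes that Lusztig's $v$ corresponds to Leclerc's $q=v^{-1}$), the map $E_i\mapsto w[i]$ may need to be adjusted by replacing $v$ with $v^{-1}$ in the definition of $e(\pi)$, or by rescaling generators; I would state the convention explicitly before doing the computation so that the exponents $v^{e(\pi)}$ match the coefficients $[2]=v+v^{-1}$ appearing in (\ref{qs1}).

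Thus the proof reduces to: (i) verify $w[i]*w[j]=w[j]*w[i]$ for $|i-j|\ge 2$ and the quantized Serre identity for the $w[i]$ when $|i-j|=1$, both by direct expansion of the shuffle product; (ii) conclude that $\Phi\colon E_i\mapsto w[i]$ is a well-defined graded $\mathbb{Q}(v)$-algebra homomorphism; (iii) cite Rosso \cite{Rosso} / Leclerc \cite[Section 2.6]{Leclerc:Shuffles} (equivalently, non-degeneracy of the bilinear form) for $\ker\Phi=0$, so that $\Phi$ identifies $U_v(\mathfrak{n})$ with the subalgebra of $\mathcal{F}$ generated by the $w[i]$.
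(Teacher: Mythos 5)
Your proposal is correct and ultimately rests on the same foundation as the paper, which proves this lemma by a bare citation of Leclerc's Theorem 4 (itself going back to Rosso); your step (iii) is exactly that citation, and your steps (i)--(ii) merely make explicit the relation-checking that the cited theorem already contains. The extra detail is sound (including the normalization $(1+v^2)w[i,i]=v[2]\,w[i,i]$ and the caveat about the $q=v^{-1}$ convention), so nothing is missing relative to the paper's own treatment.
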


\begin{proof}
See Leclerc \cite[Theorem 4]{Leclerc:Shuffles}.
\end{proof}

From now on we view $U_v(\mathfrak{n})$ as a subalgebra of $\mathcal{F}$. In the rest of the section we expand the generators $u_i,v_i,w_i,x_i \in U_v^+(w)$ in the shuffle basis. The following elements will be important for the description.

\begin{definition}
For integers $i,j$ such that $1 \leq i \leq j \leq n$ put $$X_{i,j}=T_i^{(-1)^{i-1}}T_{i+1}^{(-1)^{i}}\cdots T_{j-1}^{(-1)^{j-2}}(E_j).$$
By definition, $X_{i,j} \in U_v(\mathfrak{n}) \subset \mathcal{F}$.
\end{definition}

\begin{example}
Let us give some examples of $X_{i,j}$ expanded in the shuffle basis: First of all, we have $X_{1,1}=E_1=u_1=w[1]$. Moreover,
\begin{align*}
X_{1,2}&=T_1(E_2)=E_2E_1-v^{-1}E_1E_2=w[2]*w[1]-v^{-1}w[1]*w[2]\\
&=w[1,2]+v^{-1}w[2,1]-v^{-1}\Big(w[2,1]+v^{-1}w[1,2]\Big)=(1-v^{-2})w[1,2]
\end{align*}
is a second example.
\end{example}

The next lemma shows that we can compute the expansion of $X_{i,j}$ for all pairs $(i,j)$ in the shuffle basis explicitly.
\begin{lemma}
\label{EulerShuffle}
Let $i,j$ be integers such that $1 \leq i \leq j \leq n$. Then
\begin{align*}
X_{i,j}=(1-v^{-2})^{j-i}\sum_{\pi}w[\pi(i),\pi(i+1),\ldots,\pi(j)]
\end{align*}
where the sum runs over all permutations $\pi$ of $\left\lbrace i,i+1,\ldots,j\right\rbrace$ such that for every even number $k$ with $i \leq k \leq j-1$ we have $\pi^{-1}(k)>\pi^{-1}(k+1)$ and for every even number $k$ with $i+1 \leq k \leq j$ we have $\pi^{-1}(k)>\pi^{-1}(k-1)$. 
\end{lemma}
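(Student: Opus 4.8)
The natural approach is induction on $j-i$, peeling off the outermost automorphism. The base case $j=i$ is immediate since $X_{i,i}=E_i=w[i]$ and the empty product of factors $(1-v^{-2})^0=1$ with the single identity permutation gives exactly $w[i]$. For the inductive step I would write
\begin{align*}
X_{i,j}=T_i^{(-1)^{i-1}}\bigl(X_{i+1,j}\bigr),
\end{align*}
assume the formula for $X_{i+1,j}$ (a permutation sum over $\{i+1,\ldots,j\}$ with the stated parity conditions, prefactor $(1-v^{-2})^{j-i-1}$), and then compute the effect of $T_i^{\pm1}$ in the shuffle model. The point is that $T_i^{\pm1}(E_{i+1})$ and $T_i^{\pm1}(E_i)$ are simple two-letter shuffle expressions: for instance $T_1(E_2)=(1-v^{-2})w[1,2]$ as in the worked example, and more generally $T_i^{\pm1}$ applied to an element of the subalgebra generated by $E_i,E_{i+1},\ldots$ amounts (on the relevant degrees) to a controlled shuffle with a single extra letter $i$, producing the prefactor $(1-v^{-2})$ and inserting the index $i$ subject to exactly the new parity constraint relating $\pi^{-1}(i)$ to $\pi^{-1}(i+1)$.

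The cleaner route, which avoids manipulating $T_i$ directly inside the shuffle algebra, is to verify the claimed closed formula by checking it satisfies the defining recursion $X_{i,j}=T_i^{(-1)^{i-1}}(X_{i+1,j})$. For this I would first establish a small lemma: if $Y\in U_v(\mathfrak n)$ is a linear combination of monomials $w[\mathbf{k}]$ in the letters $\{i+1,\ldots,j\}$, then $T_i^{\varepsilon}(Y)$ (for the appropriate sign $\varepsilon=(-1)^{i-1}$, using that $T_i$ commutes with $T_\ell$ and fixes $E_\ell$ for $|\ell-i|\ge 2$, so only the letter $i+1$ interacts with $i$) is obtained by replacing each $w[\mathbf{k}]$ by a sum over insertions of the letter $i$, with a factor $(1-v^{-2})$ and a sign/parity condition on where $i$ can be placed relative to the (single, by the parity hypothesis) occurrence of $i+1$. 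Tracking the two parity conditions carefully — $\pi^{-1}(k)>\pi^{-1}(k+1)$ for even $k$, and $\pi^{-1}(k)>\pi^{-1}(k-1)$ for even $k$ — one sees that adding the letter $i$ at the front adds precisely the new constraint, and that the parities line up because $i$ and $i+1$ have opposite parity, so exactly one of the two conditions (namely the one naming the even member of $\{i,i+1\}$) becomes active.

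\textbf{Main obstacle.} The genuinely delicate part is bookkeeping the interaction of the letter $i$ with the letter $i+1$ under $T_i^{\pm1}$ in the shuffle algebra and confirming that the sign $(-1)^{i-1}$ in the exponent of $T_i$ is exactly what is needed to produce the correct one of the two parity inequalities (rather than, say, $\pi^{-1}(i)<\pi^{-1}(i+1)$). Concretely one must compute $T_i^{\varepsilon}(w[i+1,\ldots])$ using $T_i(E_{i+1})=E_{i+1}E_i-v^{-1}E_iE_{i+1}$ versus $T_i^{-1}(E_{i+1})=E_iE_{i+1}-v^{-1}E_{i+1}E_i$, expand both sides as shuffles, and see how the quantum shuffle coefficients $v^{e(\pi)}$ (with $(\alpha_i,\alpha_{i+1})=-1$) collapse the $(r,s)$-shuffle sum of a product $w[i]*w[\text{word}]$ to a sum of single-letter insertions with coefficient $(1-v^{-2})$ and the claimed positional restriction; the telescoping of these coefficients is the computational heart, and getting the direction of the inequality right is where sign errors would enter. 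Everything else — the prefactor power $(1-v^{-2})^{j-i}$ accumulating one factor per induction step, and the fact that the parity hypothesis guarantees each $w[\mathbf{k}]$ in $X_{i+1,j}$ has a single occurrence of each letter so the insertion description stays clean — is routine once that lemma is in place.
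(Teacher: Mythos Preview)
Your plan is correct in spirit and essentially mirror-symmetric to the paper's proof: both are inductions on $j-i$, peeling off one letter and computing a shuffle with a single extra factor. The paper, however, peels from the $j$ end rather than the $i$ end, and the key simplification it makes explicit---which you circle around but never state cleanly---is to first rewrite the recursion as a $v$-commutator entirely inside $U_v(\mathfrak n)$:
\[
X_{i,j}=\begin{cases}
E_jX_{i,j-1}-v^{-1}X_{i,j-1}E_j,&j\text{ even},\\
X_{i,j-1}E_j-v^{-1}E_jX_{i,j-1},&j\text{ odd}.
\end{cases}
\]
This follows because $T_k$ fixes $E_j$ for $k\le j-2$, so only the innermost $T_{j-1}^{\pm 1}(E_j)$ contributes. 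Once one has this, the induction step is a direct shuffle computation of $w[j]*w[\pi(i),\dots,\pi(j-1)]$ and its reverse: since $j$ pairs nontrivially only with $j-1$, each insertion of $j$ carries coefficient $1$ or $v^{-1}$ according to whether $j$ lands after or before $j-1$, and the $v$-commutator kills the wrong-side insertions and leaves $(1-v^{-2})$ times the correct ones.

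Your version from the $i$ end works equally well and yields the symmetric recursion $X_{i,j}=X_{i+1,j}E_i-v^{-1}E_iX_{i+1,j}$ for $i$ odd (reversed for $i$ even), since each $E$-monomial in $X_{i+1,j}$ contains exactly one $E_{i+1}$ and $E_i$ commutes with all other factors. The one genuine wobble in your write-up is the ``small lemma'' as phrased: $T_i$ is not defined on individual shuffle words $w[\mathbf{k}]$ (they need not lie in $U_v(\mathfrak n)$), so ``applying $T_i^\varepsilon$ to each $w[\mathbf{k}]$'' is not literally meaningful. What makes your computation legitimate is precisely the $v$-commutator formula above, which \emph{is} linear over shuffle words; you should state that formula first and then do the word-by-word shuffle calculation, rather than speaking of $T_i$ acting on shuffles.
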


\begin{proof}
By backwards induction on $i$ we see that the $X_{i,j}$ (for $1 \leq i < j \leq n$) satisfy the following recursion:
\begin{align*}
X_{i,j}=\begin{cases}
E_jX_{i,j-1}-v^{-1}X_{i,j-1}E_j, &{\rm if \ } j {\rm \ is \ even};\\
X_{i,j-1}E_j-v^{-1}E_jX_{i,j-1}, &{\rm if \ } j {\rm \ is \ odd}.\\
\end{cases}
\end{align*}

Now fix $i$. We proceed by induction on $j-i$. The statement is trivial for $j=i$. Suppose that $j>i$ and that $X_{i,j-1}=(1-v^{-2})^{j-1-i}\sum_{\pi}w[\pi(i),\pi(i+1),\cdots,\pi(j-1)],$ where the sum is taken over all permutations of $\left\lbrace i,i+1,\ldots,j-1\right\rbrace$ such that for every even number $k$ with $i \leq k \leq j-2$ we have $\pi^{-1}(k)>\pi^{-1}(k+1)$ and for every even number $k$ with $i+1 \leq k \leq j-1$ we have $\pi^{-1}(k)>\pi^{-1}(k-1)$. 

We consider two cases. First of all, assume that $j$ is even. Let $\pi$ be a permutation of $\left\lbrace i,i+1,\ldots,j-1\right\rbrace$ as above. When shuffling the sequence $(j)$ of length $1$ with the sequence $(\pi(i),\pi(i+1),\ldots,\pi(j-1))$ of length $j-i$, we get $j-i+1$ permutations of $\left\lbrace i,i+1,\ldots,j\right\rbrace$. Among these we distinguish two kinds of permutations. The permutations $\pi_1$ where $j$ comes after $j-1$ satisfy $\pi_1^{-1}(k)>\pi_1^{-1}(k+1)$ and $\pi_1^{-1}(k)>\pi_1^{-1}(k-1)$ for all even numbers $k$ such that $i \leq k \leq j-1$ or $i+1 \leq k \leq j$, respectively. Conversely, every permutation $\pi_1$ of $\left\lbrace i,i+1,\ldots,j\right\rbrace$ satisfying these conditions is uniquely obtained from shuffling $(j)$ with a such a sequence $(\pi(i),\pi(i+1),\ldots,\pi(j-1))$ such that $j$ comes after $j-1$.

We also get permutations $\pi_2$ where $j$ occurs before $j-1$. Now we see that 
\begin{align*}
&w[j]*w[\pi(i),\pi(i+1),\ldots,\pi(j-1)]\\&=\sum_{\pi_1}w[\pi_1(i),\ldots,\pi_1(j)]+v^{-1}\sum_{\pi_2}w[\pi_2(i),\ldots,\pi_2(j)],\\
&w[\pi(i),\pi(i+1),\ldots,\pi(j-1)]*w[j]\\&=v^{-1}\sum_{\pi_1}w[\pi_1(i),\ldots,\pi_1(j)]+\sum_{\pi_2}w[\pi_2(i),\ldots,\pi_2(j)].
\end{align*}
It follows by induction hypothesis that $$X_{i,j}=w[j]*X_{i,j-1}-v^{-1}X_{i,j-1}*w[j]=(1-v^{-2})^{j-i}\sum_{\pi_1}w[\pi_1].$$ The other case where $j$ is odd is proved similarly.
\end{proof}

\begin{remark}
The number $a(i,j)$ of permutations of $\left\lbrace i,i+1,\ldots,j-1\right\rbrace$ such that for every even number $k$ with $i \leq k \leq j-2$ we have $\pi^{-1}(k)>\pi^{-1}(k+1)$ and for every even number $k$ with $i+1 \leq k \leq j-1$ we have $\pi^{-1}(k)>\pi^{-1}(k-1)$ only depends on $j-i$. The table displays some values of $a(i,j)$. 

\begin{center}
\begin{tabular}{|c||c|c|c|c|c|c|c|}\hline
j-i&0&1&2&3&4&5&6\\ \hline
$a(i,j)$&1&1&2&5&16&61&272\\ \hline
\end{tabular}
\end{center}The numbers are called {\it Euler numbers}. The sequence of Euler numbers is listed as A000111 in Sloane's Encyclopedia of Integer Sequences \cite{Sloane}. Its exponential generating function is $\sec(x)+\tan(x)$, see Stanley \cite[Proposition 1.61]{Stan}.
\end{remark}

\begin{lemma}
\label{GeneratorsAreX}
The following formulae for the generators of $U_v^+(w)$ are valid:
\begin{align*}
&u_i=E_i, &&{\rm for \ odd \ } i {\rm \ with \ } 1 \leq i \leq n;\\
&v_i=T_{i-1}T_i^{-1}(E_{i+1}), &&{\rm for \ even \ } i {\rm \ with \ } 2 \leq i \leq n-2;\\
&w_1=T_2^{-1}(E_3);\\
&w_i=T_{i-2}T_{i-1}^{-1}T_iT_{i+1}^{-1}(E_{i+2}), &&{\rm for \ odd \ } i {\rm \ with \ } 3 \leq i \leq n-3;\\
&w_n=T_{n-2}(E_{n-1});\\
&x_2=T_2^{-1}T_3T_4^{-1}(E_5);\\
&x_i=T_{i-3}T_{i-2}^{-1}T_{i-1}T_i^{-1}T_{i+1}T_{i+2}^{-1}(E_{i+3}), &&{\rm for \ even \ } i {\rm \ with \ } 4 \leq i \leq n-4;\\
&x_{n-1}=T_{n-4}T_{n-3}^{-1}T_{n-2}(E_{n-1});
\end{align*}
\end{lemma}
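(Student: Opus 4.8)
The plan is to derive each identity from the simplified formulae for $u_i,v_i,w_i,x_i$ by repeated application of three elementary moves. The first is the set of braid relations for the $T_i$, together with their rearrangements such as $T_aT_bT_a^{-1}=T_b^{-1}T_aT_b$ for $|a-b|=1$ (obtained from $T_aT_bT_a=T_bT_aT_b$ by multiplying on the left by $T_b^{-1}$ and on the right by $T_a^{-1}$). The second is the pair of commutations $T_aT_b=T_bT_a$ and $T_a(E_b)=E_b$, valid whenever $|a-b|\ge 2$. The third is the identity
$$T_a(E_b)=T_b^{-1}(E_a)\qquad\text{whenever }|a-b|=1,$$
which is immediate on comparing the defining formulae of $T_a$ and of $T_b^{-1}$, since both sides equal $E_bE_a-v^{-1}E_aE_b$; applying $T_a$ to it and relabelling gives in addition $T_aT_b(E_a)=E_b$ for $|a-b|=1$. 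No further input is needed. In particular $u_i=E_i$ is already the simplified definition, and $v_i=T_{i-1}T_{i+1}(E_i)=T_{i-1}\bigl(T_i^{-1}(E_{i+1})\bigr)$ follows from the third move in one step.

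For the generic case of $w_i$ (with $i$ odd, $3\le i\le n-3$, so that all indices occurring lie in $Q_0$) I would argue as follows. Start from $w_i=T_{i-2}T_iT_{i+2}T_{i-1}T_{i+1}(E_i)$ and commute $T_{i+2}$ to the right past $T_{i-1}$; then rewrite the innermost part using the third move twice, $T_{i+2}T_{i+1}(E_i)=T_{i+2}T_i^{-1}(E_{i+1})=T_i^{-1}T_{i+2}(E_{i+1})=T_i^{-1}T_{i+1}^{-1}(E_{i+2})$. This puts $w_i$ in the form $T_{i-2}T_iT_{i-1}T_i^{-1}T_{i+1}^{-1}(E_{i+2})$; now apply the braid rearrangement $T_iT_{i-1}T_i^{-1}=T_{i-1}^{-1}T_iT_{i-1}$, and finally move the trailing $T_{i-1}$ to the right past $T_{i+1}^{-1}$ and past $E_{i+2}$ (both permissible, since $i-1$ is at distance $\ge 2$ from $i+1$ and from $i+2$). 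The result is $w_i=T_{i-2}T_{i-1}^{-1}T_iT_{i+1}^{-1}(E_{i+2})$. The generic case of $x_i$ (with $i$ even, $4\le i\le n-4$) follows from this: the calculation just given used nothing about the parity of $i$, so it applies verbatim to the inner block $T_{i-2}T_iT_{i+2}T_{i-1}T_{i+1}(E_i)$ of $x_i$, and two further rounds of the same three moves --- commuting $T_{i+3}$ rightwards using $T_{i+3}(E_{i+2})=T_{i+2}^{-1}(E_{i+3})$, then using $T_{i+1}T_iT_{i+1}^{-1}=T_i^{-1}T_{i+1}T_i$, then using $T_{i-1}T_{i-2}T_{i-1}^{-1}=T_{i-2}^{-1}T_{i-1}T_{i-2}$, each followed by commuting the now-surplus factor harmlessly past $E_{i+3}$ --- produce the claimed formula.

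The boundary cases $w_1,w_n,x_2,x_{n-1}$ I would treat by exactly the same moves, with the convention $T_k=\mathrm{id}$ for $k\notin Q_0$, so that the out-of-range factors simply disappear; for instance $w_1=T_1T_3T_2(E_1)=T_3\bigl(T_1T_2(E_1)\bigr)=T_3(E_2)=T_2^{-1}(E_3)$ and $w_n=T_{n-2}\bigl(T_nT_{n-1}(E_n)\bigr)=T_{n-2}(E_{n-1})$, while $x_2=T_1T_3T_5T_2T_4T_1T_3(E_2)$ and $x_{n-1}=T_{n-4}T_{n-2}T_nT_{n-3}T_{n-1}T_{n-2}T_n(E_{n-1})$ reduce to their stated forms after a few more steps of the same kind. (One can note in passing that each of these formulae simply asserts that the generator equals $X_{a,b}$, where $[a,b]$ is the interval of the corresponding indecomposable summand of the terminal module --- $v_i=X_{i-1,i+1}$, $w_i=X_{i-2,i+2}$, $x_i=X_{i-3,i+3}$ in the generic range, and the truncated versions $w_1=X_{2,3}$, $w_n=X_{n-2,n-1}$, $x_2=X_{2,5}$, $x_{n-1}=X_{n-4,n-1}$ at the boundary --- but this reformulation is not needed for the proof.) The only genuine difficulty is the bookkeeping: one must keep track of which pairs $T_a,T_b$ commute (equivalently $|a-b|\ge 2$) and apply the braid moves in the correct order, and because in the boundary cases the vanishing of out-of-range factors slightly changes the shape of the product, those four cases have to be verified individually rather than read off from the generic computation.
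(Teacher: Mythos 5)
Your proposal is correct and uses essentially the same ingredients as the paper's proof: the braid relations, commutativity of $T_a$ and $T_b$ (and $T_a(E_b)=E_b$) for $|a-b|\ge 2$, and the adjacent-index identity $T_a(E_b)=T_b^{-1}(E_a)$, which the paper packages as the key formula $T_iT_{i-1}T_{i+1}(E_i)=T_{i-1}^{-1}T_i(E_{i+1})=T_{i+1}^{-1}T_i(E_{i-1})$. Your reorganization (doing the generic $w_i$ first and reusing that computation as the inner block of $x_i$) is a tidy but inessential variant of the paper's case-by-case manipulation, and the steps you write out check out.
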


\begin{proof}
The equation $u_i=E_i$ for odd $i$ follows from definition. Note that by definition of Lusztig's $T$-automorphisms we have $T_{i+1}(E_i)=T_i^{-1}(E_{i+1})$ for $1 \leq i \leq n-1$ and that $T_{i-1}(E_i)=T_i^{-1}(E_{i-1})$ for $2 \leq i \leq n$. The first equation is equivalent to $T_iT_{i+1}(E_i)=E_{i+1}$, the second one is equivalent to $T_iT_{i-1}(E_i)=E_{i-1}$. Therefore, for even $i$ we have $v_i=T_{i-1}T_{i+1}(E_i)=T_{i-1}T_i^{-1}(E_{i+1})$.

For all further calculations the formula $$T_iT_{i-1}T_{i+1}(E_i)=T_{i-1}^{-1}T_i(E_{i+1})=T_{i+1}^{-1}T_i(E_{i-1})$$ which holds for $2 \leq i \leq n-1$ will be crucial. To verify the formula note that by the braid relation we have $T_{i-1}T_iT_{i-1}T_{i+1}(E_i)=T_iT_{i-1}T_iT_{i+1}(E_i)=T_iT_{i-1}(E_{i+1})=T_i(E_{i+1})$. Application of $T_{i-1}^{-1}$ gives the first part of the equation, the second part is proved analogously.

Now we compute $w_1=T_1T_3T_2(E_1)=T_3T_1T_2(E_1)=T_3(E_2)=T_2^{-1}(E_3)$. Similarly, $w_n=T_{n-2}T_nT_{n-1}(E_n)=T_{n-2}(E_{n-1})$. Furthermore, for odd $i$ with $3 \leq i \leq n-2$ we compute 
\begin{align*}
w_i&=T_{i-2}T_iT_{i+2}T_{i-1}T_{i+1}(E_i)=T_{i-2}T_{i+2}T_iT_{i-1}T_{i+1}(E_i)\\
&=T_{i-2}T_{i+2}T_{i-1}^{-1}T_i(E_{i+1})=T_{i-2}T_{i-1}^{-1}T_iT_{i+2}(E_{i+1})=T_{i-2}T_{i-1}^{-1}T_iT_{i+1}^{-1}(E_{i+2}).
\end{align*}

Moreover, we see that $x_2=T_1T_3T_5T_2T_4T_1T_3(E_2)=T_1T_3T_5T_4T_1^{-1}T_2(E_3)=T_3T_5T_4T_2(E_3)=T_5T_2^{-1}T_3(E_4)=T_2^{-1}T_3T_5(E_4)=T_2^{-1}T_3T_4^{-1}(E_5)$, and 
\begin{align*}
x_{n-1}&=T_{n-4}T_{n-2}T_nT_{n-3}T_{n-1}T_{n-2}T_n(E_{n-1})\\&=T_{n-4}T_{n-2}T_nT_{n-3}T_n^{-1}T_{n-1}(E_{n-2})\\
&=T_{n-4}T_{n-2}T_{n-3}T_{n-1}(E_{n-2})=T_{n-4}T_{n-3}^{-1}T_{n-2}(E_{n-1}).
\end{align*}
Finally, for even $i$ with $4 \leq i \leq n-3$, the equation
\begin{align*}
x_i&=T_{i-3}T_{i-1}T_{i+1}T_{i+3}T_{i-2}T_iT_{i+2}T_{i-1}T_{i+1}(E_i)\\
&=T_{i-3}T_{i-2}^{-1}T_{i-2}T_{i-1}T_{i-2}T_{i+1}T_{i+3}T_{i+2}T_iT_{i-1}T_{i+1}(E_i)\\
&=T_{i-3}T_{i-2}^{-1}T_{i-1}T_{i-2}T_{i-1}T_{i+1}T_{i+3}T_{i+2}T_{i-1}^{-1}T_i(E_{i+1})\\
&=T_{i-3}T_{i-2}^{-1}T_{i-1}T_{i-2}T_{i+1}T_{i+3}T_{i+2}T_i(E_{i+1})\\
&=T_{i-3}T_{i-2}^{-1}T_{i-1}T_{i-2}T_{i+3}T_i^{-1}T_{i+1}(E_{i+2})\\
&=T_{i-3}T_{i-2}^{-1}T_{i-1}T_i^{-1}T_{i-2}T_{i+3}T_{i+1}(E_{i+2})\\
&=T_{i-3}T_{i-2}^{-1}T_{i-1}T_i^{-1}T_{i+1}T_{i+2}^{-1}(E_{i+3}).
\end{align*} holds which is the last equation to be checked.
\end{proof}

\begin{remark}
\label{RemGenerators}
Lemma \ref{GeneratorsAreX} shows all generators $u_i,v_i,w_i,x_i$ (for appropriate $i$) of $U_v^+(w)$ are of the form $X_{i',j'}$ (for appropriate $i',j'$). Hence, the formula of Lemma \ref{EulerShuffle} applies. In each case, $V_{[i',j']}$ is the associated $kQ$-module from Figure \ref{fig:QuiverA}. In other words, in each case ${\rm deg}(X_{i',j'})=\alpha_{i'}+\alpha_{i'+1}+\ldots+\alpha_{j'}$.
\end{remark}

\begin{remark} With the same argument we can conclude that for $Q'$ the equations 
\begin{align*}
&v'_{n-1}=T_{n-2}(E_{n-1})\\
&w'_{n-2}=T_{n-4}T_{n-3}^{-1}T_{n-2}(E_{n-1})\\
&x'_{n-1}=T_{n-4}(E_{n-3})\\
&x'_{n-3}=T_{n-6}T_{n-5}^{-1}T_{n-4}T_{n-3}^{-1}T_{n-2}(E_{n-1})
\end{align*}hold. Hence, Lemma \ref{GeneratorsAreX} and Remark \ref{RemGenerators} also hold true for the generators $u'_i,v'_i,w'_i,x'_i$ (for appropriate $i$) of $U_v^+(w')$.
\end{remark}

\subsection{The straightening relations for the generators of $U_v^+(w)$}
The following lemma expands every $E(\beta_j)E(\beta_i)$ with $1 \leq i < j \leq 2n$ in the Poincar\'{e}-Birkhoff-Witt basis $\mathcal{P}$. The relations of Lemma \ref{StraighteningRelations} are called {\it straightening relations}. Iterative use of the straightening relations from Lemma \ref{StraighteningRelations} allows us to write an arbitrarily ordered monomial in the generators $E(\beta_k)$ with $1 \leq k \leq 2n$ (and hence every element in $U_v^+(w)$) as a linear combination of Poincar\'{e}-Birkhoff-Witt basis elements $E[\textbf{a}]$ with $\textbf{a} \in \mathbb{N}^{2n}$.
\begin{lemma}
\label{StraighteningRelations}
The generators of $U_v^+(w)$ satisfy the following relations
\begin{align*}
&v_{i+1}u_i=vu_iv_{i+1}, && \for i=1,3,\cdots,n-2,\\ 
&v_{i-1}u_i=vu_iv_{i-1}, && \for i=3,5,\ldots,n,\\ \\
&w_{i+2}u_i=vu_iw_{i+2}, && \for i=1,3,\ldots,n-2,\\ 
&w_{i-2}u_i=vu_iw_{i-2}, && \for i=3,5,\ldots,n,\\ 
&w_1u_1=v^{-1}u_1w_1+v_2, &&\\ 
&w_iu_i=u_iw_i+(v-v^{-1})v_{i-1}v_{i+1}, && \for i=3,5,\cdots,n-2,\\ 
&w_nu_n=v^{-1}u_nw_n+v_{n-1}, && \\\\
&x_{i+3}u_i=vu_ix_{i+3}, && \for i=1,3,\ldots,n-4,\\
&x_{i-3}u_i=vu_ix_{i-3}, && \for i=5,7,\ldots,n,\\
&x_{i-1}u_i=u_ix_{i-1}+(v-v^{-1})v_{i+1}w_{i-2}, && \for i=3,5,\ldots,n-2,\\
&x_{n-1}u_n=v^{-1}u_nx_{n-1}+w_{n-2}, &&\\
&x_2u_1=v^{-1}u_1x_2+w_3, &&\\
&x_{i+1}u_i=u_ix_{i+1}+(v-v^{-1})v_{i-1}w_{i+2}, && \for i=3,5,\ldots,n-2,\\\\
&w_{i+1}v_i=vv_iw_{i+1}, && \for i=2,4,\ldots,n-1,\\ 
&w_{i-1}v_i=vv_iw_{i-1}, && \for i=2,4,\ldots,n-1,\\ \\
\end{align*}
\begin{align*}
&x_{i+2}v_i=vv_ix_{i+2}, && \for i=2,4,\ldots,n-3, \\
&x_{i-2}v_i=vv_ix_{i-2}, && \for i=4,6,\ldots,n-1, \\ 
&x_iv_i=v_ix_i+(v-v^{-1})w_{i-1}w_{i+1}, && \for i=2,4,\ldots,n-1,\\\\
&x_{i+1}w_i=vw_ix_{i+1}, && \for i=1,3,\ldots,n-2, \\ 
&x_{i-1}w_i=vw_ix_{i-1}, && \for i=3,5,\ldots,n.
\end{align*} 
For every $i,j$ with $1 \leq i < j \leq 2n$ such that $E(\beta_j)E(\beta_i)$ is not listed on the left-hand side above the commutativity relation $E(\beta_j)E(\beta_i)=E(\beta_i)E(\beta_j)$ holds.
\end{lemma}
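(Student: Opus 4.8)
The plan is to prove every relation as an identity inside Leclerc's quantum shuffle algebra $\mathcal{F}$, using the explicit descriptions of the generators supplied by Lemmas \ref{GeneratorsAreX} and \ref{EulerShuffle}. By Lemma \ref{GeneratorsAreX}, each of the generators $u_i,v_i,w_i,x_i$ occurring on either side of one of the displayed relations equals some $X_{i',j'}$, and Lemma \ref{EulerShuffle} writes $X_{i',j'}$ as $(1-v^{-2})^{j'-i'}$ times the sum of the monomials $w[\pi(i'),\pi(i'+1),\ldots,\pi(j')]$ over the alternating (``zigzag'') permutations $\pi$ of $\{i',i'+1,\ldots,j'\}$ described there. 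Substituting these expansions turns each relation into an identity between concrete elements of $\mathcal{F}$, and one verifies it by expanding the shuffle products on the two sides and comparing coefficients of the basis monomials $w[\cdots]$.

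The point that makes this a finite computation, uniform in $n$, is locality. Each generator is supported, as a graded element of $U_v(\mathfrak{n})\subset\mathcal{F}$, on a contiguous interval of vertices around its index: $\deg u_i=\alpha_i$, $\deg v_i=\alpha_{i-1}+\alpha_i+\alpha_{i+1}$, $\deg w_i=\alpha_{i-2}+\cdots+\alpha_{i+2}$, $\deg x_i=\alpha_{i-3}+\cdots+\alpha_{i+3}$, with the evident truncations for the boundary generators $w_1,w_n,x_2,x_{n-1}$. In every displayed relation all generators involved have supports lying in a common interval $[a,b]$ whose length is bounded independently of $n$; since the homogeneous piece $\mathcal{F}_\gamma$ with $\gamma$ supported on $[a,b]$ is spanned by the $w[i_1,\ldots,i_r]$ with all $i_k\in[a,b]$, and the shuffle product preserves this, the whole identity takes place in a bounded-rank truncation of $\mathcal{F}$ and depends only on the relative positions of the indices involved. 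Hence it suffices to check a finite list of identities, and the small ranks (where boundary generators may coincide with interior ones) are subsumed by the same check.

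For the $q$-commutation relations the exponent of $v$ is not in doubt: the convexity of the PBW ordering underlying $\mathcal{P}$ (the Levendorskii--Soibelman straightening formula, a consequence of Theorem \ref{ExistenceOfPBW}) forces $E(\beta_j)E(\beta_i)-v^{(\beta_i,\beta_j)}E(\beta_i)E(\beta_j)$ to be a $\mathbb{Q}(v)$-linear combination of PBW monomials whose support lies strictly between $\beta_i$ and $\beta_j$. So for the unlisted pairs one only needs $(\beta_i,\beta_j)=0$ (immediate from the interval description of the $\beta_k$) together with the vanishing of those intermediate terms, which the local shuffle computation delivers; when the two supports are even separated by a vertex the generators lie in subalgebras generated by $E_k$'s of pairwise distance $\geq 2$ and commute by (\ref{qcom1}). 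For the relations with a genuine correction term, such as $w_iu_i=u_iw_i+(v-v^{-1})v_{i-1}v_{i+1}$, $x_{i-1}u_i=u_ix_{i-1}+(v-v^{-1})v_{i+1}w_{i-2}$, $x_iv_i=v_ix_i+(v-v^{-1})w_{i-1}w_{i+1}$, or the boundary relations $w_1u_1=v^{-1}u_1w_1+v_2$ and $x_2u_1=v^{-1}u_1x_2+w_3$, the same straightening formula pins down the degree of the correction, and together with the explicit degrees above this leaves only the product actually displayed on the right-hand side; its coefficient is then read off by computing, via Lemma \ref{EulerShuffle}, the coefficient of one conveniently chosen monomial $w[\cdots]$ on both sides. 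The factor $v-v^{-1}$, which vanishes at $v=1$, is consistent with $U_v^+(w)$ degenerating to a commutative algebra in the classical limit.

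The one real obstacle is organisational rather than conceptual: there are many relations and boundary variants, and for the straightening relations one must simultaneously exclude spurious PBW monomials on the right and extract the exact scalar. Carrying out the locality reduction carefully, so that the generic relations and their boundary counterparts are genuinely reduced to the same short list of small-rank shuffle identities, is where attention is needed; each individual shuffle computation, once set up, is short, and the same reasoning applies verbatim to the primed generators of $U_v^+(w')$.
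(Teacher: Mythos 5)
Your route is genuinely different from the paper's. The paper never touches the shuffle algebra in this proof: it works inside $U_v(\mathfrak{g})$ itself, writing each generator as an image under a short composition of $T$-automorphisms of an element like $-F_iK_i$ or $T_{i\pm1}(E_{i\pm1\pm1})$, establishing the relation between those preimages by hand using the quantized Serre relations and the commutator $E_iF_j-F_jE_i=\delta_{ij}(K_i-K_i^{-1})/(v-v^{-1})$ (which is where the correction terms $v_2$, $(v-v^{-1})v_{i-1}v_{i+1}$, etc.\ are produced), and then applying the appropriate $T$-composition to both sides. Your plan instead pushes everything into $\mathcal{F}$ via Lemmas \ref{GeneratorsAreX} and \ref{EulerShuffle} and compares coefficients of shuffle monomials, using a locality/translation-invariance observation to make the check finite and uniform in $n$, and the Levendorski\u{\i}--Soibelman convexity property plus degree bookkeeping to predict the shape of each right-hand side. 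That strategy is viable: the embedding $U_v(\mathfrak{n})\hookrightarrow\mathcal{F}$ is an algebra map, the supports are bounded intervals, and in the cases I checked the degree argument does isolate a unique candidate correction monomial (e.g.\ only $v_{i-1}v_{i+1}$ can appear in $w_iu_i-v^{0}u_iw_i$). The paper's method has the advantage that a single identity, computed once in low rank, is transported to all the generic relations by one application of a $T$-automorphism; your method buys uniformity through locality in $\mathcal{F}$ instead, at the price of larger raw expansions (a product of two $w_i$-type generators already involves $5\cdot5$ zigzag words shuffled together).

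Two points need attention before this counts as a proof. First, and mainly: none of the coefficient computations is actually carried out. The content of the lemma is precisely the exact exponents of $v$ and the exact factors $(v-v^{-1})$, $1$, $v^{-1}$ in front of the correction terms, and these feed directly into Remark \ref{HighestExponent}, the dual straightening relations, and Theorem \ref{QuantRecursionForClusterVar}; asserting that "the coefficient is then read off" from one well-chosen monomial defers exactly the step that distinguishes, say, $(v-v^{-1})$ from $(v^{-1}-v)$ or $(1-v^{-2})$. At least one representative computation of each type (a $v$-commutation with overlapping supports, an interior relation with correction term, and a boundary relation such as $w_1u_1$) must be exhibited, together with the degree argument showing no other PBW monomial of the correct degree sits strictly between the two roots in the convex order. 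Second, the Levendorski\u{\i}--Soibelman straightening property is not "a consequence of Theorem \ref{ExistenceOfPBW}": that theorem only gives linear independence and well-definedness of the PBW basis, not the convexity statement that $E(\beta_j)E(\beta_i)-v^{(\beta_i,\beta_j)}E(\beta_i)E(\beta_j)$ is supported on intermediate roots. You should cite it as an external input (as the paper does, only in a remark after the lemma), or prove the relations outright by the full shuffle comparison without leaning on it.
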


\begin{proof}
Let $i$ be an integer with $1 \leq i \leq n-1$. We have 
\begin{align}T_iT_{i+2}(E_{i+1})&=T_i(E_{i+1}E_{i+2}-v^{-1}E_{i+2}E_{i+1})\nonumber\\
&=(E_{i+1}E_i-v^{-1}E_iE_{i+1})E_{i+2}-v^{-1}E_{i+2}(E_{i+1}E_i-v^{-1}E_iE_{i+1})\nonumber\\
&=E_{i+1}E_iE_{i+2}-v^{-1}E_iE_{i+1}E_{i+2}\nonumber\\&\quad-v^{-1}E_{i+2}E_{i+1}E_i+v^{-2}E_iE_{i+2}E_{i+1}.
\label{PfStraighten1}
\end{align}
Now let $i$ be an odd integer with $1 \leq i \leq n-2$. Then  $u_i=E_i$ and $v_{i+1}=T_iT_{i+2}(E_{i+1})$. By equation (\ref{PfStraighten1}) the following relations hold:
\begin{align*}
&v_{i+1}u_i=\left(E_{i+1}E_i^2-v^{-1}E_iE_{i+1}E_i\right)E_{i+2}+E_{i+2}\left(v^{-2}E_iE_{i+1}E_i-v^{-1}E_{i+1}E_i^2\right),\\
&u_iv_{i+1}=\left(E_iE_{i+1}E_i-v^{-1}E_i^2E_{i+1}\right)E_{i+2}+E_{i+2}\left(v^{-2}E_i^2E_{i+1}-v^{-1}E_iE_{i+1}E_i\right).
\end{align*}
By equation (\ref{qs1}) we get $v_{i+1}u_i-vu_iv_{i+1}=0$. The equation $v_{i-1}u_i=vu_iv_{i-1}$, for odd $i$ with $3 \leq i \leq n$, is proved analogously. Application of $T$ to the last two equations yields $x_{i+1}w_i=vw_ix_{i+1}$, for odd integers $i$ with $1 \leq i \leq n-2$, and $x_{i-1}w_i=vw_ix_{i-1}$, for odd integers $i$ with $3 \leq i \leq n$.

Now let $i$ be an even integer with $2 \leq i \leq n-1$. Then $v_i=T_{i-1}T_{i+1}(E_i)$ and $w_{i+1}=T_{i-3}T_{i-1}T_{i+1}T_{i+3}T_{i}T_{i+2}(E_{i+1})$. With he same argument as above we have $T_iT_{i+2}(E_{i+1})E_i=vE_iT_iT_{i+2}(E_{i+1})$. Apply the automorphism $T_{i-3}T_{i-1}T_{i+1}T_{i+3}$ to the last equation. Using the equation $T_{i-3}T_{i-1}T_{i+1}T_{i+3}(E_i)=T_{i-1}T_{i+1}(E_i)=v_i$ we get $w_{i+1}v_i=vv_iw_{i+1}$. Similarly, the equation $w_{i-1}v_i=vv_iw_{i-1}$ holds. 

Let $i$ be an integer with $1 \leq i \leq n-1$. We have 
\begin{align*}
-T_{i+1}(E_{i+2})F_iK_i&=-v^{-1}(E_{i+2}E_{i+1}-v^{-1}E_{i+1}E_{i+2})F_iK_i\\
&=-F_iK_i(E_{i+2}E_{i+1}-v^{-1}E_{i+1}E_{i+2})=-F_iK_iT_{i+1}(E_{i+2}). 
\end{align*}
Application of the composition of automorphisms $T_iT_{i+2}T_{i+4}T_{i+3}$ yields 
\begin{align}
T_iT_{i+2}T_{i+4}T_{i+1}T_{i+3}(E_{i+2})E_i=vE_iT_iT_{i+2}T_{i+4}T_{i+1}T_{i+3}(E_{i+2}).
\label{PfStraighten2}
\end{align}
Now let $i$ be more specifically an odd integer with $1 \leq i \leq n-2$. The previous equation (\ref{PfStraighten2}) asserts that $w_{i+2}u_i=vu_iw_{i+2}$. The equation $w_{i-2}u_i=vu_iw_{i-2}$, for odd $i$ with $3 \leq i \leq n$, is proved analogously. Now let $i$ be an even integer with $2 \leq i \leq n-3$. From (\ref{PfStraighten2}) we see after application of $T_{i-1}T_{i+1}T_{i+3}T_{i+5}$ that $x_{i+2}v_i=vv_ix_{i+2}$. The equation $x_{i-2}v_i=vv_ix_{i-2}$, for even $i$ with $4 \leq i \leq n-1$, is proved analogously.

Furthermore, there holds:
\begin{align*}
&-F_1K_1(E_1E_2-v^{-1}E_2E_1)\\
&\quad =-vF_1E_1E_2K_1+F_1E_2E_1K_1\\
&\quad =v\left(\frac{K_1-K_1^{-1}}{v-v^{-1}}-E_1F_1\right)E_2K_1+E_2\left(E_1F_1-\frac{K_1-K_1^{-1}}{v-v^{-1}}\right)K_1\\
&\quad =-v(E_1E_2-v^{-1}E_2E_1)F_1K_1+\frac{1}{v-v^{-1}}(E_2K_1^2-v^2E_2-E_2K_1^2+E_2)\\
&\quad =-v(E_1E_2-v^{-1}E_2E_1)F_1K_1-vE_2.
\end{align*}
Application of the map $T_1T_3$ yields to $u_1w_1=vw_1u_1-vv_2$ which is equivalent to $w_1u_1=v^{-1}u_1w_1+v_2$. The next equation $w_nu_n=v^{-1}u_nw_n+v_{n-1}$ is proved analogously.

Let $i$ be an integer with $2 \leq i \leq n-1$. Introduce the abbreviation 
\begin{align*}
S&=T_{i-1}T_{i+1}(E_i)\\&=E_iE_{i-1}E_{i+1}-v^{-1}E_{i-1}E_iE_{i+1}-v^{-1}E_{i+1}E_iE_{i-1}+v^{-2}E_{i-1}E_{i+1}E_i. 
\end{align*}
We have $K_iS=SK_i$, so $-F_iK_iS$ is equal to
\begin{align}
&(-F_iE_iE_{i-1}E_{i+1}+v^{-1}E_{i-1}F_iE_iE_{i+1}\nonumber\\
&\hspace{2cm}+v^{-1}E_{i+1}F_iE_iE_{i-1}-v^{-2}E_{i-1}E_{i+1}F_iE_i)K_i\nonumber\\
&=-SF_iK_i+\frac{1}{v-v^{-1}}\Big[(K_i-K_i^{-1})E_{i-1}E_{i+1}-v^{-1}E_{i-1}(K_i-K_i^{-1})E_{i+1}\nonumber\\
&\hspace{2cm}-v^{-1}E_{i+1}(K_i-K_i^{-1})E_{i-1}+v^{-2}E_{i-1}E_{i+1}(K_i-K_i^{-1})\Big]\nonumber\\
&=-SF_iK_i+\frac{1}{v-v^{-1}}\Big[(v^{-2}-v^{-2}-v^{-2}+v^{-2})E_{i-1}E_{i+1}K_i^2\nonumber\\
&\hspace{6cm}+(-v^2+2-v^{-2})E_{i-1}E_{i+1}\Big]\nonumber\\
&=-SF_iK_i+(v^{-1}-v)E_{i+1}E_{i-1}.
\label{PfStraighten3}
\end{align}
Now let $i$ be more specifically an odd integer with $3 \leq i \leq n-2$. After application of $T_{i-2}T_iT_{i+2}$ the previous equation (\ref{PfStraighten3}) asserts that $u_iw_i=w_iu_i+(v^{-1}-v)v_{i+1}v_{i-1}$. If $i$ is an even integer with $2 \leq i \leq n-1$, then application of the composition $T_{i-3}T_{i-1}T_{i+1}T_{i+3}T_{i-2}T_iT_{i+2}$ to equation (\ref{PfStraighten3}) yields $v_ix_i=x_iv_i+(v^{-1}-v)w_{i+1}w_{i-1}$.

Let $i$ be an odd integer with $1 \leq i \leq n-4$. Since $T_{i+1}T_{i+2}T_{i+4}(E_{i+3})$ is a linear combination of monomials in $E_{i+1}$, $E_{i+2}$, $E_{i+3}$, and $E_{i+4}$ with each factor appearing once, we see that 
\begin{align}
-T_{i+1}T_{i+2}T_{i+4}(E_{i+3})F_iK_i=-vF_iK_iT_{i+1}T_{i+2}T_{i+4}(E_{i+3}).
\label{PfStraighten4}
\end{align}
Applying $T_iT_{i+2}T_{i+4}T_{i+6}T_{i+3}T_{i+5}$ to (\ref{PfStraighten4}) yields $x_{i+3}u_i=vu_ix_{i+3}$. The equation $x_{i-3}u_i=vu_ix_{i-3}$, for odd $i$ with $5 \leq i \leq n$, is proved analogously.

Consider the three elements $T_2^{-1}(-F_1K_1)$, $T_1T_3(E_2)$, and $E_3$. We introduce the abbreviation $X=T_2^{-1}(-F_1K_1)=(vF_2F_1-F_1F_2)K_1K_2$. We have
\begin{align*}
(vF_2F_1-F_1F_2)E_1&=vF_2\left(E_1F_1-\frac{K_1-K_1^{-1}}{v-v^{-1}}\right)-\left( E_1F_1-\frac{K_1-K_1^{-1}}{v-v^{-1}}\right)F_2\\
&=E_1(vF_2F_1-F_1F_2)\\&\quad+\frac{1}{v-v^{-1}}\left[-vF_2(K_1-K_1^{-1})+(K_1-K_1^{-1})F_2\right]\\
&=E_1(vF_2F_1-F_1F_2)+F_2K_1^{-1}.
\end{align*}
Therefore, $XE_1=vE_1X+vF_2K_2$. Similarly, $XE_2=vE_2X+vF_1K_1K_2^2$. Furthermore, $XE_3=v^{-1}E_3X$. Hence
\begin{align*}
&X(E_2E_1-v^{-1}E_1E_2)\\
&\quad=(vE_2X+vF_1K_1K_2^2)E_1-(E_1X+F_2K_2)E_2\\
&\quad=v(vE_1X+vF_2K_2)+v^2F_1K_1E_1-E_1(vE_2X+vF_1K_1K_2^2)-F_2K_2E_2\\
&\quad=v^2(E_2E_1-v^{-1}E_1E_2)X+v^2(E_2F_2-F_2E_2)K_2+v(F_1E_1-E_1F_1)K_1K_2^2.
\end{align*}
Introduce the abbreviations $Y=E_2E_1-v^{-1}E_1E_2$ and $R=v^2(E_2F_2-F_2E_2)K_2+v(F_1E_1-E_1F_1)K_1K_2^2=\frac{1}{v-v^{-1}}\Big[v^2K_2^2-v^2-vK_1^2K_2^2+vK_2^2\Big]$.
It follows that $RE_3-v^{-2}E_3R=-vE_3$. Note that $T_1T_3(E_2)=YE_3-v^{-1}E_3Y$ is equal to a $v$-commutator. Hence
\begin{align}
XT_1T_3(E_2)&=X(YE_3-v^{-1}E_3Y)\nonumber\\
&=(v^2YX+R)E_3-v^{-2}E_3(v^2Yx+R)\nonumber\\
&=v(YE_3-v^{-1}E_3Y)-RE_3-v^{-2}E_3R\nonumber\\
&=vT_1T_3(E_2)-vE_3.
\label{PfStraighten5}
\end{align}
Application of $T_1T_3T_5T_2T_4$ to equation (\ref{PfStraighten5}) yields to $u_1x_2=vx_2u_1-vw_3$ which is equivalent to $x_2u_1=v^{-1}u_1x_2+w_3$. The equation $x_{n-1}u_n=v^{-1}u_nx_{n-1}+w_{n-2}$ is proved analogously.

Now let $i$ be an odd integer such that $3 \leq i \leq n-2$. Let us consider the four elements $T_{i+1}^{-1}(-F_iK_i)$, $T_{i-1}T_iT_{i+2}(E_{i+1})$, $E_{i+2}$, and $E_{i-1}$. Now we denote by $X$ the element $T_{i+1}^{-1}(-F_iK_i)$. Similarly as above, the equations $XE_i=vE_iX+vF_{i+1}K_{i+1}$, $XE_{i+1}=vE_{i+1}X+vF_iK_iK_{i+1}^2$, and $XE_{i+2}=v^{-1}E_{i+2}X$ hold. Furthermore, we see that $XE_{i-1}=(vF_{i+1}F_i-F_iF_{i+1})K_iK_{i+1}E_{i-1}=v^{-1}E_{i-1}X$. Note that 
\begin{align*}
&T_{i-1}T_iT_{i+2}(E_{i+1})\\
&\quad =T_{i-1}(E_{i+1}E_iE_{i+2}-v^{-1}E_iE_{i+1}E_{i+2}-v^{-1}E_{i+2}E_{i+1}E_i+v^{-2}E_iE_{i+2}E_{i+1})\\
&\quad=T_iT_{i+2}(E_{i+1})E_{i-1}-v^{-1}E_{i-1}T_iT_{i+2}(E_{i+1}).
\end{align*}
With the same argument as above one can prove that $$XT_iT_{i+2}(E_{i+1})=vT_iT_{i+2}(E_{i+1})X-vE_{i+2}.$$ From this equation it follows that
\begin{align}
&XT_{i-1}T_iT_{i+2}(E_{i+1})\nonumber\\
&=(vT_iT_{i+2}(E_{i+1})X-vE_{i+2})E_{i-1}-v^{-2}E_{i-1}(vT_iT_{i+2}(E_{i+1})X-vE_{i+2})\nonumber\\
&=(T_iT_{i+2}(E_{i+1})E_{i-1}-v^{-1}E_{i-1}T_iT_{i+2}(E_{i+1}))X+(v^{-1}-v)E_{i+2}E_{i-1}.
\label{PfStraighten6}
\end{align}
Application of the automorphism $T_{i-2}T_iT_{i+2}T_{i+4}T_{i+1}T_{i+3}$ to equation (\ref{PfStraighten6}) gives $u_ix_{i+1}=x_{i+1}u_i+(v^{-1}-v)w_{i+2}v_{i-1}$. The equation $u_ix_{i-1}=x_{i-1}u_i+(v^{-1}-v)w_{i-2}v_{i+1}$ is proved analogously.

After multiplying with appropriate $T$-automorphisms, all others pairs $E(\beta_i)$ and $E(\beta_j)$ of generators become $\mathbb{Q}(v)$-linear combinations of momomials $E_{i_1}E_{i_2}\cdots E_{i_k}$ and $E_{i'_1}E_{i'_2}\cdots E_{i'_{k'}}$, respectively, where the two occuring sequences $(i_1,i_2,\ldots,i_k)$ and $(i'_1,i'_2,\ldots,i'_{k'})$ of indices come from two intervals of distance at least two. Hence they commute.
\end{proof}

\begin{remark}
\label{HighestExponent}
In the straightening relations of Lemma \ref{StraighteningRelations}, for all $i,j$ with $1 \leq i < j \leq 2n$, the coefficient in front of $E(\beta_i)E(\beta_j)$ in the expansion of $E(\beta_j)E(\beta_i)$ in the Poincar\'{e}-Birkhoff-Witt basis is $v^{(\beta_i,\beta_j)}$. 
\end{remark}

\begin{remark}
Similarly, there are straightening relations for the generators $u'_i$, $v'_i$, $w'_i$, $x'_i$ (for appropriate $i$) of $U_v^+(w')$ that enable us to expand every element of $U_v^+(w')$ in the Poincar\'{e}-Birkhoff-Witt basis. Let us describe them. First of all, note that $v'_{n-1}=T_n^{-1}(v_{n-1})$, $w'_{n-1}=T_n^{-1}w_{n-1}$, $x'_{n-3}=T_n^{-1}x_{n-3}$, and that $T_n^{-1}$ leaves all generators invariant except for $v'_{n-1}$,  $w'_{n-1}$, $x'_{n-3}$, and $x'_{n-1}$. Therefore, the straightening relations of $U_v^+(w')$ are the same as the ones for $U_v^+(w)$ except for the straightening relations involving $x'_{n-1}$. 

Calculations similar to those in Lemma \ref{StraighteningRelations} show that the straightening relations involving $x'_{n-1}$ are commutativity relations except for:
\begin{align*}
&x'_{n-1}w'_{n-2}=vw'_{n-2}x'_{n-1},\\
&x'_{n-1}v'_{n-3}=vv'_{n-3}x'_{n-1},\\
&x'_{n-1}v'_{n-1}=v^{-1}v'_{n-1}x'_{n-1}+w'_{n-2},\\
&x'_{n-1}u'_{n-2}=v^{-1}u'_{n-2}x'_{n-1}+v'_{n-3},\\
&x'_{n-1}u'_{n-4}=vu'_{n-4}x'_{n-1}.
\end{align*}Note that Remark \ref{HighestExponent} is also true in this case.
\end{remark}

\begin{remark}
The commutation exponent of Remark \ref{HighestExponent} and a weaker (non-explicit) form the straightening relations of Lemma \ref{StraighteningRelations} is given by the Lemma of Levendorki\u{\i}-Soibelman \cite[Proposition 5.5.2]{LevendorskiiSoibelman}. See also Kimura \cite[Theorem 4.24]{Kimura}.
\end{remark}

\subsection{The dual Poincar\'{e}-Birkhoff-Witt basis}

Kashiwara \cite{Kashiwara} introduced operators $E_i'\in {\rm End}(U_v(\mathfrak{n}))$ for $1 \leq i \leq n$ such that the following two properties hold: First of all, $E_i'(E_j)=\delta_{i,j}$ for all $i,j$. Secondly, the Leibniz rule $E_i'(xy)=E_i'(x)y+v^{(\alpha_i,\vert x\vert)}xE'_i(y)$ holds for all $i$ and all homogeneous elements $x,y \in U_v(\mathfrak{n})$. Furthermore, Kashiwara \cite{Kashiwara} introduced a non-degenerate symmetric bilinear form $$\left(\cdot,\cdot \right) \colon U_v(\mathfrak{n}) \times U_v(\mathfrak{n}) \to \mathbb{Q}(v).$$ It is characterized by the assumption that the endomorphism $E_i'$ of $U_v(\mathfrak{n})$ is adjoint to the left multiplication with $E_i$, i.e., $(E'_i(x),y)=(x,E_iy)$ for all $x,y \in U_v(\mathfrak{n})$ and $i \in \left\lbrace 1,2,\ldots,n\right\rbrace$. 

The algebra $U_v(\mathfrak{n})$ is a {\it Hopf algebra}. The $E_i'$ may be viewed as elements in the graded dual Hopf algbera $U_v(\mathfrak{n})_{gr}^{\ast}$. We refer to Berenstein-Zelevinsky \cite[Appendix]{BZ:StringBases} for details.

Lusztig \cite[Section 1.2]{Lusztig:QuantumGroups} defined a different non-degenerate symmetric bilinear form $\left(\cdot,\cdot \right)_{L} \colon U_v(\mathfrak{n}) \times U_v(\mathfrak{n}) \to \mathbb{Q}(v)$. In this paper we use Kashiwara's form. Both forms can be compared, see Leclerc \cite[Section 2.2]{Leclerc:Shuffles}.

According to Lusztig \cite[Proposition 38.2.3]{Lusztig:QuantumGroups} the Poincar\'{e}-Birkhoff-Witt basis is orthogonal with respect to Lusztig's bilinear form. The comparison between both forms shows that it is also orthogonal with respect to Kashiwara's bilinear form. More precisely, for all $\beta, \gamma \in \Delta^+$, we have (see Leclerc \cite[Equation 21]{Leclerc:Shuffles} where the author uses the variable $q^{-1}$ intead of $v$) $\left(E(\beta),E(\gamma)\right)=0$ for $\beta \neq \gamma$, and $$\left(E(\beta),E(\beta)\right)=\frac{\prod_{i=1}^n(1-v^{-(\alpha_i,\alpha_i)})^{d_i}}{1-v^{-(\beta,\beta)}}$$ for $\beta=\sum_{i=1}^n d_i\alpha_i \ {\rm with \ } d_i \in \mathbb{N}$. Compare also with Kimura \cite[Proposition 4.18]{Kimura}.

\begin{remark}
Every $\beta \in \Delta^+$ fulfills $(\beta,\beta)=2$; every $\alpha_i$ with $i \in Q_0$ fulfills $(\alpha_i,\alpha_i)=2$. Hence, if $\beta=\alpha_i+\alpha_{i+1}+\cdots+\alpha_j \in \Delta^+$, then $(E(\beta),E(\beta))=(1-v^{-2})^{j-i}$.
\end{remark}

\begin{definition}
The {\it dual Poincar\'{e}-Birkhoff-Witt basis} $\mathcal{P}^*$ of $U_v^+(w)$ is defined to be the basis adjoint to the Poincar\'{e}-Birkhoff-Witt basis with respect to Kashiwara's form. For every natural number $k$ with $1 \leq k \leq 2n$ we denote by $E^*(\beta_k) \in \mathcal{P}^*$ the dual of $E(\beta_k) \in \mathcal{P}$, i.e., the unique scalar multiple of $E(\beta_k)$ such that $(E(\beta_k),E^*(\beta_k))=1$.
\end{definition}

\begin{remark}
\label{ShuffleExpForDuals}
Assume that $k$ is an integer with $1 \leq k \leq 2n$. Let $i,j$ be the integers with $1 \leq i \leq j \leq n$ such that we can write $\beta_k\in\Delta^+$ as $\beta_k=\alpha_i+\alpha_{i+1}+\ldots+\alpha_j$. By Lemma \ref{EulerShuffle} we have 
\begin{align*}
E^*(\beta_k)=(1-v^{-2})^{i-j}X_{i,j}=\sum_{\pi}w[\pi(i),\pi(i+1),\ldots,\pi(j)]
\end{align*}
where the sum runs over all permutations $\pi$ of $\left\lbrace i,i+1,\ldots,j\right\rbrace$ such that for every even number $k$ with $i \leq k \leq j-1$ we have $\pi^{-1}(k)>\pi^{-1}(k+1)$ and for every even number $k$ with $i+1 \leq k \leq j$ we have $\pi^{-1}(k)>\pi^{-1}(k-1)$. 
\end{remark}

\begin{definition}
We also introduce a shorthand notation for $E^*(\beta_k)$ with $1 \leq k \leq 2n$. To reflect similarities with the cluster algebra from Section \ref{SubSection:ClusterAlgebra} we put
\begin{align*}
&y_i=u_i^*, &&{\rm for} \ {\rm odd} \ i \ {\rm with} \ 1 \leq i \leq n,\\ 
&z_i=v_i^*, &&{\rm for} \ {\rm even} \ i \ {\rm with} \ 2 \leq i \leq n-1,\\ 
&z_i=w_i^*, &&{\rm for} \ {\rm odd} \ i \ {\rm with} \ 1 \leq i \leq n,\\ 
&y_i=x_i^*, &&{\rm for} \ {\rm even} \ i \ {\rm with} \ 2 \leq i \leq n-1.
\end{align*}
\end{definition}

\begin{remark}
Let $1\leq i\leq n$. If $i$ is odd, then we have $E^*(\beta_i)=y_i$ and $E^*(\beta_{n+i})=z_i$; if $i$ is even, then we have $E^*(\beta_i)=z_i$ and $E^*(\beta_{n+i})=y_i$.
\end{remark}

\begin{remark}
The straightening relations of Lemma \ref{StraighteningRelations} now become
\begin{align*}
&z_{i+1}y_i=vy_iz_{i+1}, && {\rm for} \ i \ {\rm odd \ with} \ 1 \leq i \leq n-2,\\ 
&z_{i-1}y_i=vy_iz_{i-1}, && {\rm for} \ i \ {\rm odd \ with} \ 3 \leq i \leq n,\\ \\
&z_{i+2}y_i=vy_iz_{i+2}, && {\rm for} \ i \ {\rm odd \ with} \ 1 \leq i \leq n-2,\\  
&z_{i-2}y_i=vy_iz_{i-2}, && {\rm for} \ i \ {\rm odd \ with} \ 3 \leq i \leq n,\\   
&z_1y_1=v^{-1}y_1z_1+(1-v^{-2})z_2, &&\\ 
\end{align*}
\begin{align*}
&z_iy_i=y_iz_i+(v-v^{-1})z_{i-1}z_{i+1}, && {\rm for} \ i \ {\rm odd \ with} \ 3 \leq i \leq n-2,\\ 
&z_ny_n=v^{-1}y_nz_n+(1-v^{-2})z_{n-1}, && \\\\
&y_{i+3}y_i=vy_iy_{i+3}, && {\rm for} \ i \ {\rm odd \ with} \ 1 \leq i \leq n-4,\\ 
&y_{i-3}y_i=vy_iy_{i-3}, && {\rm for} \ i \ {\rm odd \ with} \ 5 \leq i \leq n,\\ 
&y_{i-1}y_i=y_iy_{i-1}+(v-v^{-1})z_{i+1}z_{i-2}, && {\rm for} \ i \ {\rm odd \ with} \ 3 \leq i \leq n-2,\\ 
&y_{n-1}y_n=v^{-1}y_ny_{n-1}+(1-v^{-2})z_{n-2}, &&\\
&y_2y_1=v^{-1}y_1y_2+(1-v^{-2})z_3, &&\\ 
&y_{i+1}y_i=y_iy_{i+1}+(v-v^{-1})z_{i-1}z_{i+2}, && {\rm for} \ i \ {\rm odd \ with} \ 3 \leq i \leq n-2,\\\\
&z_{i+1}z_i=vz_iz_{i+1}, && {\rm for} \ i \ {\rm even \ with} \ 2 \leq i \leq n-1,\\
&z_{i-1}z_i=vz_iz_{i-1}, && {\rm for} \ i \ {\rm even \ with} \ 2 \leq i \leq n-1,\\\\
&y_{i+2}z_i=vz_iy_{i+2}, && {\rm for} \ i \ {\rm even \ with} \ 2 \leq i \leq n-3,\\
&y_{i-2}z_i=vz_iy_{i-2}, && {\rm for} \ i \ {\rm even \ with} \ 4 \leq i \leq n-1,\\ 
&y_iz_i=z_iy_i+(v-v^{-1})z_{i-1}z_{i+1}, && {\rm for} \ i \ {\rm even \ with} \ 2 \leq i \leq n-1,\\\\
&y_{i+1}z_i=vz_iy_{i+1}, && {\rm for} \ i \ {\rm odd \ with} \ 1 \leq i \leq n-2,\\ 
&y_{i-1}z_i=vz_iy_{i-1}, && {\rm for} \ i \ {\rm odd \ with} \ 3 \leq i \leq n. 
\end{align*} 
\end{remark}

\begin{definition}
For every element ${\textbf a} \in \mathbb{N}^{2n}$ denote by $E[{\textbf a}]^*\in U_v^+(w)$ the dual of $E[{\textbf a}] \in \mathcal{P}$, i.e., the unique scalar multiple of $E[{\textbf a}]$ such that $(E[{\textbf a}],E[{\textbf a}]^*)=1$. 

Consider $U_v^{+}(w)_{\mathbb{Z}}=\bigoplus_{{\textbf a} \in \mathbb{N}^{2n}}\mathbb{Z}[v,v^{-1}]E[{\textbf a}]^*$, the integral form of $U_v^{+}(w)$. Furthermore, put $\mathcal{A}(w)_1=\mathbb{Q} \otimes_{\mathbb{Z}[v,v^{-1}]} U_v^{+}(w)_{\mathbb{Z}}$. Here, the action of $v$ in the tensor product is given by $1$. We call the algebra $\mathcal{A}(w)_1$ the classical limit of $U_v^{+}(w)$ or the specialization of $U_v^{+}(w)$ at $v=1$. Furthermore, the $\mathbb{Z}[v^{\pm \frac12}]$-algebra $\mathcal{A}_v(w)=\bigoplus_{{\textbf a} \in \mathbb{N}^{2n}}\mathbb{Z}[v^{\pm \frac12 }]E[{\textbf a}]^*$ is the integral form of the algebra $\mathbb{Q}[v^{\pm \frac12}] \otimes_{\mathbb{Z}[v,v^{-1}]} \mathcal{A}(w)$ and will be useful in further considerations.
\end{definition}

\begin{remark}
Note that, by the form of the straightening relations for the dual variables from above, $\mathcal{A}(w)_1$ is a commutative algebra.
\end{remark}

\begin{definition}
Define a function $b \colon \mathbb{N}^{2n} \to \mathbb{Z}$ by $b(\textbf{a})=\sum_{k=1}^{2n}\genfrac(){0cm}{1}{a_k}{2}$ for a sequence $\textbf{a}=(a_1,a_2,\ldots,a_{2n}) \in \mathbb{N}^{2n}$.
\end{definition}

\begin{proposition}
\label{DualPBWFormula}
For every sequence $\textbf{a}=(a_1,a_2,\ldots,a_{2n}) \in \mathbb{N}^{2n}$ the equation $E[\textbf{a}]^*=v^{-b(\textbf{a})}E^*(\beta_1)^{a_1}E^*(\beta_2)^{a_2}\cdots E^*(\beta_{2n})^{a_{2n}}$ is true.
\end{proposition}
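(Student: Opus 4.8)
The plan is to convert the statement into an identity between the Gram norms of Poincar\'e--Birkhoff--Witt monomials with respect to Kashiwara's form, and then to evaluate these norms.

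First I would unwind all the scalars. Since $E^{(a)}(\beta_k)=\tfrac{1}{[a]!}E(\beta_k)^{a}$, since $E^{*}(\beta_k)=(E(\beta_k),E(\beta_k))^{-1}E(\beta_k)$ by definition of the dual root vector, and since the Poincar\'e--Birkhoff--Witt basis $\mathcal{P}$ is orthogonal (by Lusztig \cite[Prop.~38.2.3]{Lusztig:QuantumGroups} together with the comparison of forms recalled above), so that $E[\textbf{a}]^{*}=(E[\textbf{a}],E[\textbf{a}])^{-1}E[\textbf{a}]$, all three of $E[\textbf{a}]$, $E[\textbf{a}]^{*}$ and $E^{*}(\beta_1)^{a_1}\cdots E^{*}(\beta_{2n})^{a_{2n}}$ are explicit scalar multiples of $E(\beta_1)^{a_1}\cdots E(\beta_{2n})^{a_{2n}}$. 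Comparing the scalars and using $b(\textbf{a})=\sum_{k}\binom{a_k}{2}$, the proposition becomes equivalent to the two identities
\begin{align*}
(E[\textbf{a}],E[\textbf{a}])=\prod_{k=1}^{2n}(E^{(a_k)}(\beta_k),E^{(a_k)}(\beta_k))
\qquad\text{and}\qquad
(E^{(a)}(\beta_k),E^{(a)}(\beta_k))=\frac{v^{\binom{a}{2}}}{[a]!}\,(E(\beta_k),E(\beta_k))^{a}.
\end{align*}

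Second, I would prove the single-root norm $(E(\beta_k)^{a},E(\beta_k)^{a})=v^{\binom{a}{2}}[a]!\,(E(\beta_k),E(\beta_k))^{a}$ (which yields the right-hand factor of the second identity) by induction on $a$, the inductive step being $(E(\beta_k)^{a},E(\beta_k)^{a})=v^{a-1}[a]\,(E(\beta_k),E(\beta_k))\,(E(\beta_k)^{a-1},E(\beta_k)^{a-1})$. For $\beta_k=\alpha_i$ this is immediate from Kashiwara's adjunction $(E_i'(x),y)=(x,E_iy)$ and the Leibniz rule, which give $E_i'(E_i^{a})=v^{a-1}[a]E_i^{a-1}$; for a general $\beta_k=\alpha_i+\cdots+\alpha_j$ one reduces to this case through Lusztig's braid symmetry, using that the $T_i$ are defined on all of $U_v(\mathfrak{g})$ and relate matching weight spaces. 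The first identity of the display is the multiplicativity of the form along the Poincar\'e--Birkhoff--Witt factorization: it follows from orthogonality of $\mathcal{P}$ together with the Hopf-pairing identity $(xy,z)=\sum_{(z)}(x,z_{(1)})(y,z_{(2)})$ and the triangularity of the twisted comultiplication with respect to the convex order on $\Delta_w^{+}$, all due to Lusztig; equivalently, the diagonal Gram values are already contained in Lusztig's orthogonality statement once it is transported to Kashiwara's form.

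The routine part is the scalar bookkeeping of the first step. The genuine point, and what I expect to be the main obstacle, is the passage from simple roots to the roots $\beta_k$ — equivalently, the compatibility of Kashiwara's form with the Poincar\'e--Birkhoff--Witt factorization and with divided powers of root vectors — since the form is not literally invariant under the $T_i$. I expect the cleanest presentation to invoke Lusztig's explicit orthonormality-type formula for the divided-power Poincar\'e--Birkhoff--Witt basis directly (after the comparison of forms), so that the entire argument reduces to checking that the resulting power of $v$ is exactly $\sum_{k}\binom{a_k}{2}=b(\textbf{a})$.
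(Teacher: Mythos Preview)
Your proposal is correct and arrives at exactly the paper's approach: the paper's proof consists solely of citing Lusztig's explicit evaluation of the form on PBW elements \cite[Proposition 38.2.3]{Lusztig:QuantumGroups} together with Leclerc's conversion formula between Lusztig's and Kashiwara's forms, and refers to Leclerc \cite[Section 5.5.3]{Leclerc:Shuffles} for the computation that the resulting scalar is $v^{-b(\textbf{a})}$. Your final paragraph identifies this as the cleanest route, and that is indeed all the paper does; the intermediate alternatives you sketch (the induction on $a$ via Kashiwara's adjunction, the Hopf-pairing multiplicativity argument) are unnecessary once you invoke Lusztig's formula directly.
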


\begin{proof}
The proposition follows from Lusztig's evaluation \cite[Proposition 38.2.3]{Lusztig:QuantumGroups} for of the bilinear form at Poincar\'{e}-Birkhoff-Witt basis elements together with Leclerc's conversion formula \cite[Section 2.2]{Leclerc:Shuffles}, see Leclerc \cite[Section 5.5.3]{Leclerc:Shuffles}.
\end{proof}

\subsection{The dual canonical basis}
In this section we present the {\rm dual canonical basis} of $U_v^+(w)$. It is the dual of Lusztig's canonical basis from Lusztig \cite[Theorem 14.2.3]{Lusztig:QuantumGroups}. We need some auxiliary notations. The following definitions are special cases of Leclerc's definitions \cite[Section 2.7]{Leclerc:Shuffles} where $\mathfrak{g}$ is a general complex semisimple Lie algebra.

\begin{definition}
\label{Norm} For $\textbf{a}=(a_1,a_2,\ldots,a_{2n}) \in \mathbb{N}^{2n}$ write ${\rm deg}(E[\textbf{a}]^*)=\sum_{k=1}^{2n}a_k\beta_k \in Q^+$ as a $\mathbb{N}$-linear combination in the simple roots, i.e.,  ${\rm deg}(E[\textbf{a}]^*)=\sum_{k=1}^{2n}a_k\beta_k=\sum_{i=1}^nd_i\alpha_i$ with $d_i \in \mathbb{Z}$. Put $$N(\textbf{a})=\frac{1}{2}\Big( {\rm deg}(E[\textbf{a}]^*),{\rm deg}(E[\textbf{a}]^*)\Big) -\sum_{i=1}^nd_i.$$ We call $N$ the {\it norm} of the sequence $\textbf{a} \in \mathbb{N}^{2n}$. We also use the convention $$N(\sum_{i=1}^nd_i\alpha_i)=\frac{1}{2}\Big(\sum_{i=1}^nd_i\alpha_i,\sum_{i=1}^nd_i\alpha_i\Big)-\sum_{i=1}^nd_i$$ for elements in the root lattice.
\end{definition}

\begin{proposition}
For every natural number $k$ with $1 \leq k \leq 2n$ we have $\sigma(E^*(\beta_k))=v^{N(\beta_k)}E^*(\beta_k)$. 
\end{proposition}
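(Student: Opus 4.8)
The plan is to reduce the statement to the shuffle-algebra description of $E^*(\beta_k)$ from Remark \ref{ShuffleExpForDuals} and to compute the action of $\sigma$ directly on that expansion. Recall that $E^*(\beta_k) = \sum_\pi w[\pi(i),\pi(i+1),\ldots,\pi(j)]$, a sum of shuffle monomials each of which is a permutation of the \emph{fixed} sequence $(i,i+1,\ldots,j)$, where $\beta_k=\alpha_i+\cdots+\alpha_j$. First I would record how $\sigma$ acts in the shuffle model: since $\sigma$ is the antiautomorphism with $\sigma(v)=v^{-1}$ and $\sigma(E_l)=E_l$, and since $w[l_1,\ldots,l_r]$ is (up to a scalar) the image of $E_{l_1}E_{l_2}\cdots E_{l_r}$ under the embedding $U_v(\mathfrak{n})\hookrightarrow\mathcal{F}$, one checks that $\sigma$ reverses each word, i.e., on the degree-$\beta_k$ component $\sigma$ sends $w[l_1,l_2,\ldots,l_r]$ to $v^{c}\,w[l_r,\ldots,l_2,l_1]$ for an explicit exponent $c$ depending only on the multiset of letters (hence only on $\beta_k$), because transposing adjacent letters in the shuffle product introduces a power of $v$ governed by $(\alpha_\bullet,\alpha_\bullet)$. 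Concretely, reversing a word of content $\beta_k=\sum d_i\alpha_i$ costs $v$ to the power $-\tfrac12\big[(\beta_k,\beta_k)-\sum_i d_i(\alpha_i,\alpha_i)\big] = -\big[\tfrac12(\beta_k,\beta_k)-\sum_i d_i\big] = -N(\beta_k)$ when one keeps careful track of the sign conventions; so the first task is to pin that exponent down correctly.

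The second, and slightly more delicate, point is that $\sigma$ must preserve the \emph{set} of admissible permutations appearing in the sum for $E^*(\beta_k)$. The admissibility conditions in Remark \ref{ShuffleExpForDuals} read: for every even $k$ with $i\le k\le j-1$ one has $\pi^{-1}(k)>\pi^{-1}(k+1)$, and for every even $k$ with $i+1\le k\le j$ one has $\pi^{-1}(k)>\pi^{-1}(k-1)$. Word reversal $\pi\mapsto\pi\circ\mathrm{rev}$ flips all these inequalities, which would take admissible $\pi$ to the ``opposite'' set, not back to itself. The resolution is that, for $\beta_k\in\Delta_w^+$, the content $\beta_k=\alpha_i+\cdots+\alpha_j$ is \emph{multiplicity-free} and the interval $[i,j]$ has a specific parity pattern dictated by the list in Section \ref{IndecMod} / the description of $\Delta_w^+$ (each such interval has odd length and its endpoints have the parities forced by $M$). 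For those intervals the ``odd letters'' all commute with each other and the admissibility conditions amount to: every even letter sits to the right of both of its odd neighbours. Reversing the word and then re-sorting the (mutually commuting) odd letters back into increasing order — which in the shuffle algebra costs \emph{no} power of $v$ since non-adjacent simple roots are orthogonal — carries an admissible $\pi$ to another admissible $\pi'$, and this gives a bijection of the index set onto itself. I would verify this bijection case-by-case against the explicit intervals $[i,i]$, $[i-1,i+1]$, $[i-2,i+2]$, $[i-3,i+3]$, $[2,3]$, $[2,5]$, $[n-2,n-1]$, $[n-4,n-1]$ listed earlier; in each the combinatorics is a short finite check.

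Putting the two steps together: $\sigma$ multiplies the whole sum by the single scalar $v^{N(\beta_k)}$ (the reversal exponent, once the orthogonality of non-neighbouring simple roots is used to clear the re-sorting) and permutes the shuffle monomials among themselves, so $\sigma(E^*(\beta_k))=v^{N(\beta_k)}E^*(\beta_k)$, as claimed. The main obstacle I anticipate is precisely step two — getting the bijection on admissible permutations right and checking that the re-sorting of the commuting odd letters genuinely contributes no extra power of $v$ — together with the bookkeeping of sign/convention in the exponent so that it comes out exactly $N(\beta_k)$ rather than $-N(\beta_k)$. An alternative, more representation-theoretic route that avoids the shuffle combinatorics is to use that $\sigma$ acts on the canonical basis by $v^{N}$ on each element (a standard property of the bar-type antiautomorphism, e.g.\ via Leclerc \cite[Section 2.7]{Leclerc:Shuffles}) and that $E^*(\beta_k)$, being (a scalar multiple of) $X_{i,j}$ with multiplicity-free multidegree, is a dual canonical basis element; but since that fact is only established later for these particular $\beta_k$, the self-contained shuffle computation above is the cleaner option here.
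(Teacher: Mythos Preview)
Your proposal has a genuine gap at its very first step: the claim that $\sigma$ acts on a shuffle word $w[l_1,\ldots,l_r]$ by reversal (times a scalar) is false. You justify it by saying that $w[l_1,\ldots,l_r]$ is, up to a scalar, the image of $E_{l_1}\cdots E_{l_r}$ under the embedding $U_v(\mathfrak{n})\hookrightarrow\mathcal{F}$; but that embedding sends $E_{l_1}\cdots E_{l_r}$ to the shuffle product $w[l_1]*\cdots*w[l_r]$, which is a genuine linear combination of many words, not a single word. A one-line check already kills the reversal picture: for $\beta=\alpha_1+\alpha_2$ one has $E^*(\beta)=w[1,2]$ (a single word), and the Proposition forces $\sigma(w[1,2])=v^{-1}w[1,2]$; if $\sigma$ reversed words you would get a multiple of $w[2,1]$, which is a different basis element. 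So your entire ``reverse, then re-sort the odd letters'' mechanism is built on a wrong premise, and the re-sorting step is separately ill-posed: distinct words such as $w[1,3,2]$ and $w[3,1,2]$ are linearly independent basis vectors of $\mathcal{F}$ and cannot be identified just because $E_1E_3=E_3E_1$ in $U_v(\mathfrak{n})$.

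The paper's proof avoids all of this by invoking Leclerc's Lemma~7, whose content is precisely that $\sigma$ does \emph{not} reverse words but rather satisfies $\sigma(f)=v^{N(|f|)}\bar f$ for homogeneous $f$, where $\bar f$ is obtained by applying $v\mapsto v^{-1}$ to every coefficient in the shuffle-basis expansion; equivalently, $\sigma(f)=v^{N(|f|)}f$ if and only if all shuffle coefficients of $f$ lie in $\mathbb{Z}$ (or more generally are bar-invariant). Since by Remark~\ref{ShuffleExpForDuals} every coefficient of $E^*(\beta_k)$ is $0$ or $1$, the conclusion is immediate. If you want a self-contained argument, the honest route is to reprove Leclerc's lemma (showing that each individual shuffle word of degree $\nu$, viewed in $U_v(\mathfrak{n})$, is a $\sigma$-eigenvector with eigenvalue $v^{N(\nu)}$), not to attempt the reversal-and-reshuffle combinatorics, which cannot work.
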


\begin{proof}
Leclerc \cite[Lemma 7]{Leclerc:Shuffles} proves that a homogeneous element $f \in \mathcal{F}$ satisfies $\sigma(f)=v^{N(\vert f \vert)}f$ if and only if all coefficients in the expansion of $f$ in the basis of shuffles are invariant under $\sigma$. By Remark \ref{ShuffleExpForDuals} all coefficients are $0$ or $1$ in the case of $E^*(\beta_k)$.
\end{proof}

\begin{definition}
We define a partial order $\lhd$ on the parametrizing set $\mathbb{N}^{2n}$ of dual Poincar\'{e}-Birkhoff-Witt basis elements. For every $k$ with $1 \leq k \leq 2n$ let $\textbf{e}_k\in \mathbb{N}^{2n}$ be the vector satisfying $(\textbf{e}_k)_l=\delta_{k,l}$ for all $l$. For every integer $i$ with $1 \leq i \leq n$ let $k_i,l_i,m_i,n_i \in \left\lbrace 1,2,\ldots,2n\right\rbrace$ be the indices for which $\vert y_i\vert=\beta_{k_i}$, $\vert z_{i-1}\vert=\beta_{l_i}$, $\vert z_{i+1}\vert=\beta_{m_i}$, $\vert z_i\vert=\beta_{n_i}$. (Note that $k_1$ and $n_n$ are not defined. We put $\textbf{e}_{k_1}=\textbf{e}_{n_n}=0$.) Put $\textbf{v}_i=\textbf{e}_{k_i}-\textbf{e}_{l_i}-\textbf{e}_{m_i}+\textbf{e}_{n_i}$. Now we say that $\textbf{a},\textbf{b}\in\mathbb{N}^{2n}$ satisfy $\textbf{a}\lhd\textbf{b}$ if and only if $\textbf{b}-\textbf{a}\in\bigoplus_{i=1}^n\mathbb{N}\textbf{v}_i$.
\end{definition}

\begin{remark}
The straightening relations of Lemma \ref{StraighteningRelations} imply the following fact: If $\textbf{a} \in \mathbb{N}^{2n}$ and we expand $E^*(\beta_{2n})^{a_{2n}}E^*(\beta_{2n-1})^{a_{2n-1}}\cdots E^*(\beta_1)^{a_1}$ in the dual Poincar\'{e}-Birkhoff-Witt basis, then we get a $\mathbb{Q}(v)$-linear combination of $E[\textbf{b}]^*$ with $\textbf{b}\in S(\textbf{a})\cup \left\lbrace \textbf{a}\right\rbrace$. 
\end{remark}

\begin{definition}
For $\textbf{a}\in\mathbb{N}^{2n}$ put $S(\textbf{a})=\left\lbrace \textbf{b}\in\mathbb{N}^{2n} \colon \textbf{a}\lhd \textbf{b},\textbf{a}\neq\textbf{b}\right\rbrace$. 
\end{definition}

\begin{theorem}
\label{ExistenceOfDualCan}
There exist elements $B[\textbf{a}]^* \in U_v^+(w)$ parametrized by sequences $\textbf{a} \in \mathbb{N}^{2n}$ such that the set $\mathcal{B}^* = \left\lbrace B[\textbf{a}]^* \colon \textbf{a} \in \mathbb{N}^{2n} \right\rbrace$ is a basis of $U_v^+(w)$ and the following two properties hold.
\begin{enumerate}
\item[(1)] For every $\textbf{a} \in \mathbb{N}^{2n}$ we have $B[\textbf{a}]^*-E[\textbf{a}]^* \in \bigoplus_{\textbf{b}\in S(\textbf{a})}v^{-1}\mathbb{Z}[v^{-1}]E[\textbf{b}]^*$.
\item[(2)] For every $\textbf{a} \in \mathbb{N}^{2n}$ we have $\sigma(B[\textbf{a}]^*)=v^{N(\textbf{a})}B[\textbf{a}]^*$.
\end{enumerate}
The elements $B[\textbf{a}]^* \in U_v^+(w)$ are uniquely determined by these two properties.
\end{theorem}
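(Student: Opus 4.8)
The plan is to recognise Theorem~\ref{ExistenceOfDualCan} as an instance of Lusztig's lemma on bar-invariant bases, so the first task is to produce the appropriate semilinear involution. Fix a degree $\gamma\in Q^+$. All $\textbf{a}$ with $\deg E[\textbf{a}]^{*}=\gamma$ have the same norm $N(\gamma)$, because each $\textbf{v}_i$ defining the order $\lhd$ is homogeneous of degree $0$ (this is the degree shadow of the exchange relation $P_i=Y_iZ_i-Z_{i-1}Z_{i+1}$, as $\textbf{v}_i$ encodes $|y_i|+|z_i|=|z_{i-1}|+|z_{i+1}|$) and $N$ of Definition~\ref{Norm} depends only on the degree. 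On the finite-dimensional component $U_v^{+}(w)_\gamma$ I may therefore set $\Psi:=v^{-N(\gamma)}\sigma$; since $\sigma$ is a $\mathbb{Q}$-linear, $v\mapsto v^{-1}$ semilinear algebra involution and $v^{-N(\gamma)}$ is a scalar, $\Psi$ is again a semilinear involution with $\Psi^2=\mathrm{id}$, and property~(2) of the theorem is exactly the statement $\Psi(B[\textbf{a}]^{*})=B[\textbf{a}]^{*}$.

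Next I would establish that $\Psi$ is unitriangular for $\lhd$. By Proposition~\ref{DualPBWFormula} one has $E[\textbf{a}]^{*}=v^{-b(\textbf{a})}E^{*}(\beta_1)^{a_1}\cdots E^{*}(\beta_{2n})^{a_{2n}}$; applying $\sigma$, which reverses products and satisfies $\sigma(E^{*}(\beta_k))=v^{N(\beta_k)}E^{*}(\beta_k)$ (the Proposition following Definition~\ref{Norm}), yields a scalar multiple of the anti-ordered product $E^{*}(\beta_{2n})^{a_{2n}}\cdots E^{*}(\beta_1)^{a_1}$. Re-expanding this in the dual Poincar\'e--Birkhoff--Witt basis through the dual straightening relations produces only basis vectors $E[\textbf{b}]^{*}$ with $\textbf{b}\in S(\textbf{a})\cup\{\textbf{a}\}$ (the Remark preceding the definition of $S(\textbf{a})$, a direct consequence of Lemma~\ref{StraighteningRelations}), and with structure constants in $\mathbb{Z}[v,v^{-1}]$ since those relations have Laurent-polynomial coefficients. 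The coefficient of $E[\textbf{a}]^{*}$ is a single power of $v$: tracking the leading term through the commutation exponent of Remark~\ref{HighestExponent} it equals $v^{\,2b(\textbf{a})+\sum_k a_kN(\beta_k)+\sum_{i<j}a_ia_j(\beta_i,\beta_j)}$, and a short computation using $(\beta_k,\beta_k)=2$ identifies this exponent with $N(\textbf{a})$ of Definition~\ref{Norm}. Hence $\Psi(E[\textbf{a}]^{*})=E[\textbf{a}]^{*}+\sum_{\textbf{b}\in S(\textbf{a})}r_{\textbf{a},\textbf{b}}E[\textbf{b}]^{*}$ with $r_{\textbf{a},\textbf{b}}\in\mathbb{Z}[v,v^{-1}]$. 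I would also observe that $S(\textbf{a})$ is finite: adding any $\textbf{v}_i$ strictly lowers the coordinates it indexes negatively, so remaining in $\mathbb{N}^{2n}$ caps the number of steps.

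The remainder is the formal Lusztig argument, carried out by induction on the finite poset $S(\textbf{a})\cup\{\textbf{a}\}$. If $S(\textbf{a})=\emptyset$ put $B[\textbf{a}]^{*}=E[\textbf{a}]^{*}$, which is already $\Psi$-fixed. Otherwise the $B[\textbf{b}]^{*}$ with $\textbf{b}\in S(\textbf{a})$ are defined and $\Psi$-invariant, so one may rewrite $\Psi(E[\textbf{a}]^{*})=E[\textbf{a}]^{*}+\sum_{\textbf{b}\in S(\textbf{a})}\rho_{\textbf{b}}B[\textbf{b}]^{*}$ with $\rho_{\textbf{b}}\in\mathbb{Z}[v,v^{-1}]$; applying $\Psi$ once more and using $\Psi^2=\mathrm{id}$ forces $\overline{\rho_{\textbf{b}}}=-\rho_{\textbf{b}}$, where the bar is the substitution $v\mapsto v^{-1}$. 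The elementary fact that for such $\rho$ there is a unique $c\in v^{-1}\mathbb{Z}[v^{-1}]$ with $c-\bar c=\rho$ lets me define $B[\textbf{a}]^{*}=E[\textbf{a}]^{*}+\sum_{\textbf{b}\in S(\textbf{a})}c_{\textbf{b}}B[\textbf{b}]^{*}$; this is $\Psi$-invariant by construction, and after re-expanding the $B[\textbf{b}]^{*}$ in the $E[\,\cdot\,]^{*}$ (using $S(\textbf{b})\subseteq S(\textbf{a})$ for $\textbf{b}\in S(\textbf{a})$) it lies in $E[\textbf{a}]^{*}+\bigoplus_{\textbf{b}\in S(\textbf{a})}v^{-1}\mathbb{Z}[v^{-1}]E[\textbf{b}]^{*}$, so (1) and (2) both hold. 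For uniqueness, the difference of two candidates lies in $\bigoplus_{\textbf{b}\in S(\textbf{a})}v^{-1}\mathbb{Z}[v^{-1}]E[\textbf{b}]^{*}$ and is $\Psi$-invariant; inspecting the $E[\textbf{b}_0]^{*}$-coefficient for $\textbf{b}_0$ minimal in its support and using unitriangularity shows that coefficient is a bar-invariant element of $v^{-1}\mathbb{Z}[v^{-1}]$, hence $0$, so the support is empty.

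The step I expect to be the main obstacle is the second paragraph: verifying that the anti-ordered product re-straightens only along the $\textbf{v}_i$-directions, so that $\Psi$ is triangular for precisely the order $\lhd$ appearing in~(1), and pinning the diagonal coefficient down to $v^{N(\textbf{a})}$. Both of these are already prepared by Lemma~\ref{StraighteningRelations}, Remark~\ref{HighestExponent}, the explicit dual straightening relations, and Proposition~\ref{DualPBWFormula}, so this is a bookkeeping exercise rather than a new idea; alternatively the whole statement may be quoted as the special case of the general constructions of Lusztig~\cite{Lusztig:QuantumGroups} and Leclerc~\cite{Leclerc:Shuffles}.
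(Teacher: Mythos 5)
Your proposal is correct and follows essentially the same route as the paper: triangularity of $\sigma$ with respect to $\lhd$ via the straightening relations, identification of the diagonal coefficient with $v^{N(\textbf{a})}$ (your explicit exponent $2b(\textbf{a})+\sum_k a_kN(\beta_k)+\sum_{k<l}a_ka_l(\beta_k,\beta_l)$ does simplify to $N(\textbf{a})$ using $(\beta_k,\beta_k)=2$), the standard inductive solution of $c-\bar c=\rho$ for skew-bar-invariant $\rho$, and the same uniqueness argument. The only cosmetic differences are that you package $v^{-N(\gamma)}\sigma$ as a single involution $\Psi$ and induct on the finite poset $S(\textbf{a})\cup\{\textbf{a}\}$, whereas the paper refines $\lhd$ to a total order on each graded piece $S_\gamma$ and runs a backward induction.
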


\begin{proof}
Note that if $\textbf{a} \lhd \textbf{b}$, then $\vert E[\textbf{a}]^* \vert =\sum_{k=1}^{2n}a_k\beta_k= \sum_{k=1}^{2n}b_k\beta_k= \vert E[\textbf{b}]^* \vert$ since the straightening relations are relations in a $R$-graded algebra. For $\gamma \in R$ in the root lattice, consider the (finite) set $S_{\gamma} \subset \mathbb{N}^{2n}$ of all $\textbf{a}=(a_i)_{1 \leq i \leq 2n}$ with $\sum_{k=1}^{2n}a_k\beta_k=\gamma$. We extend the partial order $\lhd$ on $S_{\gamma}$ to a total order $<$. Let $\textbf{a}_1<\textbf{a}_2<\ldots<\textbf{a}_m$ be the elements of $S_{\gamma}$ written in increasing order. Now we prove by backward induction that for every $k=m,m-1,\ldots,2,1$ there exist linearly indpendent $B[\textbf{a}_k]$, $B[\textbf{a}_{k+1}],\ldots, B[\textbf{a}_m]$ satisfying (1) and (2).

Put $B[\textbf{a}_m]=E[\textbf{a}_m]$. It clearly satisfies property (1). Let $\textbf{a}_m=(a_1,a_2,\ldots,a_{2n})$. Since there are no $\textbf{b}\in S_{\gamma}$ such that $\textbf{a}_m < \textbf{b}$, the dual Poincar\'e-Birkhoff-Witt element $E[\textbf{a}_m]^*$ cannot be straightened, i.e., all $E^*(\beta_k)$ for which $a_k\neq 0$ are $v$-commutative. Therefore, by Remark \ref{HighestExponent} we have
\begin{align*}
\sigma(E[\textbf{a}_m]^*)&=\sigma(v^{-b(\textbf{a})}E^*(\beta_1)^{a_1}E^*(\beta_2)^{a_2}\cdots E^*(\beta_{2n})^{a_{2n}})\\
&=v^{b(\textbf{a})}v^{\sum_{k=1}^{2n}a_kN(\beta_k)}E^*(\beta_{2n})^{a_{2n}}\cdots E^*(\beta_2)^{a_2}E^*(\beta_1)^{a_1}\\
&=v^{b(\textbf{a})}v^{\sum_{k=1}^{2n}\frac12 a_k(\beta_k,\beta_k)-\sum_{k=1}^{2n}a_k\| \beta _k\|}\cdot v^{\sum_{k<l}(\beta_k,\beta_l)}E^*(\beta_1)^{a_1}E^*(\beta_2)^{a_2}\cdots E^*(\beta_{2n})^{a_{2n}}\\
&=v^{\frac12 (\sum_{k=1}^{2n} \beta_k,\sum_{k=1}^{2n} \beta_k)-\sum_{k=1}^{2n}a_k\| \beta _k\|}E[\textbf{a}_m]^*\\
&=v^{N(\textbf{a}_m)}E[\textbf{a}_m]^*.
\end{align*}
Here $\| \beta_k\|$ denotes the sum of the coefficients of $\beta_k$ when expanded as a $\mathbb{Z}$-linear combination of simple roots as in Definition \ref{Norm}. Hence, the variable $E[\textbf{a}_m]$ also satisfies property (2).

Now let $1 \leq k <m$ and assume that properties (1) and (2) hold for $\textbf{a}_{k+1}$, $\textbf{a}_{k+2},\ldots,\textbf{a}_m$. We expand $\sigma(E[\textbf{a}_k]^*)$ in the dual Poincar\'{e}-Birkhoff-Witt basis. Note that by the same argument as above (and ignoring terms of lower order) we see that the coefficient of the leading term $E[\textbf{a}_k]^*$ is $v^{N(\textbf{a}_k)}$. Thus, we have
\begin{align*}
\sigma(E[\textbf{a}_k]^*)=v^{N(\textbf{a}_k)}E[\textbf{a}_k]^*+\sum_{k<l\leq m} f_l E[\textbf{a}_l]^*
\end{align*}
for some $f_l \in \mathbb{Z}[v,v^{-1}]$. By induction hypothesis every $B[\textbf{a}_l]^*$ with $l>k$ is a $\mathbb{Z}[v,v^{-1}]$-linear combination of $E[\textbf{a}_l']^*$ with $l'>l$. By solving an upper triangular linear system of equations we see that every $E[\textbf{a}_l]^*$ with $l>k$ is a $\mathbb{Z}[v,v^{-1}]$-linear combination of $B[\textbf{a}_l']^*$ with $l'>l$. Hence, we may write
\begin{align*}
\sigma(E[\textbf{a}_k]^*)=v^{N(\textbf{a}_k)}E[\textbf{a}_k]^*+\sum_{k<l\leq m} g_l B[\textbf{a}_l]^*
\end{align*}
for some $g_l \in \mathbb{Z}[v,v^{-1}]$. We apply the antiinvolution $\sigma$ to the last equation:
\begin{align*}
E[\textbf{a}_k]^*=v^{-N(\textbf{a}_k)}\sigma(E[\textbf{a}_k]^*)+\sum_{k<l\leq m} v^{N(\textbf{a}_l)}\sigma(g_l) B[\textbf{a}_l]^*.
\end{align*}
Note that $N(\textbf{a}_k)=N(\textbf{a}_l)$ for all $l$. Comparing coefficients yields $v^{2N(\textbf{a}_l)}\sigma(g_l)=-g_l$. It follows that $\sigma (v^{-N(\textbf{a}_l)}g_l)=-v^{-N(\textbf{a}_l)}g_l$. Thus, we may write $v^{-N(\textbf{a}_l)}g_l=h_l-\sigma(h_l)$ for some $h_l \in v^{-1}\mathbb{Z}[v^{-1}]$. Now put
\begin{align*}
B[\textbf{a}_k]^*=E[\textbf{a}_k]^*+\sum_{k<l\leq m} h_l B[\textbf{a}_l]^*.
\end{align*}
It is easy to see that properties (1) and (2) are true for $B[\textbf{a}_k]^*$ and that $B[\textbf{a}_k]^*, B[\textbf{a}_{k+1}]^*,$ $\ldots,B[\textbf{a}_m]^*$ are linearly independent.

For the uniqueness, suppose that $k$ is some index such that there are variables $B[\textbf{a}_k]_1^*$ and $B[\textbf{a}_k]_2^*$ fulfilling the two properties of the theorem. Then their difference $B[\textbf{a}_k]_1^*-B[\textbf{a}_k]_2^*\in\bigoplus_{l>k}v^{-1}\mathbb{Z}[v^{-1}]B[\textbf{a}_l]^*$. Application of $\sigma$ and multiplication with $v^{-N(\textbf{a}_k)}$ afterwards yields $B[\textbf{a}_k]_1^*-B[\textbf{a}_k]_2^*\in\bigoplus_{l>k}v\mathbb{Z}[v]B[\textbf{a}_l]^*$, so $B[\textbf{a}_k]_1^*=B[\textbf{a}_k]_2^*$.
\end{proof}

\begin{remark}
It is known that the dual of Lusztig's canonical basis under Kashiwara's bilinear form obeys the two properties of Theorem \ref{ExistenceOfDualCan}, compare Leclerc \cite[Proposition 39]{Leclerc:Shuffles}. By uniqueness, the set $\mathcal{B}^*=\left\lbrace B[\textbf{a}]^* \colon \textbf{a} \in \mathbb{N}^{2n} \right\rbrace$ is the dual of Lusztig's canonical basis, or the {\it dual canonical basis} for short. 
\end{remark}

We call $E[\textbf{a}]^*$ (for $\textbf{a} \in \mathbb{N}^{2n}$) the {\it leading term} in the expansion of $B[\textbf{a}]^*$ in the Poincar\'{e}-Birkhoff-Witt basis. In what follows we use the convention $z_0=z_{n+1}=1$. Prominent elements in $\mathcal{B}^*$ are 
\begin{align*}
&p_i=y_iz_i-v^{-1}z_{i-1}z_{i+1}, &&{\rm for} \ i \ {\rm odd \ with} \ 1 \leq i \leq n,\\
&p_i=z_iy_i-v^{-1}z_{i-1}z_{i+1}, &&{\rm for} \ i \ {\rm even \ with} \ 2 \leq i \leq n-1.
\end{align*}
The first property of Theorem \ref{ExistenceOfDualCan} is obvious and the second follows easily from a calculation using the straightening relations. The variables are $v$-deformations of the $\delta$-functions of the $\mathcal{C}_M$-projective rigid $\Lambda$-modules from Section \ref{PreprojAlgRigidMod}. The non-deformed $\delta$-function associated with these modules are frozen cluster variables in Gei\ss -Leclerc-Schr\"{o}er's cluster algebra \ACM, compare Section \ref{SubSection:ClusterAlgebra}. 

\begin{lemma}
\label{PCommutation}
For every pair $(i,k)$ of natural numbers such that $1 \leq i \leq n$ and $1 \leq k \leq 2n$ the elements $p_i$ and $E^*(\beta_k)\in U_v^{+}(w)$ are $v$-commutative, i.e., there is an integer $a$ such that $p_iE^*(\beta_k)=v^aE^*(\beta_k)p_i$.
\end{lemma}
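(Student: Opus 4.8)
The plan is to reduce the statement to a finite check using the graded structure and the explicit straightening relations already established. First I would observe that both $p_i$ and $E^*(\beta_k)$ are homogeneous elements of the $R$-graded algebra $U_v^+(w)$: indeed $|p_i| = |y_i| + |z_i| = |z_{i-1}| + |z_{i+1}|$ by the determinantal identity (\ref{InitialExchange}) read at the quantized level, and $|E^*(\beta_k)| = \beta_k$. Since $U_v^+(w)$ sits inside the quantum shuffle algebra $\mathcal{F}$, which is graded by the root lattice, any commutation relation $p_i E^*(\beta_k) = v^a E^*(\beta_k) p_i$ that holds must have its exponent $a$ forced; so the real content is that $p_i$ and $E^*(\beta_k)$ commute \emph{up to a scalar}, with no ``error terms''.

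The key step is to expand $p_i E^*(\beta_k)$ and $E^*(\beta_k) p_i$ using the definition $p_i = y_i z_i - v^{-1} z_{i-1} z_{i+1}$ (odd case; the even case is symmetric) together with the straightening relations listed in the Remark following Proposition \ref{DualPBWFormula}. For each of the four factors $y_i, z_i, z_{i-1}, z_{i+1}$ and each generator $E^*(\beta_k) \in \{y_j, z_j\}$, the straightening relations tell us either that the two are strictly $v$-commutative or that an extra term appears (the relations with a $(v-v^{-1})$ coefficient, e.g. $z_i y_i = y_i z_i + (v-v^{-1}) z_{i-1} z_{i+1}$, and the ones with a constant term like $z_1 y_1 = v^{-1} y_1 z_1 + (1-v^{-2}) z_2$). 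The miracle to verify is that whenever an extra term is produced in commuting $y_i z_i$ past $E^*(\beta_k)$, the \emph{same} extra term is produced (with opposite sign after rescaling) in commuting $z_{i-1} z_{i+1}$ past $E^*(\beta_k)$, so that the combination $p_i = y_i z_i - v^{-1} z_{i-1} z_{i+1}$ commutes cleanly. Concretely I would organize the check by the distance $|k' - i|$ where $\beta_k$ is supported on the interval $[i', j']$: for most $k$ all four factors simply $v$-commute with $E^*(\beta_k)$ and nothing happens; the interesting cases are the few where $E^*(\beta_k)$ is one of $y_{i\pm 1}, y_{i\pm 2}, y_{i\pm 3}$ or $z_{i\pm 1}, z_{i\pm 2}, z_{i\pm 3}$, i.e.\ $\beta_k$ adjacent to $\beta_i$ in the Auslander--Reiten picture of Figure \ref{fig:AR}, and there the cancellation must be checked by hand.

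I expect the main obstacle to be bookkeeping rather than conceptual: one must carefully track the several boundary cases ($i = 1$, $i = n$, small $n$, and the generators $x_2, x_{n-1}, w_1, w_n$ which have anomalous straightening relations) and confirm that in each the two error terms match. A cleaner alternative, which I would try first, is to use the interpretation of $p_i$ as a quantized $\delta$-function of a $\mathcal{C}_M$-projective (equivalently, frozen) $\Lambda$-module: frozen variables in a quantum cluster algebra are central up to $v$-powers with respect to the quantum torus, and this property should transfer. Failing a ready-made citation for that, one can instead expand everything in the shuffle basis via Remark \ref{ShuffleExpForDuals} and Lemma \ref{EulerShuffle}: in $\mathcal{F}$ the product is the explicit quantum shuffle, and $p_i$, being supported on a short interval, has a very simple shuffle expansion, so the commutation becomes a finite combinatorial identity among shuffles that can be dispatched uniformly. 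Either route reduces the claim to a bounded computation, and the graded argument in the first paragraph guarantees that once we know $p_i E^*(\beta_k) - v^a E^*(\beta_k) p_i$ has no term of degree $|p_i| + \beta_k$ for the appropriate $a$, it is identically zero.
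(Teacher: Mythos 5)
Your main route (expanding $p_i=y_iz_i-v^{-1}z_{i-1}z_{i+1}$, pushing each factor past $E^*(\beta_k)$ via the straightening relations, and checking that the $(v-v^{-1})$ error terms produced by $y_iz_i$ and by $z_{i-1}z_{i+1}$ cancel, with only the generators indexed near $i$ requiring any work) is exactly the paper's proof, so the proposal is correct in approach; the remaining task is just the case-by-case computation you defer. One caution: your closing ``graded argument'' is vacuous, since every term of $p_iE^*(\beta_k)-v^aE^*(\beta_k)p_i$ has degree $\vert p_i\vert+\beta_k$ anyway, so the cancellations genuinely must be verified by hand (and your alternative via centrality of frozen quantum cluster variables would be circular, as the quantum cluster structure is built \emph{from} this lemma).
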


\begin{proof}
First of all, we claim that $(\vert z_i\vert,\vert y_i\vert)$ is equal to $-1$ if $i\in\{1,n\}$ and equal to $0$ if $2\leq i\leq n-1$. For a proof, note that $(\vert z_1\vert,\vert y_1\vert)=(\alpha_2+\alpha_3,\alpha_1)=-1$, and similarly $(\vert z_n\vert,\vert y_n\vert)=-1$. In the same way we see that $(\vert z_2\vert,\vert y_2\vert)=(\alpha_2+\alpha_3+\alpha_4+\alpha_5,\alpha_1+\alpha_2+\alpha_3)=0$, and similarly $(\vert z_{n-1}\vert,\vert y_{n-1}\vert)=0$. Furthermore, we have $(\alpha_i+\alpha_{i+1}+\ldots+\alpha_j,\alpha_k)=0$ for all $i+1\leq k\leq j-1$ from which it easily follows that $(\vert z_i\vert,\vert y_i\vert)=0$ for $i\in\{3,4,\ldots,n-2\}$.

The claim implies that for odd $i$ in the straightening relation for $z_iy_i$ the coefficient in front of $y_iz_i$ is $v^{-(\vert z_i\vert,\vert y_i\vert)}$. The same is true for the coefficent in front of $z_iy_i$ in the straightening relation for $y_iz_i$. Let $i$ be an odd integer such that $1 \leq i \leq n$. Note that $$p_i=y_iz_i-v^{-1}z_{i-1}z_{i+1}=v^{-(\vert z_i\vert,\vert y_i\vert)}z_iy_i-vz_{i-1}z_{i+1}$$ by property (2) of Theorem \ref{ExistenceOfDualCan}. From the straightening relations of Lemma \ref{StraighteningRelations} it is clear that $p_i$ commutes with every $y_j$ with $\vert j-i \vert \geq 4$ and with every $z_j$ with $\vert j-i \vert \geq 3$. If $i \geq 5$, then $y_{i-3}p_i=y_{i-3}y_iz_i-v^{-1}y_{i-3}z_{i-1}z_{i+1}=vy_iz_iy_{i-3}-z_{i-1}z_{i+1}y_{i-3}=vp_iy_{i-3}$. Now assume that $i \geq 3$. We have $y_{i-2}p_i=y_{i-2}y_iz_i-v^{-1}y_{i-2}z_{i-1}z_{i+1}=v^{-1}y_iz_iy_{i-2}-v^{-2}z_{i-1}z_{i+1}y_{i-2}=v^{-1}p_iy_{i-2}$. Furthermore, the we see that $z_{i-2}p_i=z_{i-2}y_iz_i-v^{-1}z_{i-2}z_{i-1}z_{i+1}=vy_iz_iz_{i-2}-z_{i-1}z_{i+1}z_{i-2}=vp_iz_{i-2}$. The calculation 
\begin{align*}
y_{i-1}p_i&=y_{i-1}y_iz_i-v^{-1}y_{i-1}z_{i-1}z_{i+1}\\
&=v^{(\vert y_{i-1}\vert,\vert y_i\vert)}\Big(y_iy_{i-1}+(v-v^{-1})z_{i+1}z_{i-2}\Big)z_i\\&\quad-v^{-1}\Big(z_{i-1}y_{i-1}+(v-v^{-1})z_iz_{i-2}\Big)z_{i+1}\\
&=v^{(\vert y_{i-1}\vert,\vert y_i\vert)}(vy_iz_iy_{i-1}-z_{i-1}z_{i+1}y_{i-1})=v^{1+(\vert y_{i-1}\vert,\vert y_i\vert)}p_iy_{i-1}
\end{align*}
shows that $p_i$ also $v$-commutes with $y_{i-1}$. It also $v$-commutes with $z_{i-1}$ as the calculation shows: $z_{i-1}p_i=z_{i-1}y_iz_i-v^{-1}z^2_{i-1}z_{i+1}=y_iz_iz_{i-1}-v^{-1}z_{i-1}z_{i+1}z_{i-1}=p_iz_{i-1}$. Now assume that $i \geq 1$.The following equation is true:
\begin{align*}
y_ip_i&=y^2_iz_i-v^{-1}y_iz_{i-1}z_{i+1}\\
&=y_i\Big(v^{-(\vert y_i\vert,\vert z_i\vert)}z_iy_i+(v^{-1}-v)z_{i-1}z_{i+1}\Big)-v^{-1}y_iz_{i-1}z_{i+1}\\
&=v^{-(\vert y_i\vert,\vert z_i\vert)}y_iz_iz_i-vy_iz_{i-1}z_{i+1}\\
&=v^{-(\vert y_i\vert,\vert z_i\vert)}(y_iz_i-v^{-1}z_{i-1}z_{i+1})y_i=v^{-(\vert y_i\vert,\vert z_i\vert)}p_iy_i.
\end{align*}
Finally we see that $z_ip_i=z_iy_iz_i-v^{-1}z_iz_{i-1}z_{i+1}=v^{(\vert y_i\vert,\vert z_i\vert)}(v^{-(\vert y_i\vert,\vert z_i\vert)}z_iy_i-vz_{i-1}z_{i+1})z_i=v^{(\vert y_i\vert,\vert z_i\vert)}p_iz_i$. The $v$-commutativity relations of $p_i$ with elements with index $j>i$ are proved in the same way.

The case of even $i$ can be handled with similar arguments.
\end{proof}

\begin{remark}
\begin{enumerate}
\item The $v$-commutativity relations of Lemma \ref{PCommutation} will be crucial for the construction of the initial quantum seed of $\mathcal{A}(w)_v$. Another verification of $v$-commutativity relations for the initial seed is due to Kimura \cite[Section 6]{Kimura}. 
\item Multiplicative properties of (dual) canonical basis elements have also been studied by Reineke \cite{Reineke:Multi}.
\item As observed by Leclerc, the $v$-deformations of the $\delta$-functions of $\mathcal{C}_M$-projective rigid $\Lambda$-modules also $v$-commute with the generators of $U^+_v(w)$ in the Kronecker cases for $w$ of length $4$, see the author \cite[Section 4.1]{Lampe}.
\item A consideration of the leading terms of the two occuring variables in Lemma \ref{PCommutation} is sufficient to determine the integer $a$. 
\end{enumerate}
\end{remark}

\begin{remark}
The techniques in this section work with the same proofs for the case $U_v^+(w')$ as well. We use a similar notation $y'_i,z_i'\in U_v^+(w')$ for the elements dual to $u'_i,v'_i,w'_i,x_i'\in U_v^+(w)$ (for appropriate indices $i$). The straightening relations for the dual variables can be computed using the same methods.
\end{remark}

\section{The quantum cluster algebra structure induced by the dual canonical basis}

In this section we are going to prove that the integral form $\bigoplus_{\textbf{a}\in\mathbb{N}^{2n}}\mathbb{Z}[v^{\pm \frac12}]E[\textbf{a}]^*$ is a quantum cluster algebra in the sense of Bereinstein-Zelevinsky \cite{BZ:QuantumClusterAlg}. The corresponding non-quantized cluster algebra is Gei\ss -Leclerc-Schr\"oer's \cite{GLS:Unipotent} cluster algebra $\mathcal{A}(w)$. 

Natural Quantum cluster algebra structures have only been observed in very few cases, see for example Grabowski-Launois \cite{GrabowskiLaunois}, Rupel \cite{Rupel:QuantumCCFormula}, and the author \cite{Lampe}. For a study of bases of quantum cluster algebras of type $\tilde{A}_1^{(1)}$ see Ding-Xu \cite{DingXu}.

\begin{definition}
\label{DefDelta}
For $1 \leq i \leq j \leq n$ define $\Delta_{i,j}^v \in \mathcal{B}^*$ to be the dual canonical basis element with leading term $\prod_{i \leq r \leq j,r \ {\rm odd}}y_r\prod_{i \leq r \leq j, \ {r \rm even}}y_r$.
\end{definition}

\begin{remark}
\label{ExamplesOfDualCan}
We provide some examples of elements in the dual canonical basis $\mathcal{B}^*$ of the form $\Delta_{i,j}^v$ with $1 \leq i \leq j \leq n$. We focus on examples where the interval $[i,j]$ is small, i.e., $j \leq i+2$. First of all, we clearly have $\Delta_{i,i}^v=y_i$ for all $i$. Furthermore, an elementary calculation using the straightening relations shows that: $\Delta_{1,2}^v=y_1y_2-v^{-1}z_3=vy_2y_1-vz_3$, $\Delta_{n-1,n}^v=y_ny_{n-1}-v^{-1}z_{n-2}=vy_{n-1}y_n-vz_{n-2}$, and that for $2 \leq i \leq n-2$ \begin{align*}
\Delta_{i,i+1}^v=
\begin{cases}
y_iy_{i+1}-v^{-1}z_{i-1}z_{i+2}=y_{i+1}y_i-vz_{i+2}z_{i-1},&{\rm if} \ i \ {\rm is \ odd};\\
y_{i+1}y_i-v^{-1}z_{i+2}z_{i-1}=y_iy_{i+1}-vz_{i-1}z_{i+2},&{\rm if} \ i \ {\rm is \ even}.\\
\end{cases}
\end{align*}
Recall the convention $z_0=z_{n+1}=1$. The formulae simplify to $\Delta_{i,i+1}^v=y_iy_{i+1}-v^{-1}z_{i-1}z_{i+2}$ for odd $i$ and $\Delta_{i,i+1}^v=y_{i+1}y_i-v^{-1}z_{i+2}z_{i-1}$ for even $i$. With the same convention we can compute:
\begin{align*}
\Delta_{1,3}&=y_1y_3y_2-v^{-1}y_1z_4z_1-v^{-1}y_3z_3+v^{-2}z_2z_4\\
&=vy_2y_1y_3-vz_3y_3-v^2z_1z_4y_1+v^2z_2z_4,\\
\Delta_{n-2,n}&=y_ny_{n-2}y_{n-1}-v^{-1}y_nz_{n-3}z_n-v^{-1}y_{n-2}z_{n-2}+v^{-2}z_{n-1}z_{n-3}\\
&=vy_{n-1}y_ny_{n-2}-vz_{n-2}y_{n-2}-v^2z_nz_{n-3}y_n+v^2z_{n-1}z_{n-3}.
\end{align*}
For odd $i$ such that $3 \leq i \leq n-4$ we have:
\begin{align*}
\Delta_{i,i+2}&=y_iy_{i+2}y_{i+1}-v^{-1}y_iz_{i+3}z_i-v^{-1}y_{i+2}z_{i-1}z_{i+2}+v^{-2}z_{i-1}z_{i+1}z_{i+3}\\
&=y_{i+1}y_{i+2}y_i-vz_iz_{i+3}y_i-vz_{i+2}z_{i-1}y_{i+2}+v^2z_{i-1}z_{i+1}z_{i+3}.
\end{align*}
For odd $i$ such that $3 \leq i \leq n-4$ we have:
\begin{align*}
\Delta_{i,i+2}&=y_{i+1}y_iy_{i+2}-v^{-1}z_iz_{i+3}y_i-v^{-1}z_{i+2}z_{i-1}y_{i+2}+v^2z_{i-1}z_{i+1}z_{i+3}\\
&=y_iy_{i+2}y_{i+1}-vy_iz_{i+3}z_i-vy_{i+2}z_{i+2}z_{i-1}+v^2z_{i-1}z_{i+1}z_{i+3}.
\end{align*}
\end{remark}

\begin{remark}
In what follows we prove recursions for the $\Delta_{i,j}^v$ with $1 \leq i \leq j \leq n$. The formulae turn out to be quantized versions of the formulae for the $\Delta_{i,j}$ from Section \ref{DescriptionOfClusterVariables}. The quantized recursions will be crucial for the verification of the quantum cluster algebra struture on $U_v^{+}(w)$ in the next section. 
\end{remark}

To formulate the recursion effectively the following definitions are helpful. First of all we introduce the convention $\Delta_{i,i-1}^v=1$ for all $i$. Furthermore, we give the following two definitions.

\begin{definition}
For $1 \leq i,j \leq n$ put 
\begin{align*}
s_{i,j}=\sum_{i \leq s \leq j}\vert y_s \vert; \quad
o_{i,j}=\sum_{\genfrac{}{}{0pt}{}{i \leq s \leq j}{s {\rm \ odd}}}\vert y_s \vert; \quad
e_{i,j}=\sum_{\genfrac{}{}{0pt}{}{i \leq s \leq j}{s {\rm \ even}}}\vert y_s \vert.
\end{align*} 
\end{definition}

\begin{definition}
For all $i,j$ with $1 \leq i,j \leq n$ and $j-i \geq 2$ define
\begin{align*}
A_{i,j}=\begin{cases}
-1, & {\rm if \ }j-i \leq 3;\\
-2, & {\rm if \ }j-i \geq 4.
\end{cases}
\end{align*}
\end{definition}

\begin{lemma}
\label{FirstLemma}
For all pairs $(i,j)$ of integers such that $1 \leq i \leq j \leq n$ the following equation is true:
\begin{align*}
N(s_{i,j})-N(s_{i,j-1})-N(\vert y_j\vert)=
\begin{cases}(\vert y_j\vert,o_{i,j-1}), & {\rm if \ } j {\rm \ is \ even};\\
(\vert y_j\vert,e_{i,j-1}), & {\rm if \ } j {\rm \ is \ odd}.
\end{cases}
\end{align*}
\end{lemma}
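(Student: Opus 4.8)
The plan is to exploit that the norm $N$ of Definition~\ref{Norm} is a quadratic form for the symmetric bilinear form $(\cdot,\cdot)$. Writing $\mu=\sum_i d_i\alpha_i$ and $\nu=\sum_i e_i\alpha_i$ in the root lattice, the definition $N(\mu)=\tfrac12(\mu,\mu)-\sum_i d_i$ together with bilinearity of $(\cdot,\cdot)$ gives at once
\[
N(\mu+\nu)=N(\mu)+N(\nu)+(\mu,\nu).
\]
First I would apply this with $\mu=s_{i,j-1}$ and $\nu=\vert y_j\vert$, using $s_{i,j}=s_{i,j-1}+\vert y_j\vert$, so that
\[
N(s_{i,j})-N(s_{i,j-1})-N(\vert y_j\vert)=(s_{i,j-1},\vert y_j\vert)=(o_{i,j-1},\vert y_j\vert)+(e_{i,j-1},\vert y_j\vert).
\]
Comparing with the two cases of the claimed formula, it therefore suffices to prove the orthogonality statement: $(\vert y_s\vert,\vert y_j\vert)=0$ whenever $s$ and $j$ have the same parity and $i\le s<j\le n$. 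Indeed this kills the even part $e_{i,j-1}$ when $j$ is even (all its summands $\vert y_s\vert$ have $s$ even, $s<j$) and the odd part $o_{i,j-1}$ when $j$ is odd, leaving exactly the stated right-hand side.

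Next I would establish this orthogonality. If $s$ and $j$ are both odd, then by Lemma~\ref{GeneratorsAreX} we have $\vert y_s\vert=\alpha_s$ and $\vert y_j\vert=\alpha_j$, and since $\vert s-j\vert\ge2$ we get $(\alpha_s,\alpha_j)=a_{sj}=0$. If $s$ and $j$ are both even, I would use the explicit dimension vectors: by Section~\ref{IndecMod} and Lemma~\ref{GeneratorsAreX} the degree $\vert y_j\vert$ is the dimension vector of $\tau(I_j)$, an interval $\vert y_j\vert=\alpha_a+\alpha_{a+1}+\cdots+\alpha_b$, which for $j$ in the generic range is $[a,b]=[j-3,j+3]$, and similarly for $\vert y_s\vert$. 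Using the elementary identity $(\alpha_c+\cdots+\alpha_d,\alpha_k)=0$ for $c<k<d$ — already invoked in the proof of Lemma~\ref{PCommutation} — the pairing $(\vert y_s\vert,\vert y_j\vert)=\sum_{p=a'}^{b'}(\alpha_p,\vert y_j\vert)$, where $[a',b']$ is the interval of $\vert y_s\vert$, receives contributions only from the two endpoints $a,b$ of $[a,b]$ (each contributing $+1$) and their neighbours $a-1,b+1$ (each contributing $-1$). For $s<j$ of equal parity in the generic range one has $b'=s+3<j+3=b$, so $[a',b']$ misses $b$ and $b+1$; and being an interval it contains the consecutive pair $\{a-1,a\}$ either entirely — contributing $+1-1=0$ — or not at all, since containing exactly one would force $b'=a-1$, i.e. $s=j-7$, contradicting that $s-j$ is even. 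Hence $(\vert y_s\vert,\vert y_j\vert)=0$, and the two displays above then yield the lemma.

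I expect the genuine work to be the boundary bookkeeping rather than the idea. When $s$ or $j$ is one of the extreme even indices $2$ or $n-1$, the interval of $\vert y_s\vert$ or $\vert y_j\vert$ is one of the shortened intervals $[2,5]$ or $[n-4,n-1]$, and when $j=n-1$ the right neighbour $b+1=n$ of its support is a genuine vertex of the Dynkin diagram; so in these cases one can pick up two $-1$'s and two $+1$'s, and one has to write out $\sum_p(\alpha_p,\vert y_j\vert)$ explicitly and check that the $\pm1$'s cancel in pairs. The small instances $n=5$ and $n=7$, where the generic range of even indices is essentially empty, have to be inspected directly as well. All of these checks are routine applications of the same identity $(\alpha_c+\cdots+\alpha_d,\alpha_k)=0$ for $c<k<d$, but they are where the care is required; once the orthogonality holds for every same-parity pair, the reductions of the first paragraph finish the proof.
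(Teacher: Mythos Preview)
Your argument is correct and is essentially the same as the paper's, just more explicit. The paper's entire proof is the single observation $\tfrac12(s_{i,j},s_{i,j})=(j-i+1)+(e_{i,j},o_{i,j})$; unpacking $\tfrac12(e_{i,j}+o_{i,j},e_{i,j}+o_{i,j})$ shows this identity is equivalent to $\tfrac12(e_{i,j},e_{i,j})+\tfrac12(o_{i,j},o_{i,j})=j-i+1$, which---since each $(\lvert y_s\rvert,\lvert y_s\rvert)=2$---is exactly the same-parity orthogonality $(\lvert y_s\rvert,\lvert y_j\rvert)=0$ that you isolate and prove. Your polarization identity $N(\mu+\nu)=N(\mu)+N(\nu)+(\mu,\nu)$ and the paper's displayed observation are two ways of reading off the same cancellation; the boundary bookkeeping you flag is precisely what the paper sweeps into the word ``easily''.
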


\begin{proof}
The proposition follows easily from the observation $\frac{1}{2}(s_{i,j},s_{i,j})=\frac{1}{2}(e_{i,j}+o_{i,j},e_{i,j}+o_{i,j})=(j-i+1)+(e_{i,j},o_{i,j})$.
\end{proof}

\begin{lemma}
\label{Lemma2}
Let $i,j$ be integers such that $1 \leq i \leq j \leq n$  and $j \geq i+2$. The following equation holds:
\begin{align*}
N(s_{i,j})-N(s_{i,j-3})-&N(\vert p_{j-2}\vert)-N(\vert z_{j+1}\vert)=\\
&\begin{cases}(\vert y_j\vert + \vert y_{j-2}\vert,o_{i,j-3})+(\vert y_{j-1}\vert,e_{i,j-4}), & {\rm if \ } j {\rm \ is \ even};\\
(\vert y_j\vert + \vert y_{j-2}\vert,e_{i,j-3})+(\vert y_{j-1}\vert,o_{i,j-4}), & {\rm if \ } j {\rm \ is \ odd}.
\end{cases}
\end{align*}
\end{lemma}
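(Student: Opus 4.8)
The plan is to exploit the fact that $N$ is a \emph{quadratic} function on the root lattice, in order to eliminate all the $N$'s and reduce the identity to two bilinear pairings of positive roots that can then be read off from $\Delta_w^+$. The one structural input is the \emph{polarization identity} for $N$: since the height map $\gamma\mapsto\sum_i d_i$ (for $\gamma=\sum_i d_i\alpha_i$, as in Definition \ref{Norm}) is additive and $(\cdot,\cdot)$ is bilinear, for any roots $\gamma_1,\ldots,\gamma_r$ one has
\begin{align*}
N(\gamma_1+\cdots+\gamma_r)=\sum_{k=1}^{r}N(\gamma_k)+\sum_{1\le k<l\le r}(\gamma_k,\gamma_l).
\end{align*}
The second ingredient is the degree identity $\vert z_{j+1}\vert+\vert p_{j-2}\vert=\vert y_{j-2}\vert+\vert y_{j-1}\vert+\vert y_j\vert=s_{j-2,j}$, valid for $i+2\le j\le n$ with the convention $\vert z_{n+1}\vert=0$; it is forced by the recursion of Proposition \ref{RecursionForClusterVar} together with the fact that all variables involved are homogeneous for the root-lattice grading of $U(\mathfrak{n})_{gr}^*$, so that the two summands $Y_j\Delta_{i,j-1}$ and $Z_{j+1}P_{j-2}\Delta_{i,j-3}$ have equal degree; alternatively it is checked directly from the interval descriptions of $\vert y_s\vert$, $\vert z_s\vert$ coming off $\Delta_w^+$, where (realizing each positive root $\alpha_a+\cdots+\alpha_b$ as $\epsilon_a-\epsilon_{b+1}$ with $\epsilon_i$ orthonormal) it telescopes.

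Granting these, the argument runs as follows. Since $[i,j-3]$ and $[j-2,j]$ partition $[i,j]$, we have $s_{i,j}=s_{i,j-3}+s_{j-2,j}$, and the degree identity gives $s_{j-2,j}=\vert p_{j-2}\vert+\vert z_{j+1}\vert$. Applying the polarization identity to $N(s_{i,j})$ and to $N(\vert p_{j-2}\vert+\vert z_{j+1}\vert)$ and subtracting yields
\begin{align*}
N(s_{i,j})-N(s_{i,j-3})-N(\vert p_{j-2}\vert)-N(\vert z_{j+1}\vert)=(s_{i,j-3},s_{j-2,j})+(\vert p_{j-2}\vert,\vert z_{j+1}\vert),
\end{align*}
so the whole statement is reduced to evaluating the right-hand side.

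Next I would expand $s_{i,j-3}=o_{i,j-3}+e_{i,j-3}$ and $s_{j-2,j}=\vert y_{j-2}\vert+\vert y_{j-1}\vert+\vert y_j\vert$ and sort the resulting pairings by parity type (recall $\vert y_s\vert=\alpha_s$ for $s$ odd, whereas for $s$ even the $\vert y_s\vert$ and $\vert z_s\vert$ are longer interval roots). The same-parity pairings all vanish: $(\alpha_s,\alpha_t)=0$ for odd $s\ne t$, and $(\vert y_s\vert,\vert y_t\vert)=0$ whenever $s\ne t$ have the same parity, because the corresponding roots, written as $\epsilon_a-\epsilon_{b+1}$, never share an endpoint; moreover $(\vert p_{j-2}\vert,\vert z_{j+1}\vert)=0$ by a direct computation. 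Using in addition that $e_{i,j-3}=e_{i,j-4}$ when $j$ is even and $o_{i,j-3}=o_{i,j-4}$ when $j$ is odd, the surviving mixed-parity pairings are exactly $(\vert y_j\vert+\vert y_{j-2}\vert,o_{i,j-3})+(\vert y_{j-1}\vert,e_{i,j-4})$ for $j$ even and the symmetric expression for $j$ odd, which is the asserted formula.

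The bulk of the work, and the only place subtlety enters, is the boundary bookkeeping behind the three vanishing assertions. The interval formulas for $\vert y_s\vert$, $\vert z_s\vert$ degenerate at the exceptional indices $1,2,n-1,n$ (where, for instance, $\vert z_1\vert=\alpha_2+\alpha_3$, $\vert z_n\vert=\alpha_{n-2}+\alpha_{n-1}$, $\vert y_2\vert=\alpha_2+\cdots+\alpha_5$, $\vert y_{n-1}\vert=\alpha_{n-4}+\cdots+\alpha_{n-1}$ deviate from the generic pattern); when $j=n$ one has $\vert z_{n+1}\vert=0$; and when $j-i\le 3$ the sum $o_{i,j-3}$ or $e_{i,j-4}$ is empty or truncated (recall $\Delta_{i,i-1}^v=1$, i.e.\ $s_{i,i-1}=0$). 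In each of these finitely many configurations I would recompute $(s_{i,j-3},s_{j-2,j})$ and $(\vert p_{j-2}\vert,\vert z_{j+1}\vert)$ from the exact root data of $\Delta_w^+$ and Figures \ref{fig:AR}, \ref{fig:AReven} and verify the identity directly; the explicit elements in Remark \ref{ExamplesOfDualCan} and the small cases $n=3,5$ serve as consistency checks. The same argument, with the modified root data for $v'_{n-1}$, $w'_{n-2}$, $x'_{n-1}$, handles the companion statement for $U_v^+(w')$.
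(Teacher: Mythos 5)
Your proposal is correct and follows essentially the same route as the paper's proof: both rest on the degree identity $\vert y_{j-1}\vert+\vert y_j\vert=\vert z_{j-2}\vert+\vert z_{j+1}\vert$ together with expanding the quadratic part of $N$ and using that same-parity degrees pair to zero (the paper packages this as $\tfrac12(s_{i,j},s_{i,j})=(j-i+1)+(e_{i,j},o_{i,j})$, you as a polarization identity, which is the same computation). Your write-up is more explicit about the boundary cases, but no new idea is involved.
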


\begin{proof}
If $j$ is odd, then the elements $y_j,y_{j-1},z_{j+1},z_{j-2}$ have degrees $\beta_j,\beta_{n+j_1},\beta_{j+1}$ and $\beta_{n+j-2}$, respectively. If $j$ is odd, then the elements $y_j,y_{j-1},z_{j+1},z_{j-2}$ have degrees $\beta_{n+j},\beta_{j_1},\beta_{n+j+1}$ and $\beta_{j-2}$, respectively. It is easy to see in Figure \ref{fig:AR} that in all cases the equation $\vert y_j\vert+\vert y_{j-1}\vert=\vert z_{j+1}\vert+\vert z_{j-2}\vert$ holds. With this fact the proposition follows just as above from the observation $\frac{1}{2}(s_{i,j},s_{i,j})=\frac{1}{2}(e_{i,j}+o_{i,j},e_{i,j}+o_{i,j})=(j-i+1)+(e_{i,j},o_{i,j})$.
\end{proof}

\begin{remark}
\label{PBWExpansionOfDelta}
By definition, the leading term of the variable $\Delta^v_{i,j}$ (where $1 \leq i \leq j \leq n$) is $\prod_{i \leq r \leq j,r \ {\rm odd}}y_r\prod_{i \leq r \leq j, \ {r \rm even}}y_r$. Therefore, $\Delta^v_{i,j}$ is a $\mathbb{Z}[v,v^{-1}]$-linear combination of terms of the form 
\begin{align*}
\prod_{\genfrac{}{}{0pt}{}{i \leq r \leq j}{r \ {\rm odd}}}y_r^{1-a_r}\prod_{\genfrac{}{}{0pt}{}{i-1 \leq r \leq j+1}{r \ {\rm even}}}z_r^{a_{r+1}+a_{r-1}-a_r}\prod_{\genfrac{}{}{0pt}{}{i-1 \leq r \leq j+1}{r \ {\rm odd}}}z_r^{a_{r+1}+a_{r-1}-a_r} \prod_{\genfrac{}{}{0pt}{}{i \leq r \leq j}{r \ {\rm even}}}y_r^{1-a_r}
\end{align*}
for some $\textbf{a}=(a_1,a_2,\ldots,a_n)\in \left\lbrace0,1\right\rbrace^n$ such that $a_1=a_2=\ldots=a_{i-1}=0$, $a_{j+1}=a_{j+1}=\ldots=a_n=0$, and $a_{r+1}+a_{r-1}-a_r\geq0$ for all $r$. We denote this term by $\Delta^v_{i,j}[\textbf{a}]$. 
\end{remark}

We know that $p_{j+1}$ commutes, up to a power of $v$, with every $\Delta^v_{i,j}[\textbf{a}]$. The following proposition shows that much more is true: The commutation exponent only depends on the pair $(i,j)$, but not on $\textbf{a}\in\left\lbrace0,1\right\rbrace^n$.

\begin{proposition}
Let $i,j$ be integers such that $1\leq i \leq j \leq n-1$. The variables $\Delta^v_{i,j}[\textbf{a}]$ and $p_{j+1}$ are $v$-commutative. More precisely: If $j$ is even, then $$\Delta^v_{i,j}[\textbf{a}]p_{j+1}=v^{(\vert y_{j+1}\vert,e_{i,j})+(\vert z_{j+1}\vert,e_{i,j})-(\vert z_{j+1}\vert,o_{i,j-1})}p_{j+1}\Delta^v_{i,j}[\textbf{a}],$$ and if $j$ is odd, then $$\Delta^v_{i,j}[\textbf{a}]p_{j+1}=v^{-(\vert y_{j+1}\vert,o_{i,j})+(\vert z_{j+1}\vert,e_{i,j})-(\vert z_{j+1}\vert,o_{i,j-1})}p_{j+1}\Delta^v_{i,j}[\textbf{a}].$$ 
\end{proposition}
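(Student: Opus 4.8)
Fix $i,j$ with $1\le i\le j\le n-1$. By Remark~\ref{PBWExpansionOfDelta} the element $\Delta^v_{i,j}[\textbf{a}]$ is an ordered monomial in the dual Poincar\'{e}--Birkhoff--Witt generators $y_r$ (for $i\le r\le j$) and $z_r$ (for $i-1\le r\le j+1$); since $j\le n-1$ every index occurring is $\le n$, so each factor is one of the generators handled by Lemma~\ref{PCommutation}. That lemma --- and the explicit relations established in its proof --- show that $p_{j+1}$ $v$-commutes with each such factor with no correction terms. Hence $p_{j+1}$ $v$-commutes with the monomial $\Delta^v_{i,j}[\textbf{a}]$, and the commutation exponent is the sum, counted with multiplicity, of the individual exponents. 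The proof then has two parts: (i) show that this sum does not depend on $\textbf{a}$; (ii) evaluate it.

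\textbf{Step (i): independence of $\textbf{a}$.} Write $c(y_r)$, $c(z_r)$ for the integers with $y_rp_{j+1}=v^{c(y_r)}p_{j+1}y_r$ and $z_rp_{j+1}=v^{c(z_r)}p_{j+1}z_r$, as read off from the proof of Lemma~\ref{PCommutation}. Using the exponents of the $z_r$ in $\Delta^v_{i,j}[\textbf{a}]$ from Remark~\ref{PBWExpansionOfDelta}, the exponent attached to $\Delta^v_{i,j}[\textbf{a}]$ equals
\[
\sum_{i\le r\le j}(1-a_r)\,c(y_r)\;+\;\sum_{i-1\le r\le j+1}(a_{r+1}+a_{r-1}-a_r)\,c(z_r).
\]
Since $\textbf{a}$ is supported on $\{i,i+1,\ldots,j\}$, the coefficient of $a_r$ in this expression is $c(z_{r-1})+c(z_{r+1})-c(z_r)-c(y_r)$. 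I will check that this vanishes for every relevant $r$: it is exactly the statement that moving $p_{j+1}$ past $z_{r-1}z_{r+1}$ costs the same power of $v$ as moving it past $y_rz_r$ (equivalently $z_ry_r$), which holds because these two products occur in the defining identity $p_r=y_rz_r-v^{-1}z_{r-1}z_{r+1}$ --- so in particular $\vert y_r\vert+\vert z_r\vert=\vert z_{r-1}\vert+\vert z_{r+1}\vert=\vert p_r\vert$ (cf.\ the proof of Lemma~\ref{Lemma2}) --- and because the commutation exponents with $p_{j+1}$ are governed by the symmetric bilinear form together with the Poincar\'{e}--Birkhoff--Witt order (Remark~\ref{HighestExponent}), hence additive in the degrees. (Conceptually, the vanishing says that $p_r$ and $p_{j+1}$ are $v$-commutative.) Therefore the exponent equals its value at $\textbf{a}=0$, namely $\sum_{i\le r\le j}c(y_r)$, the exponent for commuting $p_{j+1}$ past the leading term $\prod_{i\le r\le j,\ r\ {\rm odd}}y_r\prod_{i\le r\le j,\ r\ {\rm even}}y_r$ of $\Delta^v_{i,j}$.

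\textbf{Step (ii): evaluation.} It remains to compute $\sum_{i\le r\le j}c(y_r)$. Writing $p_{j+1}=y_{j+1}z_{j+1}-v^{-1}z_jz_{j+2}$ and using Step (i) (with $r=j+1$) to see that $y_r$ contributes equally across the two monomials of $p_{j+1}$, the exponent $c(y_r)$ splits into the exponent of $y_r$ past $y_{j+1}$ plus that of $y_r$ past $z_{j+1}$; each of these is read off from the straightening relations of Lemma~\ref{StraighteningRelations} and equals $\pm(\vert y_r\vert,\vert y_{j+1}\vert)$, resp.\ $\pm(\vert y_r\vert,\vert z_{j+1}\vert)$, with sign fixed by the position of $\beta_r$ relative to $\beta_{j+1}$ and $\beta_{n+j+1}$ in the Poincar\'{e}--Birkhoff--Witt order (Remark~\ref{HighestExponent}), while $\vert y_r\vert$, $\vert z_r\vert$ are the positive roots listed in the description of $\Delta_w^+$ / Figure~\ref{fig:AR}. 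Summing over $i\le r\le j$, separating $\sum_{i\le r\le j}\vert y_r\vert=o_{i,j}+e_{i,j}$ and using $o_{i,j-1}=o_{i,j}$ when $j$ is even and $e_{i,j-1}=e_{i,j}$ when $j$ is odd, the sum collapses to $(\vert y_{j+1}\vert,e_{i,j})+(\vert z_{j+1}\vert,e_{i,j})-(\vert z_{j+1}\vert,o_{i,j-1})$ in the even case and to $-(\vert y_{j+1}\vert,o_{i,j})+(\vert z_{j+1}\vert,e_{i,j})-(\vert z_{j+1}\vert,o_{i,j-1})$ in the odd case, as claimed.

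\textbf{The main obstacle.} The conceptual content (Step (i)) is short; the bulk of the work --- and where the care is needed --- is the bookkeeping of Step (ii): deciding, for each generator $y_r$ with $i\le r\le j$, whether $\vert y_r\vert$ pairs nontrivially with $\vert y_{j+1}\vert$ and with $\vert z_{j+1}\vert$ and with which sign, together with the separate treatment of the boundary cases where $i$ or $j$ is close to $1$ or $n$ (so that some of $\vert y_r\vert$, $\vert z_r\vert$, $\vert p_{j+1}\vert$ are not of the generic form in the list for $\Delta_w^+$) and the small cases $j-i\le 3$. One must also confirm that the individual $v$-commutations with $p_{j+1}$ remain clean in all these boundary cases; this is implicit in, but should be checked against, the proof of Lemma~\ref{PCommutation}.
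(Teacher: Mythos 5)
Your proposal is correct and follows essentially the same route as the paper: the paper likewise invokes Lemma \ref{PCommutation} for the qualitative $v$-commutativity, compares leading terms via Remark \ref{HighestExponent}, and shows that the $\textbf{a}$-dependent corrections to the exponent vanish using exactly the identities you cite ($\vert y_r\vert+\vert z_r\vert=\vert z_{r-1}\vert+\vert z_{r+1}\vert$ together with the vanishing of pairings between distant roots). The only organizational difference is that the paper writes out the corrections only for $a_j$, $a_{j-1}$, $a_{j-2}$ (the remaining ones vanishing trivially) and checks the relevant signed pairings explicitly, which is precisely the sign bookkeeping your appeal to ``additivity in the degrees'' compresses; as you note yourself, that bookkeeping is where the care is needed.
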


\begin{proof}
Assume that $j$ be even. By Lemma \ref{PCommutation} we know that $p_{j+1}$ commutes, up to a power of $v$, with every $\Delta^v_{i,j}[\textbf{a}]$. To determine the appropriate power of $v$, we compare the leading terms. The leading term of $p_{j+1}$ is $y_{j+1}z_{j+1}$. By Remark \ref{HighestExponent} we see that
\begin{align*}
\Delta^v_{i,j}[\textbf{a}]p_{j+1}=&v^{a_j(\vert y_{j+1}\vert,\vert z_{j+1}\vert+\vert z_{j-1}\vert-\vert y_j\vert-\vert-z_j\vert)}\cdot v^{a_{j-1}(\vert y_{j+1}\vert,\vert z_j\vert+\vert z_{j-2}\vert-\vert z_{j-1}\vert)} \\
&\cdot v^{a_{j-2}(\vert y_{j+1}\vert,\vert z_{j-1}\vert-\vert z_{j-2}\vert)}\cdot v^{a_j(\vert z_{j+1}\vert,\vert y_j\vert-\vert z_j\vert)}\cdot v^{a_{j-1}(\vert z_{j+1}\vert,\vert y_{j-1}\vert-\vert z_j\vert+\vert z_{j-2}\vert)} \\
&\cdot v^{(\vert y_{j+1}\vert,e_{i,j})+(\vert z_{j+1}\vert,e_{i,j})-(\vert z_{j+1}\vert,o_{i,j-1})}p_{j+1} \Delta^v_{i,j}[\textbf{a}].
\end{align*}
Notice that first of all $(\vert y_k\vert,\vert y_{k-2}\vert)=(\vert y_k\vert,\vert y_{k-4}\vert)=0$ for all $k$, that secondly $(\vert y_k\vert,\vert y_l\vert)=0$ for $\vert k-l\vert \geq 4$, that thirdly $(\vert y_k\vert,\vert z_l\vert)=0$ for $\vert k-l\vert \geq 3$, and that finally $y_j\vert+\vert z_j\vert=\vert z_{j+1}\vert+\vert z_{j-1}\vert$ for all $k$. Furthermore, note that $(\vert z_{k+1}\vert,\vert y_k\vert)=(\vert z_{k+1},\vert z_k\vert)$ for all $k$. Hence, we have $$\Delta^v_{i,j}[\textbf{a}]p_{j+1}=v^{(\vert y_{j+1}\vert,e_{i,j})+(\vert z_{j+1}\vert,e_{i,j})-(\vert z_{j+1}\vert,o_{i,j-1})}p_{j+1}\Delta^v_{i,j}[\textbf{a}].$$ 

By a similar argument we can show that for odd numbers $j$ the equation $$\Delta^v_{i,j}[\textbf{a}]p_{j+1}=v^{-(\vert y_{j+1}\vert,o_{i,j})+(\vert z_{j+1}\vert,e_{i,j})-(\vert z_{j+1}\vert,o_{i,j-1})}p_{j+1}\Delta^v_{i,j}[\textbf{a}].$$
\end{proof}

The independence of the commutation exponent on $\textbf{a}\in\left\lbrace0,1\right\rbrace^n$ implies the corollary.

\begin{corollary}
Let $i,j$ be integers as above. Then the variables $\Delta^v_{i,j}$ and $p_{j+1}$ are $v$-commutative. More precisely: If $j$ is even, then $$\Delta^v_{i,j}p_{j+1}=v^{(\vert y_{j+1}\vert,e_{i,j})+(\vert z_{j+1}\vert,e_{i,j})-(\vert z_{j+1}\vert,o_{i,j-1})}p_{j+1}\Delta^v_{i,j},$$ and if $j$ is odd, then $$\Delta^v_{i,j}p_{j+1}=v^{-(\vert y_{j+1}\vert,o_{i,j})+(\vert z_{j+1}\vert,e_{i,j})-(\vert z_{j+1}\vert,o_{i,j-1})}p_{j+1}\Delta^v_{i,j}.$$ 
\end{corollary}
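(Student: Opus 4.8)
The plan is to deduce the corollary directly from the preceding proposition by expanding $\Delta^v_{i,j}$ in its Poincar\'e--Birkhoff--Witt form. By Remark \ref{PBWExpansionOfDelta} we may write
\begin{align*}
\Delta^v_{i,j}=\sum_{\textbf{a}}c_{\textbf{a}}\,\Delta^v_{i,j}[\textbf{a}]
\end{align*}
for suitable scalars $c_{\textbf{a}}\in\mathbb{Z}[v,v^{-1}]$, where the sum runs over those $\textbf{a}\in\{0,1\}^n$ admissible in the sense of that remark. First I would invoke the proposition just proved, which asserts that each individual term $\Delta^v_{i,j}[\textbf{a}]$ satisfies $\Delta^v_{i,j}[\textbf{a}]\,p_{j+1}=v^{N}p_{j+1}\,\Delta^v_{i,j}[\textbf{a}]$ with the \emph{same} exponent $N$ for every admissible $\textbf{a}$ — namely $N=(\vert y_{j+1}\vert,e_{i,j})+(\vert z_{j+1}\vert,e_{i,j})-(\vert z_{j+1}\vert,o_{i,j-1})$ when $j$ is even and $N=-(\vert y_{j+1}\vert,o_{i,j})+(\vert z_{j+1}\vert,e_{i,j})-(\vert z_{j+1}\vert,o_{i,j-1})$ when $j$ is odd. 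This uniformity is precisely what the proposition buys us.

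The key step is then purely formal: since conjugation of $p_{j+1}$ acts on each summand by the same scalar $v^{N}$, it acts on the whole linear combination by that scalar. Explicitly,
\begin{align*}
\Delta^v_{i,j}\,p_{j+1}=\sum_{\textbf{a}}c_{\textbf{a}}\,\Delta^v_{i,j}[\textbf{a}]\,p_{j+1}=\sum_{\textbf{a}}c_{\textbf{a}}\,v^{N}p_{j+1}\,\Delta^v_{i,j}[\textbf{a}]=v^{N}p_{j+1}\sum_{\textbf{a}}c_{\textbf{a}}\,\Delta^v_{i,j}[\textbf{a}]=v^{N}p_{j+1}\,\Delta^v_{i,j},
\end{align*}
which is exactly the claimed relation in both parity cases. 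One should note that the scalars $c_{\textbf{a}}$ lie in the centre $\mathbb{Z}[v,v^{-1}]$ of $U_v^+(w)$, so they may be pulled past $p_{j+1}$ without any sign or power-of-$v$ correction; this is the only point where one uses that $U_v^+(w)$ is a $\mathbb{Z}[v,v^{-1}]$-algebra rather than merely a ring.

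I do not expect a genuine obstacle here: the corollary is a one-line consequence of the proposition, and the remark preceding it already flags that ``the independence of the commutation exponent on $\textbf{a}$ implies the corollary.'' The only thing to be careful about is to make sure the PBW expansion of $\Delta^v_{i,j}$ really is supported on the terms $\Delta^v_{i,j}[\textbf{a}]$ treated in the proposition — but this is guaranteed by Remark \ref{PBWExpansionOfDelta}, which describes exactly that support in terms of admissible $\textbf{a}\in\{0,1\}^n$. Hence the proof amounts to citing the proposition, citing Remark \ref{PBWExpansionOfDelta}, and performing the one-line linearity argument above, separately recording the even and odd values of $N$.
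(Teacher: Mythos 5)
Your proposal is correct and is exactly the argument the paper intends: the sentence preceding the corollary ("The independence of the commutation exponent on $\textbf{a}\in\{0,1\}^n$ implies the corollary") is precisely your linearity argument, using Remark \ref{PBWExpansionOfDelta} to write $\Delta^v_{i,j}$ as a $\mathbb{Z}[v,v^{-1}]$-linear combination of the terms $\Delta^v_{i,j}[\textbf{a}]$ and the proposition to commute $p_{j+1}$ past each summand with the same power of $v$. Nothing further is needed.
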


Having established these conventions, definitions and proposistions, we are now able to formulate a theorem that provides a recursion for the $\Delta_{i,j}^v$ and a quantized version of the exchange relation. These are the parts (a) and (b) of Theorem \ref{QuantRecursionForClusterVar}. For a proof of the theorem, we proceed by induction. For a functioning induction step we include also the $v$-commutator relations in part (c) and (d).

\begin{theorem}
\label{QuantRecursionForClusterVar}
Let $i,j$ be integers such that $1 \leq i,j \leq n$ and $j-i \geq 2$.
\begin{enumerate}
\item[(a)] The dual canonical basis element $\Delta_{i,j}^v$ can be computed recursively from elements $\Delta_{i,j'}^v$ with $j'<j$. More precisely, if $j$ is even, then we have: 
\begin{align*}
\Delta^v_{i,j}&=\Delta^v_{i,j-1}y_j-v^{A_{i,j}}\Delta^v_{i,j-3}p_{j-2}z_{j+1}\\
&=v^{-(\vert y_j\vert,o_{i,j-1})}y_j\Delta^v_{i,j-1}\\&\quad-v^{-A_{i,j}-(\vert y_j\vert +\vert y_{j-2}\vert,o_{i,j-3})-(\vert y_{j-1}\vert,e_{i,j-4})}z_{j+1}p_{j-2}\Delta^v_{i,j-3}.
\end{align*}
If $j$ is odd, the we have:
\begin{align*}
\Delta^v_{i,j}&=y_j\Delta^v_{i,j-1}-v^{A_{i,j}}z_{j+1}p_{j-2}\Delta^v_{i,j-3}\\
&=v^{-(\vert y_j\vert,e_{i,j-1})}\Delta^v_{i,j-1}y_j\\&\quad-v^{-A_{i,j}-(\vert y_j\vert + \vert y_{j-2}\vert,e_{i,j-3})-(\vert y_{j-1}\vert,o_{i,j-4})}\Delta^v_{i,j-3}p_{j-2}z_{j+1}.
\end{align*}

\item[(b)] Furthermore, the following quantum cluster exchange relation holds:
\begin{align*}
\Delta^v_{i,j}z_j=\begin{cases}
\Delta^v_{i,j-1}p_j+v^{1-(\vert y_{j-1}\vert,e_{i,j-2})}\Delta^v_{i,j-2}p_{j-1}z_{j+1}, & {\rm if \ } j {\rm \ is \ even};\\
v^{-(\vert y_j\vert,e_{i,j-1})}\Delta^v_{i,j-1}p_j+v^{-1-(\vert y_j\vert,e_{i,j-1})}\Delta^v_{i,j-2}p_{j-1}z_{j+1}, & {\rm if \ } j {\rm \ is \ odd}.
\end{cases}
\end{align*}

\item[(c)] If $j+1 \leq n$, then the following $v$-commutator relation holds. If $j$ is even, then 
\begin{align*}
y_{j+1}\Delta^v_{i,j}=v^{-(\vert y_{j+1}\vert,e_{i,j})}\Delta_{i,j}y_{j+1}+v^{-1-(\vert y_{j-1}\vert,e_{i,j-2})}(v^{-1}-v)\Delta^v_{i,j-2}p_{j-1}z_{j+2}.
\end{align*}
If $j$ is odd, then
\begin{align*}
y_{j+1}\Delta^v_{i,j}=v\Delta^v_{i,j}y_{j+1}+v^{1-(\vert y_j\vert,e_{i,j-3})}(v-v^{-1})\Delta_{i,j-2}^vp_{j-1}z_{j+2}.
\end{align*}

\item[(d)] If $j+2 \leq n$, then $\Delta_{i,j}^v$ and $y_{j+2}$ are $v$-commutative. More precisely, if $j$ is even, then $\Delta_{i,j}^vy_{j+2}=v^{-1}y_{j+2}\Delta_{i,j}^v$, and if $j$ is odd, then $\Delta_{i,j}^vy_{j+2}=vy_{j+2}\Delta_{i,j}^v$.
\end{enumerate}

\end{theorem}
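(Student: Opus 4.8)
The four assertions (a)--(d) are intertwined, so the plan is to prove them simultaneously by induction on $j-i$, for each fixed $i$. The base cases are $j-i=2$ and $j-i=3$, which can be checked directly from the explicit formulas collected in Remark \ref{ExamplesOfDualCan} (these give $\Delta^v_{i,i+1}$ and $\Delta^v_{i,i+2}$ in the dual PBW basis) together with the straightening relations for the dual variables and the definition $p_i=y_iz_i-v^{-1}z_{i-1}z_{i+1}$ (resp. $z_iy_i-v^{-1}z_{i-1}z_{i+1}$). In the base cases one has $A_{i,j}=-1$, and the commutator relations (c),(d) and the exchange relation (b) become short computations with the explicit expressions; the verification of property (2) of Theorem \ref{ExistenceOfDualCan} for the right-hand side of (a) is where the parity-dependent $\sigma$-eigenvalue bookkeeping happens.

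For the inductive step, the key idea is to \emph{define} a candidate element by the right-hand side of the recursion in (a), namely $\Delta^v_{i,j-1}y_j-v^{A_{i,j}}\Delta^v_{i,j-3}p_{j-2}z_{j+1}$ for even $j$ (and the analogous expression for odd $j$), and then to show that this candidate satisfies the two defining properties of the dual canonical basis element with the prescribed leading term, so that by the uniqueness in Theorem \ref{ExistenceOfDualCan} it equals $\Delta^v_{i,j}$. Property (1) --- that the leading PBW term is as required and all other terms lie in $v^{-1}\mathbb{Z}[v^{-1}]$ times strictly larger dual PBW basis elements --- follows by expanding $\Delta^v_{i,j-1}y_j$ using the straightening relations: by the induction hypothesis $\Delta^v_{i,j-1}$ is a $\mathbb{Z}[v,v^{-1}]$-combination of the terms $\Delta^v_{i,j-1}[\textbf{a}]$ from Remark \ref{PBWExpansionOfDelta}, and multiplying each by $y_j$ and straightening produces, besides the desired leading term, exactly correction terms that are absorbed by the $-v^{A_{i,j}}\Delta^v_{i,j-3}p_{j-2}z_{j+1}$ summand (this is the algebraic shadow of the Laplace expansion in Proposition \ref{RecursionForClusterVar}, using the determinantal identity (\ref{InitialExchange}) in its quantized form $p_i=\cdots$). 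Property (2), the $\sigma$-invariance $\sigma(\Delta^v_{i,j})=v^{N(\textbf{a})}\Delta^v_{i,j}$, is checked by applying $\sigma$ to the candidate, using that $\sigma$ is an antihomomorphism, that $\sigma(E^*(\beta_k))=v^{N(\beta_k)}E^*(\beta_k)$, and the $v$-commutation exponents from Remark \ref{HighestExponent}, Lemma \ref{PCommutation}, and the Corollary preceding the theorem; here Lemmas \ref{FirstLemma} and \ref{Lemma2} are exactly the identities needed to match the accumulated powers of $v$ with $N(s_{i,j})$. The second (alternative) form of the recursion in (a), with the factors reversed, then follows by applying $\sigma$ to the first form and simplifying, or equivalently by moving $y_j$ and $z_{j+1}p_{j-2}$ past $\Delta^v_{i,j-1}$ resp. $\Delta^v_{i,j-3}$ using (c), (d) and the Corollary.

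Once (a) is established for $j$, parts (b), (c), (d) for $j$ follow formally: (b) is obtained from (a) by multiplying on the right by $z_j$ and using the straightening relation $y_jz_j=z_jy_j+(v-v^{-1})z_{j-1}z_{j+1}$ (even $j$) or $z_jy_j=y_jz_j+(v-v^{-1})z_{j-1}z_{j+1}$ (odd $j$) together with the quantum determinantal identity for $p_j$ --- this is the quantized analogue of the passage from Proposition \ref{RecursionForClusterVar} to Lemma \ref{ExchangeRelation}, and I would mirror that inductive computation verbatim, tracking the extra powers of $v$ via the $v$-commutators. Relations (c) and (d) follow by multiplying the recursion (a) on the left by $y_{j+1}$ resp. $y_{j+2}$ and pushing $y_{j+1}$, $y_{j+2}$ to the right through $\Delta^v_{i,j-1}y_j$ and $\Delta^v_{i,j-3}p_{j-2}z_{j+1}$, using the induction hypothesis (c),(d) for $\Delta^v_{i,j-1},\Delta^v_{i,j-3}$, the straightening relations for $y_{j+1}$ with $y_j$ (which contribute the $(v-v^{-1})z_{j-1}z_{j+2}$-type correction responsible for the inhomogeneous term in (c)), and Lemma \ref{PCommutation}; the fact that in (d) no correction term survives reflects that $y_{j+2}$ commutes up to a scalar with every generator appearing in $\Delta^v_{i,j}$.

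\textbf{Main obstacle.} The substantive difficulty is entirely in the \emph{exponent bookkeeping} for property (2) and for the precise $v$-powers in (b)--(d): one must verify that the powers of $v$ produced by repeatedly applying the straightening relations of Lemma \ref{StraighteningRelations} and the commutation exponents assemble into exactly the closed forms $-(\vert y_j\vert,o_{i,j-1})$, $A_{i,j}$, $N(s_{i,j})$, etc. This is where the case distinction between small and large intervals ($A_{i,j}=-1$ versus $-2$) genuinely enters --- it comes from whether $(\vert y_j\vert,\vert y_{j-2}\vert)$ and similar inner products vanish --- and where Lemmas \ref{FirstLemma} and \ref{Lemma2}, the identity $\vert y_j\vert+\vert y_{j-1}\vert=\vert z_{j+1}\vert+\vert z_{j-2}\vert$, and the parity of $j$ all have to be combined carefully. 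I expect the even/odd split to roughly double the length of the computation without adding conceptual content; the cleanest organization is to do the even-$j$ case in full and remark that the odd-$j$ case is obtained by the symmetry exchanging the roles of the two ``colors'' of vertices, consistent with how the straightening relations and the definition of $p_i$ were stated.
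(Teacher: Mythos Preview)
Your proposal is correct and matches the paper's approach closely: simultaneous induction on $j-i$ with base case $j-i=2$ verified from the explicit formulas in Remark~\ref{ExamplesOfDualCan}, and the inductive step proceeding by defining a candidate $D$ via the recursion in (a) and verifying properties (1) and (2) of Theorem~\ref{ExistenceOfDualCan} to conclude $D=\Delta^v_{i,j}$, after which (b), (c), (d) are derived in that order from (a) and the induction hypothesis.

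One correction to your description of the mechanism for property~(1): you write that the correction terms from straightening $\Delta^v_{i,j-1}y_j$ are ``absorbed by'' the $-v^{A_{i,j}}\Delta^v_{i,j-3}p_{j-2}z_{j+1}$ summand. This is not what happens. For even $j$, the product $\Delta^v_{i,j-1}y_j$ is \emph{already} in dual PBW order (since $y_j$ with $j$ even sits to the right of every factor that can appear in the PBW expansion of $\Delta^v_{i,j-1}$), so no straightening is needed there, and its non-leading part is directly in $v^{-1}\mathbb{Z}[v^{-1}]$ by the induction hypothesis on $\Delta^v_{i,j-1}$. The second summand is handled \emph{separately}: one expands $\Delta^v_{i,j-3}$, moves $p_{j-2}$ through using Lemma~\ref{PCommutation}, splits $p_{j-2}=z_{j-2}y_{j-2}-v^{-1}z_{j-3}z_{j-1}$, and checks that every resulting coefficient lies in $v^{-1}\mathbb{Z}[v^{-1}]$; this is precisely where the inequality $A_{i,j}\le -1$ is used. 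No cancellation between the two summands occurs for property~(1). The interaction between the two summands is what makes property~(2) (the $\sigma$-invariance) work, and that is where parts (c) and (d) of the induction hypothesis for $(i,j-1)$ are invoked, together with Lemmas~\ref{FirstLemma} and~\ref{Lemma2}. Apart from this misdescription, your plan --- including the derivation of (b) from (a) plus the inductive (b), the derivation of (c) from (a) plus straightening $y_{j+1}$ past $y_j$ and $z_{j+1}$, and the identification of the $v$-exponent bookkeeping (and the $A_{i,j}=-1$ versus $-2$ dichotomy) as the main labor --- tracks the paper's argument exactly.
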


\begin{proof}
We proceed by induction on $j-i$. Using the explicit formulae provided by Remark \ref{ExamplesOfDualCan} it is easy to see that Theorem \ref{QuantRecursionForClusterVar} is true for $j-i=2$. Now let $j-i \geq 3$ and assume that Theorem \ref{QuantRecursionForClusterVar} is true for all smaller values of $j-i$. We distinguish two cases.

First of all assume that $j$ is even. It follows that $4 \leq j \leq n-1$. Put $D=\Delta^v_{i,j-1}y_j-v^{A_{i,j}}\Delta^v_{i,j-3}p_{j-2}z_{j+1}$. We have to prove that $D=\Delta^v_{i,j}$, i.e., we have to show that $D$ satisfies properties (1) and (2) of Theorem \ref{ExistenceOfDualCan}. First of all, we verify property (1). We expand the dual canonical basis elements $\Delta^v_{i,j-1}, \Delta^v_{i,j-3}$ according to Remark \ref{PBWExpansionOfDelta}. We see that 
\begin{align*}
\Delta^v_{i,j-1}=\sum_{\textbf{a}}f_{\textbf{a}}v^{\sum -\genfrac(){0cm}{1}{a_{r+1}+a_{r-1}-a_r}{2}}&\prod_{\genfrac{}{}{0pt}{}{i \leq r \leq j-3}{r \ {\rm odd}}}y_r^{1-a_r}\prod_{\genfrac{}{}{0pt}{}{i-1 \leq r \leq j}{r \ {\rm even}}}z_r^{a_{r+1}+a_{r-1}-a_r}\\
&\cdot \prod_{\genfrac{}{}{0pt}{}{i-1 \leq r \leq j}{r \ {\rm odd}}}z_r^{a_{r+1}+a_{r-1}-a_r} \prod_{\genfrac{}{}{0pt}{}{i \leq r \leq j-1}{r \ {\rm even}}}y_r^{1-a_r}
\end{align*}
where the sum is taken over all the admissible sequences $\textbf{a}=(a_1,a_2,\ldots,a_n)\in \left\lbrace0,1\right\rbrace^n$ as in Remark \ref{PBWExpansionOfDelta} and $f_{\textbf{a}} \in v^{-1}\mathbb{Z}[v^{-1}]$ except for $f_0=1$. Here, we have used that $-\genfrac(){0cm}{1}{1-a_r}{2}=0$ for all terms $a_r$ in such a sequence. It is clear that $\Delta^v_{i,j-1}y_j-\Delta^v_{i,j-1}[0]y_j\in\bigoplus_{\textbf{b}\in S(\textbf{a})}v^{-1}\mathbb{Z}[v^{-1}]E[\textbf{b}]^*$ and that $\Delta^v_{i,j-1}[0]y_j$ is the dual Poincar\'e-Birkhoff-Witt basis element from Definition \ref{DefDelta} that serves as leading term.

Furthermore, we have
\begin{align*}
\Delta^v_{i,j-3}=\sum_{\textbf{a}}g_{\textbf{a}}v^{\sum -\genfrac(){0cm}{1}{a_{r+1}+a_{r-1}-a_r}{2}}&\prod_{\genfrac{}{}{0pt}{}{i \leq r \leq j-3}{r \ {\rm odd}}}y_r^{1-a_r}\prod_{\genfrac{}{}{0pt}{}{i-1 \leq r \leq j-2}{r \ {\rm even}}}z_r^{a_{r+1}+a_{r-1}-a_r}\\
&\cdot \prod_{\genfrac{}{}{0pt}{}{i-1 \leq r \leq j-3}{r \ {\rm odd}}}z_r^{a_{r+1}+a_{r-1}-a_r} \prod_{\genfrac{}{}{0pt}{}{i \leq r \leq j-4}{r \ {\rm even}}}y_r^{1-a_r}
\end{align*}
where the sum is taken over all the admissible sequences $\textbf{a}=(a_1,a_2,\ldots,a_n)\in \left\lbrace0,1\right\rbrace^n$ as in Remark \ref{PBWExpansionOfDelta} and $g_{\textbf{a}} \in v^{-1}\mathbb{Z}[v^{-1}]$ except for $g_0=1$. Now we consider $v^{A_{i,j}}\Delta^v_{i,j-3}p_{j-2}z_{j+1}$. Note that the generator $z_{j+1}$ commutes with all occuring terms in this expansion. For $p_{j-2}$, by Lemma \ref{PCommutation} the $v$-commutativity relation 
\begin{align*}
\left( \prod_{\genfrac{}{}{0pt}{}{i \leq r \leq j-4}{r \ {\rm even}}}y_r^{1-a_r}\right) p_{j-2}=v^{(1-a_{j-4})(\vert z_{j-2}\vert, \vert y_{j-4}\vert)}p_{j-2}\left( \prod_{\genfrac{}{}{0pt}{}{i \leq r \leq j-4}{r \ {\rm even}}}y_r^{1-a_r}\right) 
\end{align*}
holds. Hence, in each summand we can transfer $p_{j-2}$ to the left of the product. We get a factor $v^{(1-a_{j-4})}$ since $(\vert z_{j-2}\vert, \vert y_{j-4}\vert)=1$ for all $j$. We concentrate on a single summand. Write $p_{j-2}=z_{j-2}y_{j-2}-v^{-1}z_{j-3}z_{j-1}$. This decomposition splits the sum into two parts. Consider the summand coming from $z_{j-2}y_{j-2}$. To write this term in the dual PBW basis we have to tranfer $z_{j-2}$ to the left of the product of the odd $z_r$. We get a factor $v^{a_{j-4}-a_{j-3}}$. Now all the monomials are in the right order. The generators $z_{j+1}$ and $y_{j-2}$ do not occur in the expansion of $\Delta^v_{i,j-3}$ in the dual PBW basis, but $z_{j-2}$ may. In the summands where $z_{j-2}$ occurs, we have increased the exponent from $1$ to $2$. So all coefficients in the dual PBW expansion of these summands have the form $$g_{\textbf{a}}v^{A_{i,j}}v^{1-a_{j-4}}v^{a_{j-4}-a_{j-3}}v^{\genfrac(){0cm}{1}{1+a_{j-3}}{2}}.$$ Note that $A_{i,j}\leq -1$ and that $\genfrac(){0cm}{1}{1+a_{j-3}}{2}-a_{j-3}=0$ for $a_{j-3} \in \left\lbrace 0,1\right\rbrace$. Hence, all coefficients are in $v^{-1}\mathbb{Z}[v^{-1}]$. Now consider the summand coming from $z_{j-3}z_{j-1}$. The monomials are already in the right order. We may have increased the exponent of $z_{j-3}$ from $1$ to $2$. It is easy to see that all coefficients are in $v^{-1}\mathbb{Z}[v^{-1}]$. This shows that $D$ satisfies property (1).

To conclude $D=\Delta^v_{i,j}$, we have to verify property (2) of Theorem \ref{QuantRecursionForClusterVar}. We use parts (c) and (d) of the induction hypothesis for the for the pair $(i,j-1)$ to obtain the following equation. Note that if $j-i \leq 3$, then $(\vert y_{j-1}\vert,\vert e_{i,j-4}\vert)=0$, and if $j-i \geq 4$, then $(\vert y_{j-1}\vert,\vert e_{i,j-4}\vert)=1$, therefore
\begin{align*}
D&=\Delta^v_{i,j-1}y_j-v^{A_{i,j}}\Delta^v_{i,j-3}p_{j-2}z_{j+1}\\
&=v^{-1}y_j\Delta^v_{i,j-1}+v^{-(\vert y_{j-1}\vert,\vert e_{i,j-4}\vert)}(v^{-1}-v)\Delta^v_{i,j-3}p_{j-2}z_{j+1}\\
&\quad\quad -v^{A_{i,j}}\Delta^v_{i,j-3}p_{j-2}z_{j+1}\\
&=v^{-1}y_j\Delta^v_{i,j-1}+v^{1-(\vert y_{j-1}\vert,\vert e_{i,j-4}\vert)}\Delta^v_{i,j-3}p_{j-2}z_{j+1}.
\end{align*}
It is easy to see that $z_{j+1}$ commutes with $p_{j-2}$. Furthermore, it commutes with $\Delta^v_{i,j-3}$ since it commutes with every $E^*(\beta_k)$ in every summand in the dual PBW expansion of $\Delta^v_{i,j-3}$ according to Remark \ref{PBWExpansionOfDelta}. Lemma \ref{PCommutation} implies
\begin{align*}
\Delta^v_{i,j-3}p_{j-2}=v^{-(\vert y_{j-2}\vert,o_{i,j-3})-(\vert z_{j-2}\vert,o_{i,j-3})+(\vert z_{j-2}\vert,e_{i,j-4})}p_{j-2}\Delta^v_{i,j-3}.
\end{align*}
The assumptions $j-i\geq 3$ and $j<n$ imply the equations 
\begin{align*}
&(\vert y_j\vert,o_{i,j-1})=(\vert y_j\vert,o_{i,j-3})=1,\\
&(\vert z_{j-2}\vert,o_{i,j-3})=1.
\end{align*} 
The observation $(\vert z_{j-2}\vert,e_{i,j-4})=0$ for $j-i \leq 3$, and $(\vert z_{j-2}\vert,e_{i,j-4})=1$ for $j-i \geq 4$ yields $(\vert z_{j-2}\vert,e_{i,j-4})=-A_{i,j}-1$. It follows that
\begin{align*}
D&=v^{-(\vert y_j\vert,o_{i,j-1})}y_j\Delta^v_{i,j-1}-v^{-A_{i,j}-(\vert y_j\vert +\vert y_{j-2}\vert,o_{i,j-3})-(\vert y_{j-1}\vert,e_{i,j-4})}z_{j+1}p_{j-2}\Delta^v_{i,j-3}.
\end{align*}
By Lemmas \ref{FirstLemma} and \ref{Lemma2} the last equation is equivalent to property (2). Thus $A=\Delta^v_{i,j}$. Incidentally, we have verified part (a) for the pair $(i,j)$.

Now we prove that the new defined $\Delta^v_{i,j}$ satisfies the quantized cluster recursion (b) of Theorem \ref{QuantRecursionForClusterVar} using the induction hypothesis for part (b):
\begin{align*}
\Delta^v_{i,j}z_j&=\Delta^v_{i,j-1}(p_j+vz_{j-1}z_{j+1})-v^{A_{i,j}}\Delta^v_{i,j-3}p_{j-2}z_{j+1}z_j\\
&=\Delta^v_{i,j-1}p_j+\Big(v\Delta^v_{i,j-1}z_{j-1}-v^{1+A_{i,j}}\Delta^v_{i,j-3}p_{j-2}z_j\Big)z_{j+1}\\
&=\Delta^v_{i,j-1}p_j+\Big(v\Delta^v_{i,j-1}z_{j-1}-v^{-(\vert y_{j-1}\vert,e_{i,j-2})}\Delta^v_{i,j-3}p_{j-2}z_j\Big)z_{j+1}\\
&=\Delta^v_{i,j-1}p_j+v^{1-(\vert y_{j-1}\vert,e_{i,j-2})}\Delta^v_{i,j-2}p_{j-1}z_{j+1}.
\end{align*}

Now we prove that the new defined $\Delta^v_{i,j}$ satisfies property (c) of Theorem \ref{QuantRecursionForClusterVar}. By induction hypothesis we know that part (c) is true for the pair $(i,j-1)$. Note that $(\vert y_{j+1}\vert,\vert z_{j+1}\vert)=(\vert y_j\vert,\vert y_{j+1}\vert)=-\delta_{j,n-1}$. The following calculation verifies part (c) of the theorem:
\begin{align*}
y_{j+1}\Delta^v_{i,j}&=y_{j+1}\Delta^v_{i,j-1}y_j-v^{A_{i,j}}y_{j+1}\Delta^v_{i,j-3}p_{j-2}z_{j+1}\\
&=v^{-1}\Delta^v_{i,j-1}\Big(v^{-(\vert y_j\vert,\vert y_{j+1}\vert)}y_jy_{j+1}+(v^{-1}-v)z_{j+2}z_{j-1}\Big)\\
&\quad-v^{A_{i,j}-1}\Delta^v_{i,j-3}p_{j-2}\Big(v^{-(\vert y_{j+1}\vert,\vert z_{j+1}\vert)}z_{j+1}y_{j+1}+(v^{-1}-v)z_jz_{j+2}\Big)\\
&=v^{-(\vert y_{j+1}\vert,e_{i,j})}\Delta_{i,j}y_{j+1}\\
&\quad+v^{-1}(v^{-1}-v)\Big(\Delta^v_{i,j-1}z_{j-1}-v^{A_{i,j}}\Delta^v_{i,j-3}p_{j-2}z_j\Big)z_{j+2}\\
&=v^{-(\vert y_{j+1}\vert,e_{i,j})}\Delta_{i,j}y_{j+1}+(v^{-1}-v)v^{-1-(\vert y_{j-1}\vert,e_{i,j-2})}\Delta^v_{i,j-2}p_{j-1}z_{j+1}.
\end{align*}

It remains to verify part (d). Note that in each monomial in the dual PBW expansion of $\Delta^v_{i,j-1}$ either the variable $y_{j-1}$ or the variable $z_j$ occurs (depending on whether $a_{j-1}$ in Remark \ref{PBWExpansionOfDelta} is zero or one). The variable $y_{j+2}$ commutes with all other terms. Thus, we see that $\Delta^v_{i,j-1}y_{j+2}=v^{-1}y_{j+2}\Delta^v_{i,j-1}$. It follows that 
\begin{align*}
\Delta_{i,j}y_{j+2}&=y_j\Delta^v_{i,j-1}y_{j+2}-v^{A_{i,j}}z_{j+1}p_{j-2}\Delta^v_{i,j-3}y_{j+2}\\
&=v^{-1}y_jy_{j+2\Delta^v_{i,j-1}}-v^{-1+A_{i,j}}y_{j+2}z_{j+1}p_{j-2}\Delta^v_{i,j-3}=v^{-1}y_{j+2}\Delta_{i,j}.
\end{align*}

Now assume that $j$ is odd. Put $D=y_j\Delta^v_{i,j-1}-v^{A_{i,j}}z_{j+1}p_{j-2}\Delta^v_{i,j-3}$. We prove that $D=\Delta^v_{i,j}$. By the same arguments as above $D$ fulfills property (1) of the theorem. To conclude to $D=\Delta^v_{i,j}$, we have to verify property (2) of Theorem \ref{QuantRecursionForClusterVar}. We use parts (c) and (d) of the induction hypothesis for the pair $(i,j-1)$ to obtain the following equation:
\begin{align*}
D&=y_j\Delta^v_{i,j-1}-v^{A_{i,j}}z_{j+1}p_{j-2}\Delta^v_{i,j-3}\\
&=v^{-(\vert y_j\vert, e_{i,j-1})}\Delta^v_{i,j-1}y_j+v^{-1-(\vert y_{j-2}\vert,e_{i,j-3})}(v^{-1}-v)\Delta^v_{i,j-3}p_{j-2}z_{j+1}\\
&\quad-v^{A_{i,j}}v^{-(\vert y_{j-2}\vert,e_{i,j-3})-(\vert z_{j-2}\vert,e_{i,j-3})+(\vert z_{j-2}\vert,o_{i,j-4})}\Delta^v_{i,j-3}p_{j-2}z_{j+1}.
\end{align*}
Note that $(\vert z_{j-2}\vert,e_{i,j-3})=1$, and that $(\vert z_{j-2}\vert,o_{i,j-4})=0$ for $j-i \leq 3$ and $(\vert z_{j-2}\vert,o_{i,j-4})=1$ for $j-i \geq 4$. Hence, $A_{i,j}-(\vert z_{j-2}\vert,o_{i,j-4})(\vert z_{j-2}\vert,o_{i,j-4})=-2$ yielding
\begin{align*}
D&=v^{-(\vert y_j\vert, e_{i,j-1})}\Delta^v_{i,j-1}y_j-v^{-(\vert y_{j-2}\vert,e_{i,j-3})}\Delta^v_{i,j-3}p_{j-2}z_{j+1}.
\end{align*}
The equation $-A_{i,j}-(\vert y_j\vert,e_{i,j-3})-(\vert y_{j-1}\vert,o_{i,j-4})$ finishes the proof of the second equation of part (a). By Lemmas \ref{FirstLemma} and \ref{Lemma2} the last equation is equivalent to property (2). Hence, $D=\Delta^v_{i,j}$.

Now we prove that the new defined $\Delta^v_{i,j}$ satisfies property (b) of Theorem \ref{QuantRecursionForClusterVar}. First of all assume that $j<n$. We obtain: 
\begin{align*}
\Delta_{i,j}^vz_j&=v^{-(\vert y_j\vert, e_{i,j-1})}\Delta^v_{i,j-1}(p_j+v^{-1}z_{j-1}z_{j+1})\\&\quad-v^{-(\vert y_{j-2}\vert,e_{i,j-3})}\Delta^v_{i,j-3}p_{j-2}z_{j+1}z_{j-1}.\\
&=v^{-(\vert y_j\vert, e_{i,j-1})}\Delta^v_{i,j-1}p_j\\
&\quad+\Big(v^{-1-(\vert y_j\vert, e_{i,j-1})}\Delta^v_{i,j-1}z_{j-1}-v^{-1-(\vert y_{j-2}\vert,e_{i,j-3})}\Delta^v_{i,j-3}p_{j-2}z_{j-1}\Big)z_{j+1}\\
&=v^{-(\vert y_j\vert, e_{i,j-1})}\Delta^v_{i,j-1}p_j+v^{-1-(\vert y_j\vert, e_{i,j-1})}\Delta^v_{i,j-2}p_{j-1}z_{j+1}.
\end{align*}
Here we used he fact that $(\vert y_j\vert,e_{i,j-1})=1$ to adopt the induction hypothesis. For $j=n$ we have $(\vert y_j\vert,e_{i,j-1})=0$, but this defect is compensated by the relation $z_{j-1}z_{j+1}=z_{j+1}z_{j-1}$ (due to $z_{j+1}=1$) instead of $z_{j-1}z_{j+1}=v^{-1}z_{j+1}z_{j-1}$.

Now we prove that the new defined $\Delta^v_{i,j}$ satisfies property (c) of Theorem \ref{QuantRecursionForClusterVar}. By induction hypothesis we know that part (b) is true for the pair $(i,j-1)$. Note that $(\vert y_j\vert,e_{i,j-3})=(\vert y_j\vert,e_{i,j-1})$ and that $1=-A_{i,j}-(\vert y_{j-1}\vert,o_{i,j-4})$. We also use the fact that $z_{j+1}$ commutes with $\Delta^v_{i,j-3}$ since it commutes with every factor of every monomial in the PBW expansion of $\Delta^v_{i,j-3}$. Thus, we see that $y_{j+1}\Delta^v_{i,j}$ is equal to:
\begin{align*}
&v^{-(\vert y_j\vert,e_{i,j-1})}y_{j+1}\Delta^v_{i,j-1}y_j\\&\quad\quad\quad-v^{-A_{i,j}-(\vert y_j\vert + \vert y_{j-2}\vert,e_{i,j-3})-(\vert y_{j-1}\vert,o_{i,j-4})}y_{j+1}\Delta^v_{i,j-3}p_{j-2}z_{j+1}\\
&\quad\quad=v^{-(\vert y_j\vert,e_{i,j-3})+1}\Big[\Delta^v_{i,j-1}\Big(y_jy_{j+1}+(v-v^{-1})z_{j-1}z_{j+2}\Big)\\
&\quad\quad\quad\quad\quad\quad\quad\quad\quad-v^{1-(\vert y_{j-2}\vert,e_{i,j-3})}\Delta^v_{i,j-3}p_{j-2}\Big(z_{j+1}y_{j+1}+(v-v^{-1})z_jz_{j+2}\Big)\Big]\\
&\quad\quad=v\Delta^v_{i,j}y_{j+1}+v^{-(\vert y_j\vert,e_{i,j-3})+1}(v-v^{-1})\Big[\Delta^v_{i,j-1}z_{j-1}\\
&\quad\quad\quad\quad\quad\quad\quad\quad\quad\quad\quad\quad\quad\quad\quad\quad -v^{1-(\vert y_{j-2}\vert,e_{i,j-3})}\Delta^v_{i,j-3}p_{j-2}z_j\Big]z_{j+2}\\
&\quad\quad=v\Delta^v_{i,j}y_{j+1}+v^{-(\vert y_j\vert,e_{i,j-3})+1}(v-v^{-1})\Delta_{i,j-2}^vp_{j-1}z_{j+2}.
\end{align*}

Part (d) is proved similarly as in the case where $j$ is even.
\end{proof}

\begin{remark}
By symmetry there is also a recursion for every $\Delta_{i,j}^v$ (with $j-i\geq 2$) in terms of various $\Delta_{i',j}^v$ with $i'>i$.
\end{remark}

\textit{Quantum cluster algebras} were introduced by Berenstein-Zelevinsky \cite{BZ:QuantumClusterAlg}. We ready to conclude with our main theorem which establishes a quantum cluster algebra structure on the $\mathbb{Z}[v^{\pm \frac12}]$-algebra $\mathcal{A}_v(w)=\bigoplus_{{\textbf a} \in \mathbb{N}^{2n}}\mathbb{Z}[v^{\pm \frac12 }]E[{\textbf a}]^*$.

\begin{theorem}
The $\mathbb{Z}[v^{\pm \frac12}]$-algebra $\mathcal{A}_v(w)=\bigoplus_{{\textbf a} \in \mathbb{N}^{2n}}\mathbb{Z}[v^{\pm \frac12 }]E[{\textbf a}]^*$ is a quantum cluster algebra of type $A_n$. Every mutable quantum cluster variables is up to a power of $v$ an element in the dual canonical basis $\mathcal{B}^*$. The following properties hold:
\begin{itemize}
\item[(a)] The quantum cluster variables $v^{\frac12}z_i$ for $1 \leq i \leq n$ together with the frozen quantum cluster variables are $v^{\frac14(\vert y_i\vert+\vert z_i\vert,\vert y_i\vert+\vert z_i\vert)}p_i$ for $1 \leq i \leq n$ form an initial cluster of $A_v(w)$ whose $B$-matrix is encoded in the quiver from Figure \ref{fig:SnakeCluster}.  
\item[(b)] The remaining quantum cluster variables are $v^{\frac14 (s_{i,j},s_{i,j})}\Delta^v_{i,j}$ for $1 \leq i\leq j \leq n$.
\end{itemize}
\end{theorem}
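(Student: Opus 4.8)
The plan is to realize $\mathcal{A}_v(w)$ as the quantum cluster algebra attached to an explicit initial quantum seed in the sense of Berenstein--Zelevinsky \cite{BZ:QuantumClusterAlg}, whose underlying combinatorics is exactly the \emph{base seed} of $\mathcal{A}(\mathcal{C}_M)$ from the proof of Lemma \ref{ClusterVariables}. First I would fix the ambient quantum torus. By the dual straightening relations recorded after the definition of the $y_i,z_i$, together with Lemma \ref{PCommutation} and its Corollary, the $2n$ elements $v^{1/2}z_1,\dots,v^{1/2}z_n$ and $v^{\frac14(\vert y_i\vert+\vert z_i\vert,\vert y_i\vert+\vert z_i\vert)}p_i$ for $1\le i\le n$ pairwise $v$-commute; recording the commutation exponents defines a skew-symmetric $2n\times 2n$ matrix $\Lambda$, the half-integer normalizations being chosen so that all these relations take the form $XY=v^{\lambda}YX$ with $\lambda\in\mathbb{Z}$ and so that the resulting quantum cluster variables have the correct Berenstein--Zelevinsky normalization. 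Reading the exchange matrix $\tilde B$ off the quiver of Figure \ref{fig:SnakeCluster}, one then checks the compatibility condition $\tilde B^{\mathsf{T}}\Lambda=(D\mid 0)$ for a positive diagonal $D$; this is a finite verification on an $n\times 2n$ matrix product, using $(\alpha_i,\alpha_j)=a_{ij}$ and the explicit degrees of the $y_i,z_i,p_i$ displayed in Figure \ref{fig:AR}, and separate care at the boundary indices $i\in\{1,n\}$ where $z_0=z_{n+1}=1$.

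Next I would run the mutation sequences exactly as in the proof of Lemma \ref{ClusterVariables}: from the quantum base seed, mutate consecutively at $i,i+1,\dots,j$. The combinatorial shape of the quiver after each step is already determined there, since the underlying (non-quantized) cluster algebra is unchanged, so the only new content is (i) that the variable produced at each step is the claimed scalar multiple $v^{\frac14(s_{i,j},s_{i,j})}\Delta^v_{i,j}$ of a dual canonical basis element, and (ii) that $v$-commutativity of the cluster, hence compatibility of the mutated seed, is preserved along the sequence. For (i) the key input is the quantum exchange relation of Theorem \ref{QuantRecursionForClusterVar}(b), which already has the shape $x_kx_k'=M_++M_-$ of a Berenstein--Zelevinsky exchange relation with two monomial terms; one matches the exponents of $v$ in that relation, and inside the monomials $M_\pm$, with those prescribed by the quantum mutation rule applied to $(\tilde B,\Lambda)$. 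Parts (c) and (d) of Theorem \ref{QuantRecursionForClusterVar}, together with Lemma \ref{PCommutation} and its Corollary, supply the remaining quasi-commutation data showing that each mutated seed is again compatible. Once the recursively built element is known to be a $v$-power times an element of $\mathcal{B}^*$ with the correct leading term and $\sigma$-invariance (both furnished by Theorem \ref{QuantRecursionForClusterVar}(a)), the uniqueness statement in Theorem \ref{ExistenceOfDualCan} forces it to be $\Delta^v_{i,j}$ up to the indicated power of $v$.

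It then remains to identify the quantum cluster algebra so obtained with $\mathcal{A}_v(w)$. One inclusion is immediate: every quantum cluster variable is, up to a power of $v$, a dual canonical basis element, hence lies in $\mathcal{A}_v(w)=\bigoplus_{\mathbf a}\mathbb{Z}[v^{\pm1/2}]E[\mathbf a]^*$. For the reverse inclusion, mutating the quantum base seed at $v^{1/2}z_i$ produces $y_i$, by rewriting $p_i=y_iz_i-v^{-1}z_{i-1}z_{i+1}$ as an exchange relation; thus the quantum cluster algebra contains every $y_i$ and $z_i$, that is every $E^*(\beta_k)$ for $1\le k\le 2n$. By Proposition \ref{DualPBWFormula} each dual Poincar\'e--Birkhoff--Witt element $E[\mathbf a]^*$ is a monomial in the $E^*(\beta_k)$ times a power of $v$, so the $E^*(\beta_k)$ generate $\mathcal{A}_v(w)$ over $\mathbb{Z}[v^{\pm1/2}]$ --- the quantized counterpart of the acyclic lower-bound argument of Berenstein--Fomin--Zelevinsky recalled in Section \ref{DescriptionOfClusterVariables}. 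Finally, the $n+\tfrac{n(n+1)}2$ elements $z_i$ $(1\le i\le n)$ and $\Delta^v_{i,j}$ $(1\le i\le j\le n)$ are pairwise distinct, matching the number of mutable cluster variables of a cluster algebra of finite type $A_n$; hence the list is exhaustive and $\mathcal{A}_v(w)$ is of type $A_n$.

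The step I expect to be the main obstacle is the two compatibility checks: the identity $\tilde B^{\mathsf{T}}\Lambda=(D\mid 0)$ for the initial quantum seed, and the parallel check that the exponents of $v$ appearing in Theorem \ref{QuantRecursionForClusterVar}(b) coincide \emph{on the nose} with those dictated by the Berenstein--Zelevinsky mutation formula for $(\tilde B,\Lambda)$. In effect this is the point where the Lie-theoretic normalization --- powers of $v$ expressed through the bilinear form $(\cdot,\cdot)$ and the norm $N$ of Definition \ref{Norm} --- has to be reconciled with the purely combinatorial normalization built into the quantum mutation rule. It is only bookkeeping, but it is delicate, and it is most delicate precisely in the small cases and at the boundary indices where $z_0=z_{n+1}=1$ forces several of the exponents to degenerate.
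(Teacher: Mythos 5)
Your proposal follows essentially the same route as the paper: quantize the base seed of Lemma \ref{ClusterVariables} with the variables $v^{\frac12}z_i$ and $v^{\frac14(\vert y_i\vert+\vert z_i\vert,\vert y_i\vert+\vert z_i\vert)}p_i$, verify compatibility of the resulting pair $(\tilde B,\Lambda)$ via the straightening relations and Lemma \ref{PCommutation}, and then identify the variables produced by the mutation sequences $i,i+1,\dots,j$ with $v^{\frac14(s_{i,j},s_{i,j})}\Delta^v_{i,j}$ by matching the Berenstein--Zelevinsky exchange monomials against Theorem \ref{QuantRecursionForClusterVar}(b). Your explicit generation argument for the reverse inclusion (mutation at $z_i$ yields $y_i$, and the $E^*(\beta_k)$ generate $\mathcal{A}_v(w)$ by Proposition \ref{DualPBWFormula}) is a welcome elaboration of a point the paper leaves implicit, but it does not change the approach.
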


Note that $\frac12(s_{i,j},s_{i,j})=j-i+1+(e_{i,j-1},o_{i,j-1})\in\mathbb{Z}$.

\begin{proof}
We quantize the proof of Lemma \ref{ClusterVariables}. We construct a seed of a quantum cluster algebra similar to the base seed in Figure (\ref{fig:SnakeCluster}). The cluster variables $Z_i,P_i$ (for $1 \leq i \leq n$) are replaced by the quantum cluster variables $v^{\frac12}z_i, v^{\frac14(\vert y_i\vert+\vert z_i\vert,\vert y_i\vert+\vert z_i\vert)}p_i$ (for $1 \leq i \leq n$). Notably, every pair of quantum cluster variables in the base seed forms a quantum torus, i.e., it satisfies a $v$-commutativity relation. (The $v$-commutativity relations among the $z_i$ follow from the straigthening relations; the $v$-commutativity relations between the $P_i$ and the $z_i$ follow from Lemma \ref{PCommutation}; the $v$-commutativity relations among the $P_i$ can be checked using the straightening relations.) These relations are strict commutativity relations except for the following $v$-commutativity relations:
\begin{align*}
&z_iz_j=v^{-(\vert z_i\vert,\vert z_j \vert)}z_jz_i, &&i \ {\rm even}, j \ {\rm odd},\\
&z_ip_j=v^{(\vert z_i\vert,\vert y_j \vert)}p_jz_i, &&i,j \ {\rm even},\\
&z_ip_j=v^{-(\vert z_i\vert,\vert y_j \vert)}p_jz_i, &&i,j \ {\rm odd},\\
&p_ip_j=v^{-(\vert z_i\vert,\vert z_j \vert)+(\vert z_i\vert,\vert y_j \vert)+(\vert y_i\vert,\vert z_j \vert)+(\vert y_i\vert,\vert y_j \vert)}p_jz_i, &&i \ {\rm even}, j \ {\rm odd}.\\
&p_ip_j=v^{-(\vert z_i\vert,\vert y_j \vert)+(\vert y_i\vert,\vert z_j \vert)}p_jz_i, &&i,j \ {\rm even},\\
&p_ip_j=v^{(\vert z_i\vert,\vert y_j \vert)-(\vert y_i\vert,\vert z_j \vert)}p_jz_i, &&i,j \ {\rm odd}.
\end{align*}
Now it is easy to see that the $B$-matrix induced from the quiver in Figure \ref{fig:SnakeCluster} and the $\Lambda$-matrix induced from the $v$-commutativity relations form a {\it compatible pair} in the sense of Berenstein-Zelevinsky \cite[Definition 3.1]{BZ:QuantumClusterAlg}. Hence, the base seed is a valid initial quantum cluster.

Now fix an integer $i$ such that $1 \leq i \leq n$. Beginning with the base we perform mutations at vertices $i,i+1,\ldots,j$, consecutively, as in the proof of Lemma \ref{ClusterVariables}. We prove by induction on $j$ that the new quantum cluster variable $X$ that occurs after the sequence of mutations from above is equal to $\Delta^v_{i,j}$. The case $i=j$ is trivial. Note that part (c) of Theorem \ref{QuantRecursionForClusterVar} makes also sense for $j=i+1$. We distinguish two cases. First of all, assume that $j$ is even. By Berenstein-Zelevinsky \cite[Proposition 4.9]{BZ:QuantumClusterAlg} we have $$X=M'+M''$$ where definition of $M'$ and $M''$ involves $v$-commutativity relations among the quantum cluster variables in the previous seed. To describe the variable $M'$ we compute
$
\Delta^v_{i,j-1}p_jz_j^{-1}=v^{-(\vert y_j\vert,o_{i,j-1})}z_j^{-1}p_j\Delta^v_{i,j-1}.
$
Therefore, the summand $M'$ is given by the following equation:
\begin{align*}
M'&=v^{\frac12 (\vert y_j\vert,o_{i,j-1})}\cdot v^{\frac12(j-i)+\frac12(e_{i,j-2},o_{i,j-1})}\Delta^v_{i,j-1}\cdot vp_j \cdot v^{-\frac12}z_j^{-1}\\
&=v^{\frac12(j-i+1)+\frac12(e_{i,j},o_{i,j-1})}\Delta^v_{i,j-1}p_jz_j^{-1}.
\end{align*}
Furthermore, to describe $M''$ we obtain:
\begin{align*}
\Delta^v_{i,j-2}p_{j-1}z_{j+1}z_j^{-1}&=v^{-(\vert z_j\vert,\vert z_{j+1}\vert)-(\vert z_j\vert,\vert z_{j-1}\vert)+(\vert z_j\vert,\vert y_{j-1}\vert)-(\vert z_{j-1}\vert,\vert y_{j-1}\vert)}\\
&\quad \cdot v^{(e_{i,j-2},\vert y_{j-1}\vert+\vert z_{j-1}\vert-\vert z_j\vert)}\cdot v^{(o_{i,j-3},-\vert y_{j-1}\vert-\vert z_{j-1}\vert+\vert z_j\vert)}\\
&\quad \cdot z_j^{-1}z_{j+1}p_{j-1}\Delta^v_{i,j-2}\\
&=v^{-2+(e_{i,j-2},\vert y_{j-1}\vert)-(o_{i,j-3},\vert y_j\vert)}z_j^{-1}z_{j+1}p_{j-1}\Delta^v_{i,j-2}
\end{align*}
Therefore, the summand $M'$ is given by the following equation:
\begin{align*}
M''&=v^{1-\frac12 (e_{i,j-2},\vert y_{j-1}\vert)+\frac12(o_{i,j-3},\vert y_j\vert)} \cdot v^{\frac12(j-i-1)+\frac12(e_{i,j-2},o_{i,j-3})}\Delta^v_{i,j-2}\\
& \quad \cdot v^{1-\frac12(\vert y_j\vert,\vert y_{j-1}\vert)}\cdot v^{\frac12}z_{j+1} \cdot v^{-\frac12}z_j^{-1}\\
&=v^{1-(\vert y_{j-1}\vert,e_{i,j-2})}v^{\frac12(j-i+1)+\frac12(e_{i,j},o_{i,j-1})}\Delta^v_{i,j-2}p_{j-1}z_{j+1}z_j^{-1}.
\end{align*}
A comparison with the formula in part (c) of Theorem \ref{QuantRecursionForClusterVar} shows that $X=\Delta^v_{i,j}$ which completes the induction step.

The case with odd $j$ is treated similarly.
\end{proof}

\begin{remark}
Similarly, by adjusting the bilinear form we can equip the algebra $U_v^+(w')$ attached to $Q'$ with the structure of a quantum cluster algebra. 
\end{remark}

\begin{remark}
Note that ${\rm deg}(\Delta_{i,j})=\underline{{\rm dim}}(M_{i,j})=s_{i,j}$ for all $1<i<j<n$. Thus, the object $M_{i,j}$ from Remark \ref{Rohleder} is the indecomposable rigid object in $\mathcal{A}(\mathcal{C}_M)$ corresponding to the cluster variable $\Delta_{i,j}$. The corresponding quantum cluster variable is the dual canonical basis element $\Delta^v_{i,j}$ scaled by a factor $v^{\frac14 (s_{i,j},s_{i,j})}$. By \cite[Lemma 3.12]{GLS:Rigid} the exponent can be interpreted as $\frac14 (s_{i,j},s_{i,j})=\frac12{\rm dim}({\rm End}(M_{i,j}))$. The same relation is true for the other (mutable and frozen) quantum cluster variables. 
\end{remark}

\subsection*{Acknowledgements}
This paper is a part of my Ph.D. studies at the University of Bonn supervised by Jan Schr\"oer whom I would like to thank for many productive discussions and steady encouragement. I am also grateful to Bernard Leclerc for valuable explanations. I would like to thank an anonymous referee for various comments and remarks.

\end{document}